\newtheorem{theorem}{Theorem}[section]
\newtheorem{corollary}[theorem]{Corollary}
\newtheorem{lemma}[theorem]{Lemma}
\theoremstyle{definition}
\newtheorem{definition}[theorem]{Definition}
\theoremstyle{definition}
\newtheorem*{definition*}{Notational Conventions}
\theoremstyle{remark}
\newtheorem{remark}[theorem]{Remark}
\theoremstyle{definition}
\newtheorem{example}[theorem]{Example}
\title{Cocycles in Local Higher Category Theory}
\author{Nicholas J. Meadows}
\begin{document}

\maketitle

\section*{Introduction}

In \cite[Theorem 6.5]{local}, it is shown that given a right proper model category $M$, whose weak equivalences are closed under finite products, the maps $[X, Y]_{M}$ in the homotopy category of $M$ can be described as the path components of a $\textbf{cocycle category}$ $h(X, Y)_{M}$. Its objects are diagrams 
$$
X \xleftarrow{f} A \xrightarrow{g} Y
$$
 in $M$ with $f$ a weak equivalence and its morphisms are commutative diagrams 

$$
\xymatrix
{
& A \ar[dl]_{f} \ar[dd] \ar[dr]^{g} & \\
X & & Y \\
& A' \ar[ul]^{f'} \ar[ur]_{g'} &
}
$$

If $G$ is a sheaf of groups on a site $\mathscr{C}$, then a \textbf{$G$-torsor} is traditionally defined to be a sheaf $F$ with a principal and transitive $G$-action. This is equivalent to the fact that for the classical Borel construction $EG \times_{G} F$ the unique map
$$
EG \times_{G} F \rightarrow *
$$
is a local weak equivalence. 
Thus, every $G$-torsor determines a cocycle
$$
* \leftarrow EG \times_{G} F \rightarrow BG.
$$
This map induces a bijection
$$
\pi_{0}(\textbf{Tors}_{G}) \rightarrow \pi_{0} h(*, B(G))_{s\textbf{Pre}(\mathscr{C})}
$$
between path components of the category of $G$-torsors and path components of the cocycle category $h(*, B(G))_{s\textbf{Pre}(\mathscr{C})}$, leading to the homotopy classification of torsors (i.e. the homotopy theoretic interpretation of non-abelian $H^{1}$). \index{cohomology!non-abelian}

The technique of cocycles has numerous other applications, which are described in \cite[Chapter 9]{local} and \cite{Jardine-Cocycles}. These include homotopy classification of gerbes (non-abelian $H^{2}$) and an explicit model for stack completion.

The ultimate purpose of this paper is to show that cocycle-theoretic techniques apply to local higher category theory in the sense of \cite{Nick} and \cite{Nick2}. However, the problem is that neither the Joyal or the complete Segal model structures (and by extension their local analogues) are right proper. In fact, the only right proper model of higher categories is Bergner's model structure on simplicial categories. A major goal of this paper is to prove the existence of a local version of Bergner's model structure on simplicial categories and show that it is Quillen equivalent to the local Joyal model structure.

As an application of the theory of cocycles, we will prove a generalization of the homotopy classification of torsors. In particular, given an arbitrary presheaf of Kan complexes $X$, we will describe a bijection between the path components of a category of torsors and the maps
$$
[*, X]_{s\textbf{Pre}(\mathscr{C})}
$$
in the homotopy category of the injective model structure (\ref{cor5.13}).

The first two sections of the paper are devoted, respectively, to reviewing some basic aspects of local homotopy theory and model structures on simplicial categories (i.e. the Bergner model structure).   

In the third section, we define an appropriate local analogue of weak equivalences for the Bergner model structure. In the fourth section, we prove some auxiliary results related to Boolean localization and local fibrations. In the the fifth section, we will prove the existence of a local analogue of the Bergner model structure, and show that it is Quillen equivalent to the local Joyal model structure.

In the final section, the technique of cocycles is applied to describe the maps $[*, X]$ in the homotopy category of the local Bergner model structure, in the case that $\pi_{0}(X)$ is a presheaf of groupoids.

\section*{Basic Notational and Terminological Conventions}

For each $n \in \mathbb{N}$, write $[\textbf{n}]$  for the ordinal number category with $n+1$ objects $\{ 0, 1, \cdots, n-1, n \}$. We write $\mathrm{Set}, \mathrm{sSet}$ for the categories of sets and simplicial sets, respectively.

We will write $\mathrm{hom}_{C}(x, y)$ for the set of morphisms between two objects $x, y$ of a category $C$. Oftentimes, we will omit the subscript, because the category is obvious from the context. Given a small category $C$, we will write $\mathrm{Iso}(C)$ for the subcategory of $C$ whose objects are the objects of $C$ and whose morphisms are the isomorphisms of $C$. Given a small category $C$, we write $\mathrm{Mor}(C)$ and $\mathrm{Ob}(C)$ for the set of morphisms and objects of $C$, respectively. In the internal description of the category, the source, target, identity, and composition maps are respectively denoted $s, t, ident, c$.

Given a category $C$ enriched over simplicial sets, we will write $\mathrm{Ob}(C)$ and $\mathrm{Mor}(C)$ for the set of objects and the simplicial sets of morphisms, respectively. The source, target, identity, and composition maps are respectively denoted $s, t, ident, c$ in the internal description of the simplicially enriched category.

Given a small category $C$, write $B(C)$ for the nerve of $C$. Given two simplicial sets $K, Y$, write $X^{K}$ for the simplicial set whose $n$-simplices are maps $\mathrm{hom}(K \times \Delta^{n}, Y)$. The nerve functor has a left adjoint, which we denote 
$$
P : \mathrm{sSet} \rightarrow \mathrm{Cat}.
$$
Given a simplicial set $X$, we call $P(X)$ the \textbf{path category} of $X$. We write 
$$\pi(X) = P(X)[P(X)^{-1}].$$
This object is called the \textbf{fundamental groupoid} of $X$.

\section{Background on Local Homotopy Theory}

Throughout the paper, we will fix a small Grothendieck site $\mathscr{C}$. Thus, $s\textbf{Pre}(\mathscr{C})$ and $s\textbf{Sh}(\mathscr{C})$ are the simplicial presheaves and simplicial sheaves on $\mathscr{C}$, respectively. Throughout the paper, we fix a Boolean localization (see below) $p : s\textbf{Sh}(\mathscr{C}) \rightarrow s\textbf{Sh}(\mathscr{B})$.

Central to this paper is the use of the \textbf{injective model structure} on $s\textbf{Pre}(\mathscr{C})$, in which the weak equivalences are 'stalkwise weak equivalences' and cofibrations are monomorphisms (c.f. \cite[pg. 63-64]{local}). We will call its weak equivalences \textbf{local weak equivalences} and its fibrations \textbf{injective fibrations}. The proof of the existence of this model structure is found in \cite[Chapters 4 and 5]{local}. 

Recall that the \textbf{Joyal model structure} on simplicial sets is the unique model structure on $\mathrm{sSet}$ such that
\begin{enumerate}
\item{the cofibrations are monomorphisms.}
\item{the fibrant objects are quasi-categories.}
\end{enumerate}
The existence of the Joyal model structure is asserted in \cite[Theorem 2.2.5.1]{Lurie} and \cite[Theorem 6.12]{Joyal-quasi-cat}. We call its weak equivalences \textbf{Joyal equivalences} and its fibrations \textbf{quasi-fibrations}. A brief overview of some of its properties, useful for our purposes, is found in \cite{Nick}.

Recall from \cite[Theorem 3.3]{Nick} that there is a model structure, called the \textbf{local Joyal model structure}, on $s\textbf{Pre}(\mathscr{C})$ in which the weak equivalences are `stalkwise' Joyal equivalences. We call its weak equivalences \textbf{local Joyal equivalences} and its fibrations \textbf{quasi-injective fibrations}.

The technique of \textbf{Boolean localization} is used extensively thoughout the paper. A Boolean localization is a cover of a Grothendieck topos by the topos of sheaves on a complete Boolean algebra. The relevant properties of Boolean localization can be found in \cite[Section 2]{Nick}. However, a more comprehensive exposition is found in \cite[Chapter 3]{local}.

Suppose that $X, Y: \mathscr{C}^{op} \rightarrow C$ are functors, $f: X \rightarrow Y$ is a natural transformation and $i : K \rightarrow L$ is a map in $C$. We say that $f$ has \textbf{local right lifting property} with respect to $i$ if for every commutative diagram
$$
\xymatrix
{
 K \ar[r] \ar[d] & X(U) \ar[d] \\
L \ar[r] & Y(U)  
}
$$
with $U \in \mathrm{Ob}(\mathscr{C})$, there is exists a covering sieve $R \subseteq hom(-, U)$, such that the lift exists in the diagram
$$
\xymatrix
{
K \ar[r] \ar[d] & X(U) \ar[r]^{X(\phi)} & X(V) \ar[d]\\
L \ar[r] \ar@{.>}[urr] & Y(U) \ar[r]_{Y(\phi)} & Y(V)
}
$$
for each $\phi \in R$.

This leads to the various notion of local fibrations (e.g. local Kan fibrations), which are highly useful in local homotopy theory. The facts we need about local fibrations are summarized in \cite[Section 2]{Nick}. However, a more comprehensive exposition is found in \cite[Chapter 4]{local}. 

\section{The Dwyer-Kan and Bergner Model Structure}

We call a small category enriched in simplicial sets a \textbf{simplicial category}. We will denote the category of simplicial categories by $\mathrm{sCat}$.

Let $\mathcal{O}$ be a set. Write $\mathrm{sCat}_{\mathcal{O}}$ for the subcategory of $\mathrm{sCat}$ whose objects are simplicial categories $C$ with object set $\mathcal{O}$ and whose morphisms are the identity on objects.

In \cite{localization} it was proven that there was a proper model structure on  $\mathrm{sCat}_{\mathcal{O}}$ whose fibrations are maps which induce Kan fibrations of simplicial homs and whose weak equivalences are maps that induce weak equivalences of simplicial homs. We call this the \textbf{Dwyer-Kan model structure}.

Given a category $C$, we denote by $F(C)$ the free category with one generator for each non-identity map of $C$. There are maps $F^{2}(C) \rightarrow F(C)$ and $C \rightarrow F(C)$, which define a simplicial object $F_{*}(C)$ called the \textbf{simplicial resolution} of $C$ (c.f. \cite[Definition 2.5]{localization}).

Given a simplicial category $C$, we write $\mathrm{DK}(C)$ for the diagonal of the simplicial resolution of $C$. There is a natural weak equivalence $\mathrm{DK}(C) \rightarrow C$ which is in fact a cofibrant replacement, since the cofibrant objects in the Dwyer-Kan model structure are exactly the retracts of free objects (\cite[Proposition 7.6]{localization}).  
\\

Given a simplicial category $C$, one can construct a category $\pi_{0}(C)$, whose objects are the objects of $C$ and which satisfies $\mathrm{hom}_{\pi_{0}(C)}(x, y) = \pi_{0}\mathrm{hom}_{C}(x, y)$. A map $f \in \mathrm{hom}_{\mathcal{C}}(x, y)_{0}$ is called an \textbf{equivalence} if and only if it is an isomorphism in $\pi_{0}(C)$.

In \cite{BERGNER1}, Bergner constructs a right proper model category on the category of simplicial categories with the following properties.

\begin{enumerate}
\item{The weak equivalences (\textbf{sCat-equivalences}) are those maps $f: C \rightarrow D$ such that \begin{enumerate} \item{$\mathrm{hom}_{C}(x, y) \rightarrow \mathrm{hom}_{D}(f(x), f(y))$ are weak equivalences for all $ x, y \in C$.} 

\item{ $\pi_{0}(C) \rightarrow \pi_{0}(D)$ is essentially surjective.}
\end{enumerate}

}
\item{The fibrations (\textbf{sCat-fibrations}) are maps $f: C \rightarrow D$ such that
\begin{enumerate}
\item{$\mathrm{hom}_{C}(x, y) \rightarrow \mathrm{hom}_{D}(f(x), f(y))$ are Kan fibrations for all $ x, y \in C$.}
\item{Any equivalence $f(x) \rightarrow y$ in $D$ lifts to an equivalence $x \rightarrow z$ in $C$.}
\end{enumerate}
\item{The cofibrations are those maps which have the left lifting property with respect to maps which are both fibrations and weak equivalences.}
}
\end{enumerate}

The sCat-equivalences are referred to as DK-equivalences in \cite{BERGNER1}. 
\\

Condition b) in the definition of sCat-fibration can be replaced with the following condition 
\begin{enumerate}[label=(b')]
\item{$\pi_{0}(f)$ is an isofibration.}
\end{enumerate}

\begin{definition}\label{def1.1}

There is a functor $\mathcal{U} : \mathrm{sSet} \rightarrow \mathrm{sCat}$ such that $\mathcal{U}(S)$ is a simplicial category which has two objects x, y, $\mathrm{hom}_{\mathcal{U}(S)}(x, y) = S$ and $\mathcal{U}(S)$ has no other non-identity morphisms. 
\end{definition}

The Bergner model structure is cofibrantly generated with generating cofibrations
\begin{enumerate}
\item{$\mathcal{U}(\partial \Delta^{n}) \rightarrow \mathcal{U}(\Delta^{n})$ for $n \in \mathbb{N}$.}
\item{$\emptyset \rightarrow *$.}
\end{enumerate}

and generating trivial cofibrations
\begin{enumerate}
\item{The inclusions $\mathcal{U}(\Lambda_{i}^{n}) \rightarrow \mathcal{U}(\Delta^{n})$.}
\item{The inclusion maps $* \rightarrow \mathcal{H}$, where $\mathcal{H}$ runs over the set of isomorphism classes of simplicial categories with the following properties
\begin{enumerate}
\item{$\mathrm{Ob}(\mathcal{H}) = \{ x, y\}$.}
\item{The simplicial sets $\mathrm{hom}_{\mathcal{H}}(x, y), \mathrm{hom}_{\mathcal{H}}(x, x), \mathrm{hom}_{\mathcal{H}}(y, y)$ and $\mathrm{hom}_{\mathcal{H}}(y, x)$ are weakly contractible and have countably many non-degenerate simplices.}
\item{$* \xrightarrow{x} \mathcal{H}$ is a cofibration.}
\end{enumerate}
}
\end{enumerate}

\begin{lemma}\label{lem1.3}
The cofibrations in the Bergner model structure are monomorphisms. 
\end{lemma}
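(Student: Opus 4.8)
The plan is to exhibit the monomorphisms as a class that contains the generating cofibrations and is closed under the operations out of which every cofibration is built, so that the inclusion is forced. Concretely, let $K$ denote the class of maps $f \colon C \to D$ in $\mathrm{sCat}$ that are injective on objects and such that each $\mathrm{hom}_{C}(x,y) \to \mathrm{hom}_{D}(f(x),f(y))$ is a (levelwise) monomorphism of simplicial sets. A routine diagram chase shows that every map in $K$ is a monomorphism in $\mathrm{sCat}$: injectivity on objects recovers the action on objects, and levelwise faithfulness recovers the action on morphisms. Hence it suffices to prove $\mathrm{cof}(I) \subseteq K$, where $I$ is the set of generating cofibrations listed above. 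Since the Bergner structure is cofibrantly generated, every cofibration is a retract of a transfinite composite of pushouts of coproducts of maps in $I$; thus I only need to check that $K$ contains $I$ and is closed under coproducts, under the pushouts of coproducts of generators occurring in cell complexes, under transfinite composition, and under retracts.

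Membership of the generators in $K$ is immediate: $\emptyset \to *$ is vacuously injective on objects, and each $\mathcal{U}(\partial\Delta^{n}) \to \mathcal{U}(\Delta^{n})$ is the identity on the object set $\{x,y\}$ and is the monomorphism $\partial\Delta^{n} \hookrightarrow \Delta^{n}$ on $\mathrm{hom}(x,y)$, all other homs being empty or trivial. Closure under retracts is formal, and closure under coproducts is clear since the coproduct in $\mathrm{sCat}$ is disjoint union. For transfinite composition I would use that a simplicial category is the same thing as a simplicial object in $\mathrm{Cat}$ whose structure maps are bijective on objects, so that a directed colimit in $\mathrm{sCat}$ is computed degreewise; a directed colimit in $\mathrm{Cat}$ is in turn computed by the directed colimits of objects and of morphisms separately, because the functors $\mathrm{Ob}$ and $\mathrm{Mor}$ preserve filtered colimits. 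Since a directed colimit of injections of sets is injective, injectivity on objects and levelwise faithfulness are preserved along a chain, giving closure under transfinite composition.

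The crux is closure under the pushouts of (coproducts of) generators, which I would reduce to a statement in $\mathrm{Cat}$. Pushing out a coproduct of generators along an attaching map with target $C$ adjoins, on the one hand, the disjoint objects contributed by the $\emptyset \to *$ factors and, on the other, new morphisms between existing objects of $C$ contributed by the $\mathcal{U}$-factors; in particular the resulting $P$ is the disjoint union of the new objects with a simplicial category built from $C$ by adjoining arrows, so injectivity on objects is clear. Because the object set is then fixed, this latter pushout is computed degreewise in $\mathrm{Cat}$, and in each simplicial degree $m$ it amounts to freely adjoining to the category $C_{m}$ a set of new parallel arrows with prescribed (possibly coinciding) sources and targets among the objects of $C_{m}$, subject to no new relations. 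The main obstacle is the combinatorial fact that the canonical functor $C_{m} \to P_{m}$ arising from such a free adjunction of arrows is faithful. I would establish this by giving a normal form for morphisms of $P_{m}$ as reduced alternating words in the non-identity morphisms of $C_{m}$ and the newly adjoined generators, with composition given by concatenation followed by composing adjacent $C_{m}$-segments, and then observing that a morphism of $C_{m}$ is sent to the length-one word it represents, so distinct morphisms of $C_{m}$ remain distinct in $P_{m}$. This yields levelwise faithfulness, whence $C \to P$ lies in $K$. Combining the four closure properties with the cellular description of cofibrations gives $\mathrm{cof}(I) \subseteq K \subseteq \mathrm{Mono}$, which is the assertion of the lemma.
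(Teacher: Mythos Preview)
Your proof is correct and follows essentially the same route as the paper: both reduce the question to the single nontrivial closure property, namely that pushing out $\mathcal{U}(\partial\Delta^{n})\to\mathcal{U}(\Delta^{n})$ along an attaching map yields a faithful functor in each simplicial degree, and both establish that faithfulness by a normal-form argument for the category obtained by freely adjoining arrows. The only cosmetic difference is that the paper computes the pushout via the path-category/nerve adjunction (writing morphisms of $P(X)$ as strings of $1$-simplices and observing the new simplex cannot be cancelled), whereas you describe the same normal forms directly as reduced alternating words; your treatment of the remaining closure properties is simply more explicit than the paper's.
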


\begin{proof}
The non-trivial part is to show that a pushout of $i_{n} : \mathcal{U}(\partial \Delta^{n}) \rightarrow \mathcal{U}(\Delta^{n})$ is a monomorphism. Such a pushout is obtained in each simplicial degree by adding morphisms, so it suffices to show that the right hand map in the pushout
$$
\xymatrix
{
\{0, 1 \} \ar[d]_{(x, y)} \ar[r] & C \ar[d]_{g} \\
[\textbf{1}] \ar[r] & D
}
$$
is a monomorphism.

Consider the pushout
$$
\xymatrix
{
\partial \Delta^{1} \ar[d] \ar[r] & BC \ar[d]_{h} \\
\Delta^{1} \ar[r] & X
}
$$
Since $P$ preserves pushouts, $PBC \rightarrow PX$ is naturally isomorphic to $C \rightarrow D$. Note that $BC \rightarrow X$ is a monomorphism, so that $C \rightarrow D$ is a monomorphism on objects.
$X$ is obtained by adjoining a 1-simplex $\alpha$ to $BC$. 
Now, the morphism of $PX$ can be represented by strings of 1-simplices modulo an equivalence relation. Note that if $\alpha$ appears in a string 
$$
a_{0} \rightarrow \cdots a_{i} \rightarrow x \xrightarrow{\alpha} y \cdots a_{n},
$$
then no composition relation of $C$ can remove it. Thus, if $y_{1}, y_{2}$ are morphisms in $C$ and $y_{1} \simeq y_{2}$, then they must be equivalent by some composition laws in $C$, so that $y_{1} = y_{2}$.  

\end{proof}

\begin{definition}\label{def1.4}
Suppose that $\Omega^{*}$ is a cosimplicial object of a category $C$. Then there is a pair of adjoint functors associated to $\Omega^{*}$
$$
 |\, \, |_{\Omega^{*}} : sSet \leftrightarrows C : Sing_{\Omega^{*}}.   
$$
The left adjoint is given by  
$$
|S|_{\Omega^{*}} = \underset{\Delta^{n} \rightarrow S}{\underset{\longrightarrow}{lim}} \Omega^{n}
$$
and the right adjoint is given by $Sing_{\Omega^{*}}(S)_{n} = hom(\Omega^{n}, S)$. The right adjoint is known as a \textbf{singular functor associated to $\Omega^{*}$}.

\end{definition}

For each $n \in \mathbb{N}$ there is a simplicial category $\Phi^{n}$ such that
\begin{enumerate}
\item{
The objects $\Phi^{n}$ are the objects in the set $\{ 0, 1 \cdots, n\}$.}
\item{ $\mathrm{hom}_{\Phi^{n}}(i, j)$ can be identified with the nerve of the poset $\mathcal{P}_{n}[i, j]$ of subsets of the interval $[i, j]$ which contains the endpoints. That is, $\mathrm{hom}_{\Phi^{n}}(i, j) \cong (\Delta^{1})^{i-j-1}$.} 
\item{Composition is induced by union of posets.}
\end{enumerate}
These $\Phi^{n}$ glue together to give a cosimplicial object $\Phi$. The singular functor associated to $\Phi$ is called the \textbf{homotopy coherent nerve}, and is denoted $\mathfrak{B}$. We write $\mathfrak{C}$ for its left adjoint. 

The homotopy coherent nerve is significant because of the following theorem. 

\begin{theorem}\label{thm1.5}
There is a Quillen equivalence
$$
\mathfrak{C} : \mathrm{sSet} \leftrightarrows \mathrm{sCat} : \mathfrak{B}
$$
between the Joyal model structure and the Bergner model structure. 
\end{theorem}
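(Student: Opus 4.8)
The statement is the comparison theorem of Joyal and Lurie, and the plan is to isolate the one genuinely technical ingredient --- a comparison of mapping spaces --- from which both halves of the assertion follow; the remaining ingredients are the explicit computation of $\mathfrak{C}$ on simplices together with formal model-category bookkeeping. Throughout I use that every simplicial set is cofibrant in the Joyal structure (cofibrations being the monomorphisms), that a fibrant object of $\mathrm{sCat}$ is a simplicial category whose hom-objects are Kan complexes satisfying the equivalence-lifting condition (b'), and that $\mathfrak{C}(\Delta^n) = \Phi^n$.

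First I would record the two elementary computations. Since $\mathfrak{C}$ is a left adjoint it preserves colimits, so to see that it preserves cofibrations it suffices to treat the generating monomorphisms $\partial \Delta^n \hookrightarrow \Delta^n$. A direct inspection identifies $\mathfrak{C}(\partial \Delta^n) \to \mathfrak{C}(\Delta^n) = \Phi^n$ as the identity on objects and on every hom-object except $\mathrm{hom}(0,n)$, where it is the cube-boundary inclusion $\partial(\Delta^1)^{n-1} \hookrightarrow (\Delta^1)^{n-1}$. The latter is a relative cell complex on the maps $\partial \Delta^m \hookrightarrow \Delta^m$, and attaching such a hom-cell is a pushout of a generating Bergner cofibration $\mathcal{U}(\partial \Delta^m) \to \mathcal{U}(\Delta^m)$; hence $\mathfrak{C}$ preserves cofibrations. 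Dually, for an inner horn ($0 < i < n$) the map $\mathfrak{C}(\Lambda^n_i) \to \mathfrak{C}(\Delta^n)$ is bijective on objects and is a trivial cofibration of the Dwyer--Kan structure on $\mathrm{sCat}_{\{0,\dots,n\}}$; by adjunction, a fibrant $Y$ (whose homs are Kan complexes) then has the right lifting property against all inner horns, so $\mathfrak{B}(Y)$ is a quasi-category.

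The crux is the following mapping-space comparison, which I regard as the main obstacle: for every simplicial set $S$ and every pair of vertices $x,y$, the canonical map
$$
\mathrm{hom}_{\mathfrak{C}(S)}(x,y) \longrightarrow \mathrm{Map}_S(x,y)
$$
to the mapping space of $S$ is a weak equivalence, and there is a natural isomorphism $\pi_0 \mathfrak{C}(S) \cong P(S)$. The identification on path components is formal, since $S \mapsto \pi_0 \mathfrak{C}(S)$ and $S \mapsto P(S)$ are both colimit-preserving and agree on representables, sending $\Delta^n$ to $[\mathbf{n}]$. The equivalence of mapping spaces is the hard combinatorial input; I would prove it by skeletal induction on $S$, using the cube description of the hom-objects of $\Phi^n$ (equivalently, the necklace presentation of $\mathfrak{C}(S)$) to exhibit $\mathrm{hom}_{\mathfrak{C}(S)}(x,y)$ as a cofibrant simplicial set built by attaching the very cube-boundary inclusions $\partial(\Delta^1)^m \hookrightarrow (\Delta^1)^m$ that appeared above, and then matching this filtration cell by cell with the corresponding filtration of $\mathrm{Map}_S(x,y)$.

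Granting the comparison, the theorem follows formally. A Joyal equivalence is exactly a map inducing weak equivalences of all derived mapping spaces and an equivalence of homotopy categories; by the comparison these are precisely what $\mathrm{hom}_{\mathfrak{C}(-)}$ and $\pi_0 \mathfrak{C}(-) \cong P(-)$ compute, so $\mathfrak{C}$ both preserves and reflects weak equivalences. Preservation, together with the fact that $\mathfrak{C}$ preserves cofibrations and that every simplicial set is cofibrant, shows that $\mathfrak{C}$ is a left Quillen functor, so $(\mathfrak{C},\mathfrak{B})$ is a Quillen adjunction. For the equivalence I would apply the standard criterion: it suffices that $\mathfrak{C}$ reflect weak equivalences between cofibrant objects (just established) and that, for every fibrant $Y$, the counit $\mathfrak{C}\mathfrak{B}(Y) \to Y$ be an sCat-equivalence. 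For the counit, $\mathfrak{B}(Y)$ is a quasi-category by the second computation above, its mapping space between $x$ and $y$ is naturally equivalent to $\mathrm{hom}_Y(x,y)$ (a standard property of the homotopy coherent nerve of a locally Kan simplicial category), and the comparison then identifies the map $\mathrm{hom}_{\mathfrak{C}\mathfrak{B}(Y)}(x,y) \to \mathrm{hom}_Y(x,y)$ induced by the counit with a weak equivalence; essential surjectivity on $\pi_0$ is handled the same way. This completes the verification.
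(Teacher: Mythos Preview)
Your sketch is a reasonable outline of the standard proof, but you should be aware that the paper does not prove this theorem at all: it simply records it as a known result and cites \cite[Theorem 2.2.5.1]{Lurie}, \cite[Corollary 8.2]{MappingSpaces}, and \cite[Theorem 2.10]{JoyalSImplicial}. So there is nothing to compare against in the paper itself; the theorem functions here purely as a black box imported from the literature.

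That said, your proposal tracks quite closely the argument in the cited sources, especially Dugger--Spivak's necklace approach to the mapping-space comparison, and your identification $\pi_0\mathfrak{C}(S)\cong P(S)$ is exactly the paper's Lemma~\ref{weirdlem}. Two small cautions if you intend to flesh this out. First, the characterisation ``a Joyal equivalence is exactly a map inducing weak equivalences of all derived mapping spaces and an equivalence of homotopy categories'' is only directly available between quasi-categories; for arbitrary simplicial sets you must either pass through a fibrant replacement or phrase the argument more carefully, so that the reflection claim for $\mathfrak{C}$ really goes through. Second, the step ``its mapping space between $x$ and $y$ is naturally equivalent to $\mathrm{hom}_Y(x,y)$ (a standard property of the homotopy coherent nerve)'' is not independent of the hard comparison you isolated---in Lurie's treatment and in Dugger--Spivak this equivalence is essentially the same theorem, so invoking it separately in the counit step risks circularity. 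A cleaner route is the one you already have: use the mapping-space comparison once, applied to $S=\mathfrak{B}(Y)$, together with the elementary fact that the counit is bijective on objects.
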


This theorem appears as \cite[Theorem 2.2.5.1]{Lurie}, \cite[Corollary 8.2]{MappingSpaces} and \cite[Theorem 2.10]{JoyalSImplicial}. 
\\

We now want to construct a sectionwise fibration replacement of a morphism in the Bergner model structure. This will be important in proving the existence of the local Bergner model structure (c.f. \ref{lem4.5}). 

Given a simplicial category $C$ and $n \in \mathbb{N} $, we can construct a simplicial category $C^{(n)}$ such that its objects are objects of $C$ and $\mathrm{hom}_{C^{(n)}}(x, y) = \mathrm{hom}_{C}(x, y)^{\Delta^{n}}$. 

\begin{example}\label{exam1.6}

Let $f: X \rightarrow Y$ be a map of fibrant simplicial categories. Write $Y^{I}$ for the fibrant simplicial category $Y^{(1)}$. For $i = 0, 1$, let $d_{i} : Y^{I} \rightarrow Y$ be the map such that it is the identity on objects and for $x, y \in Y^{I}$, $\mathrm{hom}_{Y^{I}}(x, y) \rightarrow \mathrm{hom}_{Y}(x, y)$ is the map 
$$
\mathrm{hom}_{Y}(x, y)^{\Delta^{1}} \rightarrow \mathrm{hom}_{Y}(x, y)
$$
induced by $d^{i} : \Delta^{0} \rightarrow \Delta^{1}$. 
This map is a trivial fibration since $Y$ is fibrant. Thus, each $d_{i}$ is trivial sCat-fibration. Moreover, the $d_{i}$'s have a common section $s$, and one can apply a standard construction for categories of fibrant objects (c.f. the factorization lemma of \cite[pg. 421]{Brown}) to produce a functorial fibration replacement
\begin{equation*}
\xymatrix{
X \ar[r]^{s_{*}} \ar[dr]_{f} & Z_{f} \ar[d]_{\pi}  \\
& Y  
}
\end{equation*}
such that $s_{*}$ is the section of a trivial sCat-fibration.

\end{example}

Given a simplicial category $C$, write $\mathrm{Ex}^{\infty}(C)$ for the simplicial category obtained by applying $\mathrm{Ex}^{\infty}$ to the internal description of $C$ (i.e. $\mathrm{Mor}(C)$, $\mathrm{Ob}(C)$, composition operation, etc.). 

\begin{example}\label{exam1.7}

We can use the right properness of the Bergner model structure to construct a functorial fibration replacement for arbitrary maps of simplicial categories $f: X \rightarrow Y$. Consider the diagram
$$
\xymatrix
{
X \ar[rr]^{j} \ar[dd]_{f} \ar[dr]^{\theta_{f}} & & \mathrm{Ex}^{\infty}(X) \ar[dd]_>>>>>>>>>>>>>>>>>{\mathrm{Ex}^{\infty}(f)} \ar[dr]^{s_{*}}  & \\
& \tilde{Z}_{f} \ar[dl]^{\pi_{f}} \ar[rr]_>>>>>>>>>>>>>>>>>>>>>>>>>>{j_{*}} &  & Z_{\mathrm{Ex}^{\infty}(f)} \ar[dl]^{\pi} \\
Y \ar[rr]_{j} &  & \mathrm{Ex}^{\infty}(Y)  &  \\
}
$$
in which $\pi \circ s_{*}$ is the functorial factorization constructed in \ref{exam1.6} and the front face is a pullback. By the right properness of the Bergner model structure, $j_{*}$ is an sCat-equivalence. Thus, so is $\theta_{f}$. Finally, $\pi_{f}$ is an sCat-fibration, and the map $\pi_{f}$ is a fibration replacement of $f$.

This construction is functorial and commutes with filtered colimits. 
\end{example}

\section{Local sCat-Equivalences}

Let $f: C \rightarrow D$ be a functor. Consider the pullback of categories $\mathrm{Iso}(D)^{[\textbf{1}]} \times_{D} C$, whose objects are isomorphisms $f(c) \rightarrow d$ in $D$ and whose morphisms are commutative squares
$$
\xymatrix
{
f(c) \ar[r] \ar[d] & d \ar[d] \\
f(c') \ar[r] & d' 
}
$$
in $D$.
We can define a map $\phi_{f} : \mathrm{Iso}(D)^{[\textbf{1}]} \times_{D} C \rightarrow D$ by $(f(c) \rightarrow d) \mapsto d$. 
\begin{lemma}\label{lem2.1}
A functor $f : C \rightarrow D$ is an equivalence of categories if and only if 
\begin{enumerate}
\item{ $\mathrm{Mor}(C) \rightarrow \mathrm{Mor}(D) \times_{(\mathrm{Ob}(D) \times \mathrm{Ob}(D))} (\mathrm{Ob}(C) \times \mathrm{Ob}(C))$ is a bijection (fully faithful).}
\item{$\phi_{f} : \mathrm{Iso}(D)^{[\textbf{1}]} \times_{D} C \rightarrow D$
has the right lifting property with respect to $\emptyset \rightarrow *$ (essentially surjective).  }
\end{enumerate}
\end{lemma}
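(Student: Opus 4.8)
The plan is to reduce both conditions to the classical characterization of an equivalence of categories as a fully faithful, essentially surjective functor, and then invoke (or reprove) that characterization. The two clauses of the lemma are, by design, repackagings of these two properties, so most of the work is unwinding the definitions of the fiber product and of the right lifting property against $\emptyset \to *$.

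First I would identify condition (1) with full faithfulness. The target fiber product decomposes as the disjoint union $\coprod_{c, c'} \mathrm{hom}_{D}(f(c), f(c'))$ indexed by pairs of objects of $C$, since an element is a morphism $\psi$ of $D$ together with a pair $(c,c')$ satisfying $(s\psi, t\psi) = (f(c), f(c'))$. The source $\mathrm{Mor}(C)$ likewise decomposes as $\coprod_{c,c'} \mathrm{hom}_{C}(c, c')$, and the comparison map respects these decompositions, restricting on each summand to $g \mapsto f(g)$, i.e. to $\mathrm{hom}_{C}(c, c') \to \mathrm{hom}_{D}(f(c), f(c'))$. Hence the global map is a bijection precisely when each of these hom-set maps is a bijection, which is exactly full faithfulness.

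Next I would identify condition (2) with essential surjectivity. A functor has the right lifting property with respect to $\emptyset \to *$ exactly when it is surjective on objects: a lift in the corresponding square amounts to choosing, for each object of the target, a preimage object. Applied to $\phi_{f}$, whose domain has as objects the isomorphisms $f(c) \xrightarrow{\sim} d$ in $D$ and which sends such an object to $d$, surjectivity on objects says that every $d \in \mathrm{Ob}(D)$ admits an isomorphism $f(c) \cong d$ for some $c \in \mathrm{Ob}(C)$, which is precisely essential surjectivity.

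With (1) and (2) identified respectively with full faithfulness and essential surjectivity, the lemma becomes the statement that $f$ is an equivalence of categories iff it is fully faithful and essentially surjective. The forward direction is immediate from a quasi-inverse. For the converse — the one step I expect to require genuine content rather than bookkeeping — I would build a quasi-inverse $g : D \to C$ by choosing for each object $d$ an object $g(d)$ together with an isomorphism $\eta_{d} : f(g(d)) \xrightarrow{\sim} d$ (this invocation of essential surjectivity, hence of choice, is the only nontrivial point), defining $g$ on morphisms via the inverse of the full-faithfulness bijection, and verifying that the $\eta_{d}$ assemble into a natural isomorphism $fg \cong \mathrm{id}_{D}$ and that full faithfulness forces $gf \cong \mathrm{id}_{C}$.
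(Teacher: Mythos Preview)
Your proposal is correct. The paper states this lemma without proof, treating it as an elementary repackaging of the standard characterization of an equivalence as a fully faithful, essentially surjective functor; your argument correctly supplies exactly that unpacking, identifying condition~(1) with full faithfulness via the decomposition of the fiber product into hom-sets and condition~(2) with essential surjectivity via the observation that lifting against $\emptyset \to *$ means surjectivity on objects.
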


Throughout the rest of the document write $Cat\textbf{Pre}(\mathscr{C})$ and $sCat\textbf{Pre}(\mathscr{C})$ for the presheaves of categories on the site $\mathscr{C}$ and the presheaves of simplicial categories on $\mathscr{C}$, respectively. 

Given a map of presheaves of categories $f: C \rightarrow D$, we can form a pullback $\mathrm{Iso}(D)^{[\textbf{1}]} \times_{D} C$ such that $\mathrm{Ob}(\mathrm{Iso}(D)^{[\textbf{1}]} \times_{D} C)(U)$ consists of isomorphisms $f(c) \rightarrow d$ in $D(U)$. We also have a map 
$\phi_{f} : \mathrm{Iso}(D)^{[\textbf{1}]} \times_{D} C \rightarrow D$ which in each section is the map $\phi_{f}$ of \ref{lem2.1}. 

In light of the preceding lemma, we have the following definition. 

\begin{definition}\label{def2.2}

Suppose that $f: X \rightarrow Y$ is a map of presheaves of categories. Then we say that $f$ is a \textbf{local equivalence of presheaves of categories} if and only if
\begin{enumerate}
\item{The sheafification of the diagram 
$$
\xymatrix
{
\mathrm{Mor}(X) \ar[r] \ar[d] & \ar[d] \mathrm{Mor}(Y) \\
\mathrm{Ob}(X) \times \mathrm{Ob}(X) \ar[r] & \mathrm{Ob}(Y) \times \mathrm{Ob}(Y)
}
$$
is a pullback.
}
\item{
$\phi_{f} : \mathrm{Iso}(Y)^{[\textbf{1}]} \times_{Y} X \rightarrow Y$
has the local right lifting property with respect to $\emptyset \rightarrow *$.
}
\end{enumerate}
\end{definition}

\begin{definition}\label{def2.4}

We call a map $f: X \rightarrow Y$ of presheaves of simplicial categories a \textbf{local sCat-equivalence} if and only if 
\begin{enumerate}
\item{
The following diagram is homotopy cartesian for the injective model structure
$$
\xymatrix
{
\mathrm{Mor}(X) \ar[r] \ar[d] & \mathrm{Mor}(Y) \ar[d]_{(s, t)} \\
\mathrm{Ob}(X) \times \mathrm{Ob}(X) \ar[r] & \mathrm{Ob}(Y) \times \mathrm{Ob}(Y)
}
$$
}
\item{$\pi_{0}(X) \rightarrow \pi_{0}(Y)$ is a local equivalence of presheaves of categories. }
\end{enumerate}
\end{definition}

The following is \cite[Lemma 5.20]{local}.

\begin{lemma}\label{lem2.5}

Suppose we have a pullback diagram of simplicial presheaves
$$
\xymatrix{
B \times_{D} C \ar[r] \ar[d] & \ar[d] B \\
C  \ar[r]_{f} &  D
}
$$
where $f$ is a local Kan fibration. Then the diagram is homotopy cartesian for the injective model structure. 
\end{lemma}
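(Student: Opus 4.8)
The plan is to reduce the statement to a sectionwise computation over a complete Boolean algebra, using the fixed Boolean localization $p : s\textbf{Sh}(\mathscr{C}) \rightarrow s\textbf{Sh}(\mathscr{B})$, and there to invoke the right properness of the ordinary Kan--Quillen model structure. First I would record that being homotopy cartesian for the injective model structure can be phrased purely in terms of local weak equivalences and fibrations. Choosing a factorization $C \xrightarrow{j} \tilde{C} \xrightarrow{\pi} D$ of $f$ with $j$ a local weak equivalence and $\pi$ an injective fibration, the pullback $B \times_{D} \tilde{C}$ is a model for the homotopy pullback, since $\pi$ is an injective fibration and the injective model structure is right proper. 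Hence the square in question is homotopy cartesian exactly when the comparison $B \times_{D} C \rightarrow B \times_{D} \tilde{C}$ induced by $j$ is a local weak equivalence. The remaining content is therefore a local right-properness statement: pulling the local weak equivalence $j$ back along $B \rightarrow D$, between objects that are local Kan fibrant over $D$, again produces a local weak equivalence.

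Next I would transport the entire situation along the Boolean localization $p$ (after sheafification). Its inverse image is exact, so it preserves the pullback $B \times_{D} C$; it preserves and reflects local weak equivalences; and it carries local Kan fibrations to local Kan fibrations. Consequently it suffices to prove that the square is homotopy cartesian over the Boolean site $\mathscr{B}$. The advantage of working over a complete Boolean algebra is twofold: local weak equivalences between presheaves of Kan complexes are detected sectionwise, and a local Kan fibration $f$ has the property that $\mathrm{Ex}^{\infty}(f)$ is a \emph{sectionwise} Kan fibration, the completeness of $\mathscr{B}$ being what promotes the local lifting property against horn inclusions to a sectionwise one after passage to $\mathrm{Ex}^{\infty}$. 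These are the facts about local fibrations recorded in \cite[Chapter 4]{local}.

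Over $\mathscr{B}$ I would then apply $\mathrm{Ex}^{\infty}$ objectwise. As a filtered colimit of right adjoints it preserves finite limits, giving $\mathrm{Ex}^{\infty}(B \times_{D} C) \cong \mathrm{Ex}^{\infty}B \times_{\mathrm{Ex}^{\infty}D} \mathrm{Ex}^{\infty}C$, while each natural map $X \rightarrow \mathrm{Ex}^{\infty}X$ is a sectionwise, hence local, weak equivalence. Since $\mathrm{Ex}^{\infty}(f)$ is now a sectionwise Kan fibration, in every section $U \in \mathscr{B}$ the resulting square of simplicial sets is a genuine pullback along a Kan fibration, and therefore a homotopy pullback by the right properness of the Kan--Quillen model structure. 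A sectionwise homotopy pullback of presheaves of Kan complexes over $\mathscr{B}$ is a local homotopy pullback, so the $\mathrm{Ex}^{\infty}$-square is homotopy cartesian; comparing it with the original square along the local weak equivalences $X \rightarrow \mathrm{Ex}^{\infty}X$ shows the original square is homotopy cartesian over $\mathscr{B}$. Reflecting this conclusion back along $p$ finishes the argument over $\mathscr{C}$.

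The step I expect to be the main obstacle is the Boolean-site analysis, specifically the justification that a local Kan fibration becomes a sectionwise Kan fibration after $\mathrm{Ex}^{\infty}$ and that sectionwise homotopy pullbacks of the resulting diagrams genuinely compute local homotopy pullbacks. These are precisely the places where the special structure of a complete Boolean algebra, together with the compatibility of $\mathrm{Ex}^{\infty}$ with the local theory, is indispensable. By contrast, the characterization of homotopy cartesian squares, the exactness and reflection properties of the Boolean localization, and right properness in each section are routine.
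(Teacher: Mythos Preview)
The paper does not give its own proof of this lemma: immediately before the statement it records ``The following is \cite[Lemma 5.20]{local}'', and no argument is supplied. So there is no in-paper proof to compare against; the result is imported wholesale from Jardine's book.

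Your proposal is a faithful reconstruction of the standard argument one finds in that reference. The reduction to a comparison map, passage along the Boolean localization $p^{*}L^{2}$ (using exactness and that it preserves and reflects local weak equivalences and local Kan fibrations), and then the sectionwise analysis over $\mathscr{B}$ via right properness of the Kan--Quillen model structure is exactly the template. One small point of phrasing: over the Boolean site the crucial fact is really that a local Kan fibration \emph{of sheaves} is already a sectionwise Kan fibration (this is the internal axiom of choice in $\textbf{Sh}(\mathscr{B})$, see \cite[Chapter 4]{local}); the passage through $\mathrm{Ex}^{\infty}$ is not what promotes local lifting to sectionwise lifting, but rather serves the separate purpose of making all objects sectionwise Kan so that sectionwise and local weak equivalences coincide. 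With that adjustment your outline matches the cited proof, and the step you flagged as the main obstacle is indeed where all the content lies.
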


\begin{remark}\label{rmk2.6}
\normalfont
Let $f: X \rightarrow Y$ be a map of simplicial presheaves. Note that $\mathrm{Ob}(X) \times \mathrm{Ob}(X) \rightarrow \mathrm{Ob}(Y) \times \mathrm{Ob}(Y)$ is a sectionwise Kan fibration. Thus, condition 1 of \ref{def2.4} holds if and only if $$\mathrm{Mor}(X) \rightarrow \mathrm{Mor}(Y) \times_{(\mathrm{Ob}(Y)\times \mathrm{Ob}(Y))} (\mathrm{Ob}(X) \times \mathrm{Ob}(X))$$ is a local weak equivalence.

In particular, this shows that condition 1 of \ref{def2.4} is a local analogue of condition (a) in the definition of sCat-equivalence. 
\end{remark}

\begin{remark}\label{rmk2.7}
\normalfont
It is clear from \ref{lem2.1} and \ref{rmk2.6} that a sectionwise  sCat-equivalence of presheaves of simplicial categories is also a local sCat-equivalence. 
\end{remark}

\begin{lemma}\label{lem2.8}
Suppose that $f: C \rightarrow D$ is a map of presheaves of simplicial categories. Then $f$ is a local sCat-equivalence if and only if 
\begin{enumerate}
\item{$f$ satisfies condition 1 of \ref{def2.4}.}
\item{The map $\phi_{\pi_{0}(f)} : (\mathrm{Iso}(\pi_{0}D))^{[\textbf{1}]} \times_{(\pi_{0}D)} (\pi_{0}C) \rightarrow (\pi_{0}D)$ of \ref{def2.2}
has the local right lifting property with respect to $\emptyset \rightarrow *$.}
\end{enumerate}
\end{lemma}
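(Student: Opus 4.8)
The plan is to reduce the statement to the observation that condition 1 of \ref{def2.4} already forces the ``fully faithful'' clause of \ref{def2.2} for the induced map $\pi_0(f)$. Note first that condition 1 of the lemma is identical to condition 1 of \ref{def2.4}, and that condition 2 of the lemma is precisely condition 2 of \ref{def2.2} applied to the map $\pi_0(f) : \pi_0(C) \to \pi_0(D)$ of presheaves of categories. Unwinding \ref{def2.4} and \ref{def2.2}, the assertion that $f$ is a local sCat-equivalence is equivalent to the conjunction of: condition 1 of \ref{def2.4}, condition 1 of \ref{def2.2} for $\pi_0(f)$, and condition 2 of \ref{def2.2} for $\pi_0(f)$. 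The ``only if'' direction of the lemma is therefore immediate. For the ``if'' direction it suffices to show that condition 1 of \ref{def2.4} implies condition 1 of \ref{def2.2} for $\pi_0(f)$; granting this, conditions 1 and 2 of the lemma supply all three clauses, so $\pi_0(f)$ is a local equivalence of presheaves of categories and $f$ is a local sCat-equivalence.

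The heart of the argument is this implication. First I would invoke \ref{rmk2.6} to restate condition 1 of \ref{def2.4} as the statement that the comparison map
$$
\theta : \mathrm{Mor}(C) \longrightarrow \mathrm{Mor}(D) \times_{(\mathrm{Ob}(D) \times \mathrm{Ob}(D))} (\mathrm{Ob}(C) \times \mathrm{Ob}(C))
$$
is a local weak equivalence. I would then apply the associated sheaf of path components functor $\tilde{\pi}_0$, using the standard fact from local homotopy theory that $\tilde{\pi}_0$ carries local weak equivalences to isomorphisms of sheaves (c.f. \cite{local}); thus $\tilde{\pi}_0(\theta)$ is an isomorphism. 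Two bookkeeping identifications remain. On the one hand, since the objects of a simplicial category form a discrete presheaf of sets, the sectionwise path-component presheaf $\pi_0^{\mathrm{pre}} \mathrm{Mor}(C)$ is exactly $\mathrm{Mor}(\pi_0 C)$, and likewise for $D$; sheafifying gives $\tilde{\pi}_0 \mathrm{Mor}(C) \cong \mathrm{Mor}(\widetilde{\pi_0 C})$, with $\mathrm{Ob}(\widetilde{\pi_0 C}) = \widetilde{\mathrm{Ob}(C)}$. On the other hand, because the base $\mathrm{Ob}(D) \times \mathrm{Ob}(D)$ is discrete, the codomain of $\theta$ is in each section a disjoint union of mapping spaces indexed by pairs of objects, so $\pi_0^{\mathrm{pre}}$ commutes with this pullback; since sheafification is exact, so does $\tilde{\pi}_0$.

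Combining these identifications, $\tilde{\pi}_0(\theta)$ becomes precisely the canonical comparison map
$$
\mathrm{Mor}(\widetilde{\pi_0 C}) \longrightarrow \mathrm{Mor}(\widetilde{\pi_0 D}) \times_{(\mathrm{Ob}(\widetilde{\pi_0 D}) \times \mathrm{Ob}(\widetilde{\pi_0 D}))} (\mathrm{Ob}(\widetilde{\pi_0 C}) \times \mathrm{Ob}(\widetilde{\pi_0 C})),
$$
and its being an isomorphism is exactly the statement that the sheafification of the square in condition 1 of \ref{def2.2} for $\pi_0(f)$ is a pullback. This establishes the required implication and hence the lemma. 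The main obstacle I anticipate is justifying that $\tilde{\pi}_0$ commutes with the pullback defining the codomain of $\theta$: in general $\pi_0$ does not preserve pullbacks, and the argument relies essentially on the discreteness of the presheaf of objects (so that the pullback is a fibered disjoint union of mapping spaces) together with exactness of sheafification. Everything else is a matter of unwinding \ref{def2.4} and \ref{def2.2} and identifying $\tilde{\pi}_0 \mathrm{Mor}$ with $\mathrm{Mor}$ of the sheafified path-component category.
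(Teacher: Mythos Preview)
Your proposal is correct and follows essentially the same route as the paper's proof: both reduce to showing that condition~1 of \ref{def2.4} forces condition~1 of \ref{def2.2} for $\pi_0(f)$, invoke \ref{rmk2.6} to rewrite condition~1 as a local weak equivalence of the comparison map, apply sheafified $\pi_0$ to obtain an isomorphism, and then identify the result with the sheafified pullback square for $\pi_0(f)$ using that $\pi_0$ commutes with the relevant pullback because the object presheaves are discrete (so the pullback decomposes as a coproduct of mapping spaces). Your write-up is somewhat more explicit than the paper's about why $\pi_0$ commutes with the codomain pullback, but the argument is the same.
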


\begin{proof}
It suffices to show that these properties imply that $\pi_{0}(f)$ satisfies condition (1) of \ref{def2.2}. By \ref{rmk2.6}, we have a bijection
\begin{equation}\label{goldmine}
L^{2}\pi_{0}\mathrm{Mor}(X) \rightarrow L^{2} \pi_{0}(\mathrm{Mor}(Y) \times_{(\mathrm{Ob}(Y)\times \mathrm{Ob}(Y))} (\mathrm{Ob}(X) \times \mathrm{Ob}(X))).
\end{equation}
Since $\pi_{0}$ commutes with coproducts, there is a natural isomorphism $\pi_{0}\mathrm{Mor}(X) \cong \mathrm{Mor}(\pi_{0}(X))$, and by definition $\mathrm{Ob}(\pi_{0}X) = \mathrm{Ob}(X)_{0} = \pi_{0} \mathrm{Ob}(X)$. It follows that the map in \ref{goldmine}
is naturally isomorphic to the sheafification of
$$
\mathrm{Mor}(\pi_{0}X) \rightarrow \mathrm{Mor}(\pi_{0}Y) \times_{(\mathrm{Ob}(\pi_{0}Y)\times \mathrm{Ob}(\pi_{0}Y))} (\mathrm{Ob}(\pi_{0}X) \times \mathrm{Ob}(\pi_{0}X)),
$$
as required.  
\end{proof}

\begin{lemma}\label{lem2.9}
$X \rightarrow L^{2}(X)$ is a local sCat-equivalence.
\end{lemma}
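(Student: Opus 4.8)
The plan is to verify the two conditions supplied by \ref{lem2.8} for the unit map $f : X \to L^{2}X$, leaning throughout on three standard features of the sheafification functor $L^{2}$: it is a left adjoint (so it preserves all colimits), it is exact (so it preserves finite limits), and its unit $\eta : \mathrm{id} \to L^{2}$ is a \emph{local isomorphism}, i.e. $L^{2}\eta$ is an isomorphism. Since $L^{2}$ is computed degreewise and commutes with the internal description, we have $\mathrm{Mor}(L^{2}X) = L^{2}\mathrm{Mor}(X)$ and $\mathrm{Ob}(L^{2}X) = L^{2}\mathrm{Ob}(X)$, with the structure maps $s,t$ carried to their sheafifications.

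First I would check condition 1 of \ref{def2.4} via \ref{rmk2.6}, so that it suffices to prove that the comparison map
$$
\gamma : \mathrm{Mor}(X) \to \mathrm{Mor}(L^{2}X) \times_{(\mathrm{Ob}(L^{2}X) \times \mathrm{Ob}(L^{2}X))} (\mathrm{Ob}(X) \times \mathrm{Ob}(X))
$$
is a local weak equivalence. This $\gamma$ is precisely the comparison to the pullback arising from the naturality square of $\eta$ applied to $(s,t) : \mathrm{Mor}(X) \to \mathrm{Ob}(X) \times \mathrm{Ob}(X)$. Applying the exact functor $L^{2}$ to this pullback, and using that $L^{2}\eta$ together with the idempotency $L^{2}L^{2} = L^{2}$ turn the bottom and right edges of the square into isomorphisms, one sees that $L^{2}\gamma$ is the identity on $L^{2}\mathrm{Mor}(X)$; hence $\gamma$ is a local isomorphism in each simplicial degree, and therefore a local weak equivalence.

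For condition 2 of \ref{lem2.8} I would show that $\pi_{0}(f) : \pi_{0}(X) \to \pi_{0}(L^{2}X)$ is a local isomorphism of presheaves of categories on both objects and morphisms, which immediately yields the required local right lifting property for $\phi_{\pi_{0}(f)}$. On objects the map is the unit $\mathrm{Ob}(X) \to L^{2}\mathrm{Ob}(X)$, a local isomorphism. On morphisms, using the natural isomorphism $\mathrm{Mor}(\pi_{0}X) \cong \pi_{0}\mathrm{Mor}(X)$ recorded in the proof of \ref{lem2.8}, the map is $\pi_{0}\mathrm{Mor}(X) \to \pi_{0}L^{2}\mathrm{Mor}(X)$; since $\pi_{0}$ is a coequalizer and $L^{2}$ preserves colimits, applying $L^{2}$ and invoking idempotency shows that $L^{2}\pi_{0}\mathrm{Mor}(X) \to L^{2}\pi_{0}L^{2}\mathrm{Mor}(X)$ is an isomorphism, so this map too is a local isomorphism. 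Because $\mathrm{Ob}(\pi_{0}X) \to \mathrm{Ob}(\pi_{0}L^{2}X)$ is then a local epimorphism, any object $d$ of $(\pi_{0}L^{2}X)(U)$ lifts, on some covering sieve of $U$, to an object $c$ with $\pi_{0}(f)(c) = d$; the corresponding identity isomorphisms witness that $\phi_{\pi_{0}(f)}$ has the local right lifting property with respect to $\emptyset \to *$, as required by \ref{def2.2}.

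The main obstacle is the bookkeeping around $\pi_{0}$ and $L^{2}$: one must simultaneously exploit that $L^{2}$ is exact (for the finite limits in condition 1 and the pullback defining $\gamma$) and that it is cocontinuous (for the coequalizer computing $\pi_{0}$ in condition 2), with the idempotency $L^{2}L^{2} = L^{2}$ and the local-isomorphism property of $\eta$ doing the real work in both arguments. Once these compatibilities are recorded, both conditions reduce to the single observation that the unit of sheafification is a local isomorphism.
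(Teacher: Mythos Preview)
Your proposal is correct and follows essentially the same route as the paper: both arguments verify condition~1 of \ref{def2.4} via \ref{rmk2.6} by showing the comparison map to the pullback becomes an isomorphism after sheafification, and both handle condition~2 by observing that $\pi_{0}(X) \to \pi_{0}(L^{2}X)$ is a local isomorphism. The paper's proof simply asserts these two facts in one sentence, while you have unpacked the bookkeeping (exactness, cocontinuity, idempotency of $L^{2}$) that justifies them.
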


\begin{proof}
The maps $\pi_{0}(X) \rightarrow \pi_{0}(L^{2}(X))$ and 
$$
\mathrm{Mor}(X) \rightarrow (\mathrm{Ob}(X) \times \mathrm{Ob}(X)) \times_{\mathrm{Ob}(L^{2}(X)) \times \mathrm{Ob}(L^{2}(X))} \mathrm{Mor}(L^{2}(X))
$$
both induce isomorphisms on associated sheaves. 
\end{proof}

\section{Boolean Localization and Local sCat-Fibrations}

Given a presheaf of simplicial categories $X$ and a simplicial category $C$, write $\mathrm{hom}(C, X)$ for the presheaf of simplicial categories defined by 
$$
U \mapsto \mathrm{hom}(C, X(U)).
$$

\begin{definition}\label{def3.1}
\normalfont
We call a map $f: X \rightarrow Y$ of presheaves of simplicial categories a \textbf{local trivial sCat-fibration} if and only if it has the local right lifting property with respect to all maps 
\begin{enumerate}
\item{$\emptyset \rightarrow *$.}
 \item{$\mathcal{U}(\partial \Delta^{n}) \rightarrow \mathcal{U}(\Delta^{n}), n \in \mathbb{N}$.}
\end{enumerate}
\end{definition}

We call a category $C$ \textbf{finite} if and only if it has a finite number of objects and each $\mathrm{hom}_{C}(x, y)$ is a finite set. We call a simplicial category $C$ \textbf{finite} if and only if it has a finite number of objects and each $\mathrm{hom}_{C}(x, y)$ is a finite simplicial set (i.e. has finitely many non-degenerate simplices).

\begin{lemma}\label{lem3.2}

There are isomorphisms 
$$
p^{*}L^{2}\mathrm{hom}(C, X) \rightarrow \mathrm{hom}(C, p^{*}L^{2}X) 
$$
natural in finite categories $C$ (respectively, finite simplicial categories $C$) and presheaves of categories $X$ (respectively presheaves of simplicial categories $X$).
\end{lemma}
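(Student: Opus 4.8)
The plan is to use that both the sheafification functor $L^{2}$ and the inverse image $p^{*}$ of the Boolean localization preserve finite limits, together with the fact that the internal hom out of a \emph{finite} (simplicial) category is a finite-limit construction applied to the internal description of its target.

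First I would record the two structural inputs. As the reflector onto a Grothendieck topos, $L^{2}$ is left exact, and as the inverse image of a geometric morphism, $p^{*}$ is left exact; hence $p^{*}L^{2}$ preserves finite limits, and in particular finite products. Since an internal category is a model of a finite-limit theory, any left exact functor carries a presheaf of (simplicial) categories to one whose internal description is obtained by applying the functor to $\mathrm{Ob}$, $\mathrm{Mor}$ and the structure maps $s, t, ident, c$. In particular $\mathrm{Ob}(p^{*}L^{2}X) \cong p^{*}L^{2}\mathrm{Ob}(X)$ and $\mathrm{Mor}(p^{*}L^{2}X) \cong p^{*}L^{2}\mathrm{Mor}(X)$, and likewise $L^{2}\mathrm{hom}(C,X)$ is computed on the internal description of $\mathrm{hom}(C,X)$.

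Next I would exhibit the finite-limit presentation of $\mathrm{hom}(C,-)$. For a finite category $C$, an object of $\mathrm{hom}(C, X(U))$ is a functor $C \to X(U)$, i.e. an element of the finite product $\prod_{c \in \mathrm{Ob}(C)} \mathrm{Ob}(X)(U) \times \prod_{g \in \mathrm{Mor}(C)} \mathrm{Mor}(X)(U)$ satisfying the finitely many source, target, identity and composition equations; a morphism of $\mathrm{hom}(C, X(U))$ is a natural transformation, again cut out by finitely many equations from a finite product of copies of $\mathrm{Ob}(X)(U)$ and $\mathrm{Mor}(X)(U)$. Thus $\mathrm{Ob}(\mathrm{hom}(C,X))$ and $\mathrm{Mor}(\mathrm{hom}(C,X))$, and their structure maps, are given by a finite limit of the internal description of $X$; this is precisely where the finiteness of $C$ enters, ensuring that only finite products occur, so that $p^{*}$ and $L^{2}$ — which need not preserve infinite products — may still be passed through. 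In the simplicial case the same presentation holds in each simplicial degree, the finiteness of the hom simplicial sets of $C$ guaranteeing that every level of $\mathrm{hom}(C,X)$ is a finite limit of the simplicial sets of morphisms of $X$.

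Finally I would combine the two. Applying the finite-limit-preserving functor $p^{*}L^{2}$ to the finite-limit presentation of $\mathrm{hom}(C,X)$ produces the same finite limit formed out of $\mathrm{Ob}(p^{*}L^{2}X)$ and $\mathrm{Mor}(p^{*}L^{2}X)$, which is exactly the internal description of $\mathrm{hom}(C, p^{*}L^{2}X)$; matching the induced structure maps yields the asserted isomorphism of presheaves of (simplicial) categories. Naturality in $X$ is immediate because the presentation is functorial in the internal description, and naturality in the finite (simplicial) category $C$ follows since the indexing finite limit is functorial in $C$. I expect the main obstacle to be bookkeeping rather than conceptual: writing the composition and naturality constraints explicitly as equalizers to confirm that $\mathrm{hom}(C,-)$ genuinely is a finite limit, and, in the simplicial case, checking degreewise that the finiteness of $C$ keeps every limit in sight finite.
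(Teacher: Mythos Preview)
Your argument is correct, but it proceeds along a different line from the paper. The paper does not unpack $\mathrm{hom}(C,-)$ as an explicit finite limit in $\mathrm{Ob}$ and $\mathrm{Mor}$; instead it transports the problem to (bi)simplicial presheaves via the nerve. For categories it observes that $\mathrm{hom}(C,X)\cong \mathrm{hom}(\mathrm{sk}_{2}BC, BX)$ and that $\mathrm{sk}_{2}BC$ is a finite simplicial set when $C$ is finite, then quotes a standing lemma to the effect that $p^{*}L^{2}$ commutes with $\mathrm{hom}(K,-)$ for finite $K$; for simplicial categories it repeats this levelwise with bisimplicial presheaves. Your route is more self-contained and conceptually transparent---it isolates exactly the property of $p^{*}L^{2}$ being used (left exactness) and exactly the property of $C$ being used (that $\mathrm{hom}(C,-)$ is a finite limit)---whereas the paper's route is shorter on the page because the relevant finiteness check has already been packaged into the cited lemmas about $\mathrm{hom}$ out of finite (bi)simplicial sets. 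The one place your write-up should be tightened is the simplicial case: a finite simplicial set has infinitely many simplices once degeneracies are counted, so ``finite product in each simplicial degree'' is not literally what happens; you should either invoke that a finite simplicial set is compact (a finite colimit of representables) so that mapping out of it is a finite limit, or note that only the non-degenerate simplices and their face relations contribute.
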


\begin{proof}
 Suppose that $C$ is a category and $X$ is a presheaf of categories.
Note that $\mathrm{hom}(C, X)$ is naturally isomorphic to $\mathrm{hom}(BC, BX) \cong \mathrm{hom}(\mathrm{sk}_{2}(BC), BX)$. The simplicial set $\mathrm{sk}_{2}B(C)$ is finite. Thus, by the standard properties of Boolean localization (c.f. \cite[Lemma 2.6]{Nick}), we have 
$$
p^{*}L^{2}\mathrm{hom}(C, X) \cong \mathrm{hom}(\mathrm{sk}_{2}(BC), p^{*}L^{2}B(X)) \cong \mathrm{hom}(\mathrm{sk}_{2}BC, Bp^{*}L^{2}(X)) \cong \mathrm{hom}(C, p^{*}L^{2}X) .
$$ 

If $C$ is a finite simplicial category and $X$ is a presheaf of simplicial categories, form bisimplicial presheaves $C', X'$ such that $C'_{n,*} = \mathrm{sk}_{2}B(C_{n})$ and $X'_{n, *} = B(X_{n})$.  The constructions of $C'$ and $X'$ are natural, and there are natural isomorphisms
$$
\mathrm{hom}(C, X) \cong \mathrm{hom}(C', X').
$$  
Note that $C'$ is a finite bisimplicial set. 
Thus, we can use the basic properties of Boolean localization for bisimplicial presheaves (c.f. \cite[Lemma 1.17]{Nick2}) and the argument of the case of presheaves of categories to complete the proof.

\end{proof}

\begin{corollary}\label{cor3.3}
There is a natural isomorphism $p^{*}L^{2}\mathfrak{B} \cong \mathfrak{B}p^{*}L^{2}$. 
\end{corollary}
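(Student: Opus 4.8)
The plan is to apply \ref{lem3.2} in each simplicial degree and then glue the resulting isomorphisms together using the naturality in the category variable. Recall that $\mathfrak{B}$ is the singular functor associated to the cosimplicial simplicial category $\Phi$, so that for a presheaf of simplicial categories $X$ one has $\mathfrak{B}(X)_{n} = \mathrm{hom}(\Phi^{n}, X)$, with the faces and degeneracies of the simplicial presheaf $\mathfrak{B}(X)$ induced by the cosimplicial structure maps $\theta^{*} : \Phi^{m} \to \Phi^{n}$ of $\Phi$. Since $\Phi^{n}$ has finitely many objects and each $\mathrm{hom}_{\Phi^{n}}(i, j) \cong (\Delta^{1})^{j - i - 1}$ is a finite simplicial set, every $\Phi^{n}$ is a finite simplicial category in the sense used in \ref{lem3.2}, and each $\theta^{*}$ is a map of finite simplicial categories.

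First I would note that $p^{*}$ and $L^{2}$ are both formed degreewise on simplicial presheaves, so that $(p^{*}L^{2}\mathfrak{B}(X))_{n} = p^{*}L^{2}\mathrm{hom}(\Phi^{n}, X)$, while $(\mathfrak{B}p^{*}L^{2}X)_{n} = \mathrm{hom}(\Phi^{n}, p^{*}L^{2}X)$. Applying \ref{lem3.2} with $C = \Phi^{n}$ then produces, for each $n$, a natural isomorphism
$$
\eta^{X}_{n} : p^{*}L^{2}\mathrm{hom}(\Phi^{n}, X) \xrightarrow{\ \cong\ } \mathrm{hom}(\Phi^{n}, p^{*}L^{2}X).
$$

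Next I would verify that the family $\{\eta^{X}_{n}\}_{n}$ is compatible with the simplicial structure maps, so that it assembles into an isomorphism of simplicial sheaves $\eta^{X} : p^{*}L^{2}\mathfrak{B}(X) \to \mathfrak{B}p^{*}L^{2}X$. This is exactly the naturality of the isomorphism of \ref{lem3.2} in the variable $C$: applying that naturality to each cosimplicial map $\theta^{*} : \Phi^{m} \to \Phi^{n}$ yields the commuting squares relating $\eta^{X}_{m}$ and $\eta^{X}_{n}$ to the faces and degeneracies on both sides. Finally, the naturality of \ref{lem3.2} in $X$ shows that the $\eta^{X}$ are themselves natural in $X$, giving the claimed natural isomorphism $p^{*}L^{2}\mathfrak{B} \cong \mathfrak{B}p^{*}L^{2}$.

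The only real content beyond bookkeeping is the degreewise statement, which is precisely \ref{lem3.2}; the anticipated obstacle is therefore not in the degreewise isomorphism but in checking that it respects the cosimplicial structure of $\Phi$. I expect this to reduce cleanly to the naturality clause of \ref{lem3.2} in $C$, so that no essential difficulty remains.
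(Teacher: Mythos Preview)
Your proposal is correct and is precisely the argument the paper has in mind: the corollary is stated without proof immediately after \ref{lem3.2}, and your degreewise application of that lemma with $C = \Phi^{n}$ (noting each $\Phi^{n}$ is finite), together with naturality in $C$ to handle the simplicial structure maps, is exactly how one unpacks it.
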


\begin{corollary}\label{cor3.4}
$p^{*}L^{2}$ preserves and reflects the property of having the local right lifting property with respect to a map of finite simplicial categories. In particular, $f$ is a local trivial sCat-fibration if and only if $p^{*}L^{2}(f)$ is a sectionwise trivial fibration in the Bergner model structure. 
\end{corollary}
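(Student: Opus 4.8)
The plan is to recast the local right lifting property as the condition that a certain comparison map of presheaves of sets be a local epimorphism, and then to transport this condition across $p^{*}L^{2}$ using \ref{lem3.2}. So fix a map $i : K \to L$ of finite simplicial categories and a map $f : X \to Y$ of presheaves of simplicial categories. Since $\mathrm{hom}(K, -)$ and $\mathrm{hom}(L, -)$ take values in presheaves of sets, I first want to observe that, by the adjoint description of lifting problems, $f$ has the local right lifting property with respect to $i$ precisely when the induced map
$$
\Phi_{f} : \mathrm{hom}(L, X) \to \mathrm{hom}(K, X) \times_{\mathrm{hom}(K, Y)} \mathrm{hom}(L, Y)
$$
has the local right lifting property with respect to $\emptyset \to *$, i.e. when $\Phi_{f}$ is a local epimorphism of presheaves of sets. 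This is just unwinding definitions: a commutative square over $U$ as in the definition of the local right lifting property is the same datum as a section of the target of $\Phi_{f}$ over $U$, and a local solution is the same as a local lift of that section through $\Phi_{f}$.

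Next I would commute $\Phi_{f}$ past $p^{*}L^{2}$. Because $L^{2}$ and $p^{*}$ are both exact, $p^{*}L^{2}$ preserves the pullback appearing in the target of $\Phi_{f}$, and the isomorphisms of \ref{lem3.2} are natural and available here since $K$ and $L$ are finite. Combining these gives a natural isomorphism identifying $p^{*}L^{2}\Phi_{f}$ with the analogous comparison map $\Phi_{p^{*}L^{2}f}$ built from $p^{*}L^{2}f$ and $i$ over the Boolean site $\mathscr{B}$.

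The decisive step is then the Boolean-localization dictionary for epimorphisms: a map of presheaves of sets is a local epimorphism if and only if its image under $p^{*}L^{2}$ is a sectionwise epimorphism. This rests on the exactness of $L^{2}$ and $p^{*}$, on the faithfulness of $p^{*}$ (so that $p^{*}$ both preserves and reflects epimorphisms of sheaves), and on the fact that epimorphisms of sheaves over a complete Boolean algebra are sectionwise. Applying this to $\Phi_{f}$ and invoking the identification of the previous paragraph, $\Phi_{f}$ is a local epimorphism if and only if $\Phi_{p^{*}L^{2}f}$ is a sectionwise epimorphism, i.e. if and only if $p^{*}L^{2}f$ has the sectionwise right lifting property with respect to $i$. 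This establishes that $p^{*}L^{2}$ preserves and reflects the local right lifting property with respect to maps of finite simplicial categories. I expect this dictionary, together with checking that all the hom-objects really are presheaves of sets so that it applies verbatim, to be the one point that needs genuine care; everything else is formal bookkeeping.

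For the final clause I would simply specialize to the generating cofibrations. The maps $\emptyset \to *$ and $\mathcal{U}(\partial \Delta^{n}) \to \mathcal{U}(\Delta^{n})$ are all maps of finite simplicial categories and are exactly the generating cofibrations of the Bergner model structure, so a sectionwise trivial fibration is precisely a map with the sectionwise right lifting property against all of them. Hence $f$ is a local trivial sCat-fibration, i.e. has the local right lifting property against both families, if and only if $p^{*}L^{2}f$ has the sectionwise right lifting property against both families, which is exactly the statement that $p^{*}L^{2}f$ is a sectionwise trivial fibration in the Bergner model structure.
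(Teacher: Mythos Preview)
Your proposal is correct and is exactly the argument the paper has in mind: the corollary is stated without proof, as an immediate consequence of \ref{lem3.2}, and your reformulation via the comparison map $\Phi_f$ together with the standard Boolean-localization dictionary for local epimorphisms is the natural way to unpack it. The only cosmetic point is that the paper's definition of $\mathrm{hom}(C,X)$ just above \ref{def3.1} calls it a presheaf of simplicial categories, but as the proof of \ref{lem3.2} makes clear it is really being used as the presheaf of sets $U \mapsto \mathrm{hom}_{\mathrm{sCat}}(C, X(U))$, which is how you are treating it.
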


If $X$ is a quasi-category, let $J(X)$ denote its maximal Kan subcomplex. If $X$ is a presheaf of quasi-categories, let $J(X)$ denote the functor $J$ applied sectionwise. 

\begin{lemma}\label{lem3.5}
If $C$ is a category, then $JB(C) \cong B\mathrm{Iso}(C)$. In particular, $\mathrm{Iso}$ applied sectionwise commutes with Boolean localization. 
\end{lemma}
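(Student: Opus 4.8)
The plan is to identify, using standard quasi-category theory, the maximal Kan subcomplex of the nerve $B(C)$ with the nerve of the subcategory of isomorphisms, and then to observe that $\mathrm{Iso}$ is cut out of $C$ by finite limits and is therefore preserved by Boolean localization. First I would recall that for any small category $C$ the nerve $B(C)$ is a quasi-category, since its inner horns have unique fillers, so that $JB(C)$ is defined as its maximal Kan subcomplex. By Joyal's description of $J$, for a quasi-category $Y$ the subcomplex $J(Y)$ consists of exactly those simplices all of whose edges are equivalences, that is, edges whose class is an isomorphism in the homotopy category $P(Y)$.

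Next I would identify the equivalences of $B(C)$. Since the nerve is fully faithful we have $P(B(C)) \cong C$, so a $1$-simplex of $B(C)$, i.e.\ a morphism $f$ of $C$, is an equivalence in the quasi-category $B(C)$ if and only if $f$ is an isomorphism of $C$. Consequently an $n$-simplex $c_{0} \to \cdots \to c_{n}$ lies in $JB(C)$ precisely when all of its edges are isomorphisms of $C$; and because isomorphisms are closed under composition and contain the identities, this holds if and only if each of the generating morphisms $c_{i} \to c_{i+1}$ is an isomorphism. This is exactly the condition for the string to be an $n$-simplex of $B\mathrm{Iso}(C)$, so $JB(C)$ and $B\mathrm{Iso}(C)$ coincide as subcomplexes of $B(C)$, naturally in $C$, which gives the first assertion.

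For the second assertion I would exhibit $\mathrm{Iso}(C)$ as a finite-limit construction on the internal data $(\mathrm{Ob}(C), \mathrm{Mor}(C), s, t, \mathrm{ident}, c)$ of $C$. The invertible morphisms form a subobject of $\mathrm{Mor}(C)$ which, because inverses are unique when they exist, is carried isomorphically by the first projection onto the object of pairs $(f,g)$ satisfying $c(f,g) = \mathrm{ident}$ and $c(g,f) = \mathrm{ident}$ (together with the evident source and target conditions). This object of pairs is a finite limit of the data of $C$, and the projection to $\mathrm{Mor}(C)$ is a monomorphism. Since $p^{*}L^{2}$ is the composite of a sheafification with the inverse image of a geometric morphism, it is left exact, hence preserves finite limits and monomorphisms, and therefore commutes with the formation of $\mathrm{Iso}$, yielding $p^{*}L^{2}\mathrm{Iso}(C) \cong \mathrm{Iso}(p^{*}L^{2}C)$. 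Alternatively, this also follows by combining the first assertion with the fact, used in the proof of \ref{lem3.2}, that the nerve $B$ commutes with $p^{*}L^{2}$.

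I expect the technical heart to be the first assertion, specifically pinning down that the maximal Kan subcomplex of $B(C)$ is the subcomplex on the isomorphisms; this rests on the quasi-categorical characterization of $J$ and on the equivalences of $B(C)$ being exactly the isomorphisms of $C$, both standard but needing to be invoked carefully. The second assertion is then a routine exactness argument, the only delicate point being to phrase invertibility as a genuine finite limit, using uniqueness of inverses, rather than as an existential condition, so that left exactness of $p^{*}L^{2}$ genuinely applies.
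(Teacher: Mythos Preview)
Your argument for the first assertion is essentially the paper's: both identify the simplices of $JB(C)$ as strings of morphisms that are invertible in $PB(C)\cong C$, hence as the simplices of $B\mathrm{Iso}(C)$. For the second assertion you take a different route. The paper argues by stringing together the natural isomorphisms $Bp^{*}L^{2}\cong p^{*}L^{2}B$ and $Jp^{*}L^{2}\cong p^{*}L^{2}J$ (the latter quoted from \cite[Lemma~3.6]{Nick}) and then invoking the first assertion on both ends; since $B$ is fully faithful this yields $p^{*}L^{2}\mathrm{Iso}\cong \mathrm{Iso}\,p^{*}L^{2}$. Your primary argument instead expresses the object of invertible arrows as a finite limit of the internal data of $C$ and uses left exactness of $p^{*}L^{2}$; this is correct and has the virtue of being self-contained, not needing the external input that $J$ commutes with Boolean localization. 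Note, however, that your closing ``alternatively'' is not quite the paper's argument and is incomplete as stated: from $JB\cong B\mathrm{Iso}$ and $Bp^{*}L^{2}\cong p^{*}L^{2}B$ alone you only get $Bp^{*}L^{2}\mathrm{Iso}(C)\cong p^{*}L^{2}JB(C)$, and to reach $B\mathrm{Iso}(p^{*}L^{2}C)\cong Jp^{*}L^{2}B(C)$ you still need $p^{*}L^{2}J\cong Jp^{*}L^{2}$, which is exactly the extra ingredient the paper invokes.
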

\begin{proof}

For the first statement, note that by construction the $n$-simplices of $JB(C)$ are precisely the strings $a_{1} \rightarrow \cdots \rightarrow a_{n}$ of invertible arrows in $PB(C) \cong C$. 

For the second statement, note that both $B$ and $J$ commute with $p^{*}L^{2}$ (the latter by \cite[Lemma 3.6]{Nick}). Thus, for a presheaf of categories $C$ 
$$
Bp^{*}L^{2}\mathrm{Iso}(C) \cong p^{*}L^{2}B\mathrm{Iso}(C) \cong p^{*}L^{2}JB(C) \cong JBp^{*}L^{2}(C) \cong B\mathrm{Iso}(p^{*}L^{2}(C))
$$
so that $p^{*}L^{2}\mathrm{Iso}(C) \cong \mathrm{Iso}(p^{*}L^{2}(C))$.
\end{proof}

\begin{corollary}\label{cor3.6}
$p^{*}L^{2}$ preserves and reflects local equivalences of categories. 
\end{corollary}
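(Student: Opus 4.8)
The plan is to show that $p^{*}L^{2}$ preserves and reflects each of the two defining conditions of a local equivalence of presheaves of categories (\ref{def2.2}) separately. I will use freely two standard properties of the Boolean localization: that $p^{*}$ and $L^{2}$ are left exact (and $p^{*}$ in addition preserves all colimits), and that $p^{*}$ reflects isomorphisms of sheaves. Since the object and morphism objects of an internal category are built from finite limits, left exactness of $p^{*}$ and $L^{2}$ gives natural isomorphisms $p^{*}L^{2}\mathrm{Ob}(X) \cong \mathrm{Ob}(p^{*}L^{2}X)$ and $p^{*}L^{2}\mathrm{Mor}(X) \cong \mathrm{Mor}(p^{*}L^{2}X)$, which I will use throughout.

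For condition (1), recall that $L^{2}$ preserves finite limits, so the sheafified square of $f$ is a pullback if and only if the canonical comparison map $\psi_{f} : \mathrm{Mor}(X) \to \mathrm{Mor}(Y)\times_{\mathrm{Ob}(Y)\times\mathrm{Ob}(Y)}(\mathrm{Ob}(X)\times\mathrm{Ob}(X))$ of \ref{rmk2.6} becomes an isomorphism after sheafification. Applying $p^{*}$, exactness together with the identifications above lets me rewrite $p^{*}L^{2}\psi_{f}$ as the corresponding comparison map $\psi_{p^{*}L^{2}f}$ for $p^{*}L^{2}f$; since $p^{*}L^{2}X$ is already a sheaf on $\mathscr{B}$, condition (1) holds for $p^{*}L^{2}f$ exactly when $\psi_{p^{*}L^{2}f}$ is an isomorphism. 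As $p^{*}$ reflects isomorphisms of sheaves, $\psi_{p^{*}L^{2}f} \cong p^{*}(L^{2}\psi_{f})$ is an isomorphism if and only if $L^{2}\psi_{f}$ is, which is condition (1) for $f$.

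For condition (2), the key point is to produce a natural isomorphism $p^{*}L^{2}\phi_{f} \cong \phi_{p^{*}L^{2}f}$. Writing $\mathrm{Iso}(Y)^{[\textbf{1}]} = \mathrm{hom}([\textbf{1}], \mathrm{Iso}(Y))$ as a cotensor by the finite category $[\textbf{1}]$, \ref{lem3.2} gives $p^{*}L^{2}(\mathrm{Iso}(Y)^{[\textbf{1}]}) \cong \mathrm{hom}([\textbf{1}], p^{*}L^{2}\mathrm{Iso}(Y))$, and \ref{lem3.5} identifies $p^{*}L^{2}\mathrm{Iso}(Y) \cong \mathrm{Iso}(p^{*}L^{2}Y)$, whence $p^{*}L^{2}(\mathrm{Iso}(Y)^{[\textbf{1}]}) \cong \mathrm{Iso}(p^{*}L^{2}Y)^{[\textbf{1}]}$. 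Left exactness of $p^{*}L^{2}$ preserves the defining pullback $\mathrm{Iso}(Y)^{[\textbf{1}]}\times_{Y}X$, and assembling these identifications naturally yields $p^{*}L^{2}\phi_{f} \cong \phi_{p^{*}L^{2}f}$. Since $\emptyset \to *$ is a map of finite simplicial categories, \ref{cor3.4} shows that $\phi_{f}$ has the local right lifting property with respect to $\emptyset \to *$ if and only if $p^{*}L^{2}\phi_{f}$, and hence $\phi_{p^{*}L^{2}f}$, does; this is precisely condition (2) for $p^{*}L^{2}f$.

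The step I expect to require the most care is the construction of the natural isomorphism $p^{*}L^{2}\phi_{f} \cong \phi_{p^{*}L^{2}f}$: one must check that the isomorphisms coming from \ref{lem3.2}, \ref{lem3.5} and exactness are compatible with the structure maps of the categories involved, so that they genuinely identify $\phi_{f}$ with $\phi_{p^{*}L^{2}f}$ and thereby transport the lifting property. The conservativity used in condition (1) is a standard feature of Boolean localization, and because every equivalence in the argument is a bi-implication, preservation and reflection are obtained simultaneously.
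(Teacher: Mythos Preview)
Your proof is correct and follows essentially the same approach as the paper: both verify condition~(1) via left exactness and conservativity of $p^{*}L^{2}$, and handle condition~(2) by producing the natural isomorphism $p^{*}L^{2}\phi_{f}\cong\phi_{p^{*}L^{2}f}$ from \ref{lem3.2} and \ref{lem3.5}. The only cosmetic difference is that where you invoke \ref{cor3.4} for the lifting property against $\emptyset\to *$, the paper phrases this directly as ``$p^{*}L^{2}$ preserves and reflects the property of being an epimorphism on objects''; these are equivalent.
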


\begin{proof}
The non-trivial part is showing that $p^{*}L^{2}$ preserves and reflects condition (2) of \ref{def2.2}. Given a map of presheaves of categories $f$, write $\phi_{f}$ for the map in condition (2) of \ref{def2.2}. We have $p^{*}L^{2}(\phi_{f}) \cong \phi_{p^{*}L^{2}(f)}$ by \ref{lem3.2} and \ref{lem3.5}. But $p^{*}L^{2}$ also preserves and reflects the property of being an epimorphism on objects, as required.  
\end{proof}

\begin{lemma}\label{lem3.7}
$p^{*}L^{2}$ preserves and reflects local sCat-equivalences. 
\end{lemma}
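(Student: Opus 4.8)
The plan is to show that $p^{*}L^{2}$ preserves and reflects each of the two defining conditions of a local sCat-equivalence in \ref{def2.4} separately. Throughout I would use the standard properties of Boolean localization: $p^{*}L^{2}$ is exact (preserves finite limits), is cocontinuous, acts degreewise on simplicial presheaves, and preserves and reflects local weak equivalences (c.f. \cite[Section 2]{Nick}). In particular, because $p^{*}L^{2}$ preserves the finite limits defining composability, it commutes with the internal description of a presheaf of simplicial categories, so that $\mathrm{Mor}(p^{*}L^{2}C) \cong p^{*}L^{2}\mathrm{Mor}(C)$ and $\mathrm{Ob}(p^{*}L^{2}C) \cong p^{*}L^{2}\mathrm{Ob}(C)$ for $f : C \rightarrow D$.

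For condition 1 of \ref{def2.4}, I would invoke \ref{rmk2.6} to replace the homotopy cartesian requirement by the condition that the comparison map
$$
\mathrm{Mor}(C) \rightarrow \mathrm{Mor}(D)\times_{(\mathrm{Ob}(D)\times\mathrm{Ob}(D))}(\mathrm{Ob}(C)\times\mathrm{Ob}(C))
$$
is a local weak equivalence. Since $p^{*}L^{2}$ preserves finite limits and commutes with $\mathrm{Mor}$ and $\mathrm{Ob}$, applying it to this map yields, up to natural isomorphism, the corresponding comparison map for $p^{*}L^{2}(f)$. As $p^{*}L^{2}$ preserves and reflects local weak equivalences, and as \ref{rmk2.6} applies equally over the Boolean site $\mathscr{B}$ (the relevant map of object presheaves is again a sectionwise Kan fibration), condition 1 is both preserved and reflected.

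For condition 2 of \ref{def2.4}, the key point is that $\pi_{0}$ commutes with $p^{*}L^{2}$, i.e. that there is a natural isomorphism $\pi_{0}(p^{*}L^{2}C)\cong p^{*}L^{2}\pi_{0}(C)$. On objects this is immediate, since $\mathrm{Ob}(\pi_{0}C)=\mathrm{Ob}(C)$ and $p^{*}L^{2}$ commutes with $\mathrm{Ob}$. On morphisms, $\mathrm{Mor}(\pi_{0}C)$ is the coequalizer of the face maps $d_{0},d_{1}\colon \mathrm{Mor}(C)_{1}\rightrightarrows \mathrm{Mor}(C)_{0}$ of the simplicial presheaf $\mathrm{Mor}(C)$; because $p^{*}L^{2}$ is cocontinuous and degreewise, it commutes with this coequalizer, giving the desired isomorphism. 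Granting this, Corollary \ref{cor3.6} shows that $p^{*}L^{2}$ preserves and reflects local equivalences of presheaves of categories, so $\pi_{0}(f)$ is a local equivalence of presheaves of categories if and only if $\pi_{0}(p^{*}L^{2}f)\cong p^{*}L^{2}\pi_{0}(f)$ is. Combining the two parts then yields that $f$ is a local sCat-equivalence if and only if $p^{*}L^{2}(f)$ is.

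The main obstacle I anticipate is precisely the compatibility $\pi_{0}p^{*}L^{2}\cong p^{*}L^{2}\pi_{0}$: one must be careful that the presheaf-level $\pi_{0}$ appearing in \ref{def2.4}, formed before any sheafification, interacts correctly with $p^{*}L^{2}$ so that Corollary \ref{cor3.6} can be applied to $\pi_{0}(f)$. This is where the degreewise cocontinuity of $p^{*}L^{2}$ does the real work; once it is in hand, everything else reduces to the exactness of $p^{*}L^{2}$ together with the already established results \ref{rmk2.6} and \ref{cor3.6}.
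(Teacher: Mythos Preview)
Your approach is essentially the same as the paper's: verify both conditions of \ref{def2.4} separately, using \ref{rmk2.6} for condition~1 and \ref{cor3.6} for condition~2. Condition~1 is handled correctly.

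There is, however, a gap in your treatment of condition~2. The claimed isomorphism $\pi_{0}(p^{*}L^{2}C)\cong p^{*}L^{2}\pi_{0}(C)$ is not correct as stated. Your coequalizer argument shows that $p^{*}L^{2}\mathrm{Mor}(\pi_{0}C)$ is the coequalizer, computed in \emph{sheaves} on $\mathscr{B}$, of $p^{*}L^{2}\mathrm{Mor}(C)_{1}\rightrightarrows p^{*}L^{2}\mathrm{Mor}(C)_{0}$. But $\mathrm{Mor}(\pi_{0}p^{*}L^{2}C)$ is the coequalizer of that same diagram computed in \emph{presheaves}: the functor $\pi_{0}$ applied to a sheaf of simplicial categories returns only a presheaf of categories. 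These two coequalizers differ by a sheafification, so what your argument actually yields is
$$
p^{*}L^{2}\pi_{0}\;\cong\; L^{2}\pi_{0}p^{*}L^{2},
$$
which is exactly the isomorphism the paper establishes (there via adjunction, observing that $p_{*}i\cong ip_{*}$ for the inclusion $i$ of discrete simplicial categories). The repair is easy and is what the paper does: the conditions defining a local equivalence of presheaves of categories in \ref{def2.2} are phrased in terms of sheafifications and local right lifting properties, hence are invariant under $L^{2}$. Thus $\pi_{0}p^{*}L^{2}(f)$ is a local equivalence of categories if and only if $L^{2}\pi_{0}p^{*}L^{2}(f)\cong p^{*}L^{2}\pi_{0}(f)$ is, and then \ref{cor3.6} applies. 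You anticipated precisely this difficulty in your final paragraph; the resolution is simply to insert the missing $L^{2}$.
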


\begin{proof}
First note that $\pi_{0}$ is left adjoint to the functor $i: Cat\textbf{Pre}(\mathscr{C}) \rightarrow sCat\textbf{Pre}(\mathscr{C})$ which regards a presheaf of categories as a presheaf of discrete simplicial categories. We have natural isomorphisms $p_{*}i \cong ip_{*}$. Thus, by adjunction, we have a natural isomorphism
$$
p^{*}L^{2}\pi_{0} \cong L^{2}\pi_{0}p^{*}L^{2}.
$$

Now, condition (1) of \ref{lem2.8} is preserved and reflected under Boolean localization by \ref{rmk2.6}. For condition (2), note that if $f$ is a map of presheaves of simplicial categories, $\pi_{0}p^{*}L^{2}(f)$ is a local equivalence of categories if and only if  $ L^{2}\pi_{0}p^{*}L^{2}(f) \cong p^{*}L^{2}\pi_{0}(f)$ is local equivalence of categories. In turn, this is true if and only if if and only if $\pi_{0}(f)$ is a local equivalence of categories. 
\end{proof}

\begin{corollary}\label{cor3.8}
A local trivial sCat-fibration $f: X \rightarrow Y$ is a local sCat equivalence. 
\end{corollary}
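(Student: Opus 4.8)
The plan is to reduce the statement to a sectionwise fact via the Boolean localization functor $p^{*}L^{2}$, exploiting the two transfer results established immediately above. The central point is that Corollary \ref{cor3.4} identifies local trivial sCat-fibrations with maps whose Boolean localization is a sectionwise trivial fibration, while Lemma \ref{lem3.7} shows that local sCat-equivalences are detected by $p^{*}L^{2}$. Chaining these together should give the result with essentially no new computation.

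First I would apply Corollary \ref{cor3.4}: since $f$ is a local trivial sCat-fibration, $p^{*}L^{2}(f)$ is a sectionwise trivial fibration in the Bergner model structure. In particular, in each section $p^{*}L^{2}(f)$ is a trivial fibration, hence an sCat-equivalence; that is, $p^{*}L^{2}(f)$ is a sectionwise sCat-equivalence of sheaves of simplicial categories on $\mathscr{B}$. By Remark \ref{rmk2.7}, applied over the site $\mathscr{B}$, a sectionwise sCat-equivalence is a local sCat-equivalence, so $p^{*}L^{2}(f)$ is a local sCat-equivalence. Finally, Lemma \ref{lem3.7} asserts that $p^{*}L^{2}$ reflects local sCat-equivalences, and so $f$ itself is a local sCat-equivalence.

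The only real subtlety is the passage through the Boolean site in the middle step: one must check that Remark \ref{rmk2.7}, stated for presheaves of simplicial categories on $\mathscr{C}$, remains valid for sheaves on $\mathscr{B}$. This is routine, since $\mathscr{B}$ is itself a Grothendieck site and the characterization of sectionwise sCat-equivalences via Lemma \ref{lem2.1} and Remark \ref{rmk2.6} is site-independent.

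As an alternative, one could avoid Boolean localization and argue directly from Definition \ref{def2.4} and Lemma \ref{lem2.8}: the local right lifting property against $\mathcal{U}(\partial \Delta^{n}) \rightarrow \mathcal{U}(\Delta^{n})$ makes $\mathrm{Mor}(X) \rightarrow \mathrm{Mor}(Y) \times_{(\mathrm{Ob}(Y)\times \mathrm{Ob}(Y))} (\mathrm{Ob}(X) \times \mathrm{Ob}(X))$ a local trivial fibration, which yields condition 1 of Definition \ref{def2.4} by Remark \ref{rmk2.6}, while the local right lifting property against $\emptyset \rightarrow *$ should give condition (2) of Lemma \ref{lem2.8}. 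I expect the verification of this last condition, namely that $\phi_{\pi_{0}(f)}$ inherits the local right lifting property against $\emptyset \rightarrow *$ from the lifting data for $f$, to be the main obstacle in the direct approach, as it requires tracking how local liftings of objects and of $1$-simplices interact after applying $\pi_{0}$; this bookkeeping is precisely what the Boolean route sidesteps, which is why I would prefer it.
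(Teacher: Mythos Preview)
Your proposal is correct and matches the paper's intended argument: the corollary is stated without proof immediately after Lemma \ref{lem3.7}, and the implicit derivation is precisely the chain Corollary \ref{cor3.4} $\Rightarrow$ sectionwise trivial fibration $\Rightarrow$ sectionwise (hence local) sCat-equivalence on $\mathscr{B}$ $\Rightarrow$ reflect via Lemma \ref{lem3.7}. Your alternative direct approach is also viable but, as you note, unnecessary given the Boolean machinery already in place.
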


\begin{lemma}\label{lem3.9}
Let $f: X \rightarrow Y$ be a map of presheaves of simplicial categories that is a sectionwise sCat-fibration.
Then $f$ is a local trivial sCat-fibration if and only if it is a local sCat-equivalence. 
\end{lemma}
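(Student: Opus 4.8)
The plan is to treat the two implications of the biconditional separately. The forward implication, that a local trivial sCat-fibration is a local sCat-equivalence, is exactly \ref{cor3.8} and requires neither the sectionwise fibration hypothesis nor any further work. So the content lies in the converse: assuming $f$ is simultaneously a sectionwise sCat-fibration and a local sCat-equivalence, I would verify directly that it has the local right lifting property with respect to the two families $\emptyset \to *$ and $\mathcal{U}(\partial \Delta^n) \to \mathcal{U}(\Delta^n)$ of \ref{def3.1}.

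First I would treat $\emptyset \to *$, which amounts to local surjectivity on objects. Given an object $y$ of $Y(U)$, condition (2) of \ref{def2.4} says $\pi_0(X) \to \pi_0(Y)$ is a local equivalence of presheaves of categories, so $\phi_{\pi_0(f)}$ has the local right lifting property with respect to $\emptyset \to *$; hence on a covering sieve of $U$ there is, over each $\phi : V \to U$, an object $c$ of $X(V)$ together with an isomorphism $f(c) \cong y|_V$ in $\pi_0(Y(V))$. Such an isomorphism is represented by an equivalence $f(c) \to y|_V$ in $Y(V)$. Because $f$ is a sectionwise sCat-fibration, property (b) of the sCat-fibration definition applies to $f(V)$ and lifts this equivalence to an equivalence $c \to z$ in $X(V)$ with $f(z) = y|_V$ on the nose. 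This furnishes the required local lift of $y$. This step, in which the equivalence-lifting axiom of a fibration converts local essential surjectivity into genuine local surjectivity on objects, is the conceptual crux of the argument.

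Next I would treat $\mathcal{U}(\partial \Delta^n) \to \mathcal{U}(\Delta^n)$. Unwinding \ref{def1.1}, a lifting problem for this map against $f$ over $U$ is precisely a choice of objects $x_1, x_2 \in \mathrm{Ob}(X)(U)$ together with a lifting problem for $\partial \Delta^n \to \Delta^n$ against $\mathrm{hom}_X(x_1, x_2) \to \mathrm{hom}_Y(f x_1, f x_2)$, so it suffices to show each such map of hom-objects is a local trivial Kan fibration. Each is a sectionwise Kan fibration by condition (a) of the sCat-fibration definition, hence a local Kan fibration, so by the standard characterization of local trivial fibrations as local fibrations that are local weak equivalences (\cite[Section 2]{Nick}) it is enough to see it is a local weak equivalence. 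By \ref{rmk2.6}, condition (1) of \ref{def2.4} says $\mathrm{Mor}(X) \to \mathrm{Mor}(Y) \times_{(\mathrm{Ob}(Y) \times \mathrm{Ob}(Y))} (\mathrm{Ob}(X) \times \mathrm{Ob}(X))$ is a local weak equivalence, and the hom-map $\mathrm{hom}_X(x_1, x_2) \to \mathrm{hom}_Y(f x_1, f x_2)$ is recovered from it by pulling back along $(x_1, x_2) : * \to (\mathrm{Ob}(X) \times \mathrm{Ob}(X))|_U$. Since any map of discrete simplicial presheaves is a local Kan fibration, \ref{lem2.5} shows both pullback squares are homotopy cartesian, and invariance of homotopy pullbacks under a pointwise local weak equivalence of cospans then yields that the induced map on fibers is a local weak equivalence, as needed.

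Assembling the two families gives the local right lifting property demanded by \ref{def3.1}, completing the converse. I expect the passage in the third step from the global statement of condition (1) to the fiberwise local weak equivalences to be the most delicate bookkeeping, although \ref{lem2.5} renders it routine; the genuine obstacle is the object-lifting in the second step, where the equivalence-lifting axiom of sCat-fibrations is indispensable for upgrading local essential surjectivity to on-the-nose local surjectivity on objects.
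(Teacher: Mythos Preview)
Your proof is correct and matches the paper's argument closely. The treatment of $\emptyset \to *$ is identical: use local essential surjectivity of $\pi_0(f)$ to find equivalences $f(c) \to y|_V$ on a cover, then invoke the equivalence-lifting axiom of the sectionwise sCat-fibration to get strict preimages.

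For $\mathcal{U}(\partial \Delta^n) \to \mathcal{U}(\Delta^n)$ there is a small difference in packaging. The paper works globally: it observes that $\mathrm{Mor}(X) \to \mathrm{Mor}(Y) \times_{(\mathrm{Ob}(Y)\times \mathrm{Ob}(Y))} (\mathrm{Ob}(X) \times \mathrm{Ob}(X))$ is simultaneously a sectionwise Kan fibration (from the sCat-fibration hypothesis) and a local weak equivalence (from \ref{rmk2.6}), hence a local trivial fibration by \cite[Theorem 4.32]{local}, which immediately gives the local right lifting property against each $\mathcal{U}(\partial \Delta^n) \to \mathcal{U}(\Delta^n)$. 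You instead pass to individual hom-objects over $\mathscr{C}/U$ and use \ref{lem2.5} plus invariance of homotopy pullbacks to transfer the local weak equivalence to fibers. Both are valid, but the paper's route is shorter and sidesteps the homotopy-pullback bookkeeping you flagged as ``most delicate''; since $\mathrm{Ob}(X)\times\mathrm{Ob}(X)$ is discrete, a lifting problem against the global map already concentrates over a single pair $(x_1,x_2)$, so there is no loss in staying global.
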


\begin{proof}
Suppose that $f$ is a local sCat-equivalence.
Then $$\mathrm{Mor}(X) \rightarrow \mathrm{Mor}(Y) \times_{(\mathrm{Ob}(Y)\times \mathrm{Ob}(Y))} (\mathrm{Ob}(X) \times \mathrm{Ob}(X))$$ is a sectionwise Kan fibration and a local weak equivalence. By \cite[Theorem 4.32]{local}, it is a local trivial fibration. In particular, $f$ has the local right lifting property with respect to $\mathcal{U}(\partial \Delta^{n}) \rightarrow \mathcal{U}(\Delta^{n})$. 

 Choose an object $a \in Y(U)$. The map $$\phi_{f} : \mathrm{Ob}(\mathrm{Iso}(\pi_{0}(Y))^{[\textbf{1}]} \times_{(\pi_{0}Y)} (\pi_{0}X)) \rightarrow \mathrm{Ob}(\pi_{0}Y)$$ is a local epimorphism. Thus, there exists a covering $\{ U_{\alpha} \rightarrow U \}$ and equivalences $s_{\alpha} : f(b_{\alpha}) \rightarrow a|_{U_{\alpha}}$.

Since each $X(U_{\alpha}) \rightarrow Y(U_{\alpha})$ is an sCat-fibration, there exists an equivalence $s''_{\alpha} \in \mathrm{Mor}(X)(U_{\alpha})$ such that $f(s_{\alpha}'') = s_{\alpha}$. This means that $f(r_{\alpha}) = a|_{U_{\alpha}}$ for some $r_{\alpha}$. Thus, $f$ has the local right lifting property with respect to $\emptyset \rightarrow *$.

\end{proof}

\begin{lemma}\label{lem3.10}
Suppose that $f$ is a map of presheaves of fibrant simplicial categories. Then $f$ is a local sCat-equivalence if and only if $\mathfrak{B}(f)$ is a local Joyal equivalence. 
\end{lemma}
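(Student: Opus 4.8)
The plan is to detect both conditions through the fixed Boolean localization $p$ and then compare the two notions using the ordinary theory of quasi-categories. The starting point is that $\mathfrak{B}$ commutes with $p^{*}L^{2}$ by \ref{cor3.3}, that $p^{*}L^{2}$ preserves and reflects local sCat-equivalences by \ref{lem3.7}, and that (by the corresponding stalkwise detection result for the local Joyal model structure of \cite{Nick}) $p^{*}L^{2}$ also preserves and reflects local Joyal equivalences. Since $p^{*}L^{2}$ preserves local fibrancy and locally fibrant objects over $\mathscr{B}$ are sectionwise fibrant, we may replace $f$ by $g = p^{*}L^{2}(f)$ and argue over the Boolean site $\mathscr{B}$, where for fibrant objects local weak equivalences coincide with sectionwise ones. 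On $\mathscr{B}$ the claim therefore reduces to the sectionwise statement, in which each section of $g$ is a genuine map of fibrant simplicial categories and $\mathfrak{B}(g)$ is a map of (presheaves of) quasi-categories.

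I would then invoke the characterization that a map of quasi-categories is a Joyal equivalence if and only if it is fully faithful and the induced functor on homotopy categories is essentially surjective, matching the two clauses against \ref{def2.4}. For the fully faithful clause, recall that for a fibrant simplicial category $X$ the derived counit $\mathfrak{C}\mathfrak{B}(X) \to X$ of the Quillen equivalence \ref{thm1.5} is an sCat-equivalence, so the simplicial homs $\mathrm{hom}_{\mathfrak{C}\mathfrak{B}(X)}(a,b)$ are naturally weakly equivalent to $\mathrm{hom}_{X}(a,b)$. Under this identification, fully faithfulness of $\mathfrak{B}(g)$ on each pair of objects is precisely a sectionwise (hence, over $\mathscr{B}$, local) weak equivalence $\mathrm{hom}_{X}(a,b) \to \mathrm{hom}_{Y}(g a, g b)$, which by \ref{rmk2.6} is exactly condition 1 of \ref{def2.4}. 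For the essentially surjective clause, recall that the homotopy category of $\mathfrak{B}(X)$ is naturally isomorphic to $\pi_{0}(X)$; thus essential surjectivity of $\mathfrak{B}(g)$ unwinds to the local right lifting property of $\phi_{\pi_{0}(g)}$ against $\emptyset \to *$. Since fully faithfulness already supplies clause (1) of \ref{def2.2} for $\pi_{0}(g)$, \ref{lem2.8} shows these together are exactly condition 2 of \ref{def2.4}, and assembling the two clauses gives the equivalence.

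The main obstacle is the fully faithful clause: I must produce a natural weak equivalence between $\mathrm{hom}_{\mathfrak{C}\mathfrak{B}(X)}(a,b)$ and $\mathrm{hom}_{X}(a,b)$ that is compatible with Boolean localization and with the homotopy-cartesian formulation of condition 1. This requires that the left adjoint $\mathfrak{C}$ — built from the finite simplicial categories $\Phi^{n} = \mathfrak{C}(\Delta^{n})$ — commutes with $p^{*}L^{2}$, which follows by writing $\mathfrak{C}$ as a colimit of the $\Phi^{n}$ and invoking \ref{lem3.2}, and then checking that the counit $\mathfrak{C}\mathfrak{B}(X) \to X$ and the induced equivalence on mapping spaces are natural in $X$ over the site. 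The remaining verifications — that essential surjectivity of $\mathfrak{B}(g)$ is genuinely the local epimorphism encoded by $\phi_{\pi_{0}(g)}$, and that both reductions respect the passage between the homotopy-cartesian square and the mapping-space fibration of \ref{rmk2.6} — are then routine bookkeeping.
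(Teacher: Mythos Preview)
Your approach is correct in outline but takes a genuinely different route from the paper's. Both arguments pass through Boolean localization via \ref{cor3.3} and \ref{lem3.7}, but the paper never unpacks \ref{def2.4} into a fully-faithful/essentially-surjective comparison. Instead, it first factors $f$ as a sectionwise trivial cofibration followed by a sectionwise sCat-fibration (using that $\mathfrak{B}$ preserves and reflects weak equivalences between fibrant objects to justify the reduction), and then works entirely with \emph{trivial fibrations}: in one direction \ref{lem3.9} turns a local sCat-equivalence which is a sectionwise fibration into a local trivial sCat-fibration, whence $\mathfrak{B}p^{*}L^{2}(f)\cong p^{*}L^{2}\mathfrak{B}(f)$ is a sectionwise trivial fibration; in the other direction one uses the Boolean-site facts from \cite{Nick} that $p^{*}L^{2}\mathfrak{B}(f)$ is a sectionwise quasi-fibration and hence a sectionwise trivial fibration, and then reflects back along $\mathfrak{B}$. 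This avoids any direct analysis of mapping spaces or of the $\pi_{0}$ clause.

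Your route---matching condition~1 of \ref{def2.4} against fully-faithfulness via the counit $\mathfrak{C}\mathfrak{B}(X)\to X$, and condition~2 against essential surjectivity via $\pi_{0}\mathfrak{B}(X)\cong\pi_{0}(X)$---works, but it is heavier: you must justify that $p^{*}L^{2}$ carries presheaves of fibrant simplicial categories to sheaves of fibrant simplicial categories (so that the Boolean-site ``local $=$ sectionwise'' principle for Kan complexes applies to the hom-presheaves), and you must handle the essential-surjectivity clause with care, since $\pi_{0}(p^{*}L^{2}X)$ is only a presheaf and the local epimorphism condition on $\phi_{\pi_{0}(g)}$ does not automatically coincide with a sectionwise condition. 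The paper's factorization-plus-\ref{lem3.9} manoeuvre sidesteps exactly these bookkeeping issues, while your approach has the virtue of making the correspondence between the two defining conditions and the two halves of ``categorical equivalence'' visible. Note also that your appeal to \ref{lem3.2} to show $\mathfrak{C}$ commutes with $p^{*}L^{2}$ is not quite right---\ref{lem3.2} concerns $\mathrm{hom}(C,-)$, i.e.\ the right adjoint side---though the conclusion holds for the easier reason that $\mathfrak{B}$ is applied sectionwise and hence commutes with $p_{*}$, so its left adjoint commutes with $p^{*}L^{2}$.
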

\begin{proof}
We can factorize $f$ as a sectionwise trivial cofibration for the Bergner model structure followed by a sectionwise sCat-fibration. Since $\mathfrak{B}$ is the right adjoint of a Quillen equivalence, it preserves and reflects weak equivalences between fibrant objects, so we can assume that $f$ is a sectionwise sCat-fibration. 

If $f$ is a local sCat-equivalence, it is a local trivial sCat-fibration by \ref{lem3.9}. But then $\mathfrak{B}p^{*}L^{2}(f) \cong p^{*}L^{2}\mathfrak{B}(f)$ is a sectionwise trivial fibration, so that $\mathfrak{B}(f)$ is a local Joyal equivalence. 

Conversely, suppose that $\mathfrak{B}(f)$ is a local Joyal equivalence. Then $p^{*}L^{2}\mathfrak{B}(f) \cong \mathfrak{B}p^{*}L^{2}(f)$ is a sectionwise quasi-fibration by  \cite[Lemma 3.14]{Nick} and \ref{thm1.5}. Thus, it is a sectionwise trivial fibration by \cite[Lemma 3.15]{Nick}. But $\mathfrak{B}$ reflects sectionwise weak equivalences between sectionwise fibrant objects, so that $p^{*}L^{2}(f)$ is a sectionwise sCat-equivalence. 
\end{proof}

\begin{corollary}\label{cor3.11}
Let $f: X \rightarrow Y$ be a map of sheaves of fibrant simplicial categories on a Boolean site $\mathscr{B}$. Then $f$ is a local sCat-equivalence if and only if it is a sectionwise sCat-equivalence.
\end{corollary}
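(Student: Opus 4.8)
The plan is to prove the two implications separately, noting that one is immediate from the general theory. If $f$ is a sectionwise sCat-equivalence then it is a local sCat-equivalence by \ref{rmk2.7}, and this direction uses no special feature of the Boolean site. The content is therefore the converse: a local sCat-equivalence between sheaves of fibrant simplicial categories on $\mathscr{B}$ is sectionwise. For this I would transport the problem to the Joyal side via the homotopy coherent nerve, where the needed Boolean-localization statement is already available in \cite{Nick}.

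Concretely, since $X$ and $Y$ are in particular presheaves of fibrant simplicial categories, \ref{lem3.10} gives that $f$ is a local sCat-equivalence if and only if $\mathfrak{B}(f)$ is a local Joyal equivalence. Two observations make this productive over $\mathscr{B}$. First, $\mathfrak{B} = Sing_{\Phi}$ is a singular functor, hence a right adjoint, so it preserves the limit-type sheaf condition; moreover each $X(U),Y(U)$ is fibrant in the Bergner model structure, so $\mathfrak{B}(X(U)),\mathfrak{B}(Y(U))$ are quasi-categories by \ref{thm1.5}. Thus $\mathfrak{B}(X)$ and $\mathfrak{B}(Y)$ are sheaves of quasi-categories on $\mathscr{B}$. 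Second, by the analogue for the local Joyal model structure of the statement we are proving — part of the Boolean-localization package of \cite{Nick} — a local Joyal equivalence between sheaves of quasi-categories on a Boolean site is exactly a sectionwise Joyal equivalence.

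It then remains to descend along $\mathfrak{B}$. In each section $U$ the map $f(U)\colon X(U)\to Y(U)$ is a morphism between fibrant objects of the Bergner model structure, so by \ref{thm1.5} the right adjoint $\mathfrak{B}$ of a Quillen equivalence reflects weak equivalences between them: $\mathfrak{B}(f(U))$ is a Joyal equivalence if and only if $f(U)$ is an sCat-equivalence. As $\mathfrak{B}$ is applied sectionwise, $\mathfrak{B}(f)$ is a sectionwise Joyal equivalence if and only if $f$ is a sectionwise sCat-equivalence. Chaining the biconditionals,
$$
f \text{ local sCat-equiv.} \iff \mathfrak{B}(f)\text{ local Joyal equiv.} \iff \mathfrak{B}(f)\text{ sectionwise Joyal equiv.} \iff f \text{ sectionwise sCat-equiv.},
$$
establishes both directions of the corollary at once.

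The main obstacle is exactly the Boolean-site input on the Joyal side; everything else is bookkeeping with $\mathfrak{B}$. Should one prefer a self-contained argument, the same obstacle reappears when verifying the two conditions of \ref{def2.4} directly. Condition $(1)$ is harmless: by \ref{rmk2.6} it amounts to a local weak equivalence between sheaves of Kan complexes (the homs of $X,Y$ are Kan complexes since the objects are fibrant), and over a Boolean site a local weak equivalence between locally fibrant simplicial sheaves is sectionwise, by the standard Boolean-localization results of \cite{local}; this yields sectionwise full faithfulness. The delicate point is condition $(2)$, where one must upgrade \emph{local} essential surjectivity of $\pi_0(f)$ to \emph{sectionwise} essential surjectivity. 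This requires gluing objects, which are elements of a sheaf of sets, along a cover; over a complete Boolean algebra one exploits that covering families admit disjoint (partition) refinements, so that the local lifts and the connecting isomorphisms glue uniquely through the sheaf condition. I expect this object-gluing step, rather than the homotopical comparison, to be where the hypothesis that $\mathscr{B}$ is Boolean is genuinely used.
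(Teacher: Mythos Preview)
Your proposal is correct and is essentially the paper's own argument, spelled out in full: the paper's one-line proof (``Immediate from \cite[Corollary 3.10]{Nick}'') is meant to be read as a corollary of \ref{lem3.10}, and \cite[Corollary 3.10]{Nick} is precisely the Joyal-side Boolean statement you invoke (local Joyal equivalences between sheaves of quasi-categories on $\mathscr{B}$ are sectionwise). Your chain of biconditionals via $\mathfrak{B}$ --- including the observation that $\mathfrak{B}$ preserves the sheaf condition and reflects weak equivalences between fibrant objects --- is exactly what the paper leaves implicit.
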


\begin{proof}
Immediate from \cite[Corollary 3.10]{Nick}.
\end{proof}

\section{The Local Bergner Model Structure}

Since the Bergner model structure is cofibrantly generated, there is a \textbf{global projective model structure} on $sCat\textbf{Pre}(\mathscr{C})$ in which the fibrations and weak equivalences are respectively sectionwise fibrations and weak equivalences in the Bergner model structure. The cofibrations are called \textbf{projective cofibrations}. The generating set of projective cofibrations consists of objects of the form $\mathrm{hom}(-, U) \times \phi$, where $\phi$ is a generating cofibration for the Bergner model structure. 

The objective of this section is to prove the following theorem. 

\begin{theorem}\label{thm4.1}
There is a model structure on $sCat\textbf{Pre}(\mathscr{C})$ such that

\begin{enumerate}
\item{The cofibrations are projective cofibrations.}
\item{The weak equivalences are the local sCat-equivalences.}
\item{The fibrations are the maps which have the right lifting property with respect to maps which are both local sCat-equivalences and projective cofibrations. }
\end{enumerate} 
\end{theorem}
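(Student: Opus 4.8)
The plan is to obtain the model structure as an enlargement of the global projective model structure: I keep the class of cofibrations $\mathcal{C} = \mathrm{cof}(I)$ generated by the projective generating cofibrations $I$, and replace the sectionwise sCat-equivalences by the larger class $\mathcal{W}$ of local sCat-equivalences. Since $sCat\textbf{Pre}(\mathscr{C})$ is locally presentable and $\mathcal{C}$ is cofibrantly generated, I would verify the hypotheses of a standard recognition theorem for cofibrantly generated model categories (the Smith/Jardine criterion): that (i) $\mathcal{W}$ satisfies two-out-of-three and is closed under retracts; (ii) every $I$-injective map lies in $\mathcal{W}$; (iii) the class $\mathcal{C}\cap\mathcal{W}$ of trivial cofibrations is closed under pushout and transfinite composition; and (iv) there is a \emph{set} $J \subseteq \mathcal{C}\cap\mathcal{W}$ with $\mathrm{cof}(J) = \mathcal{C}\cap\mathcal{W}$. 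Granting these, the two factorizations in CM5 come from the small object argument applied to $I$ and to $J$, and the nontrivial lifting axiom CM4 follows by the usual Ken Brown retract argument identifying $\mathrm{Fib}\cap\mathcal{W}$ with the $I$-injective maps.

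The routine inputs come first. Closure of $\mathcal{W}$ under two-out-of-three and retracts I would check on the two conditions of \ref{def2.4} separately: condition~1 asserts a square is homotopy cartesian for the injective model structure, which is stable under the relevant manipulations, and condition~2 is a local equivalence of presheaves of categories on $\pi_{0}$, inheriting two-out-of-three and retract closure from \ref{def2.2}. For (ii), an $I$-injective map is by adjunction a sectionwise trivial fibration in the Bergner model structure; such a map is a sectionwise, hence local, sCat-equivalence by \ref{rmk2.7}, and it has the right lifting property against all of $\mathcal{C}$, so it is both a weak equivalence and a fibration. This already supplies the factorization of an arbitrary map as a projective cofibration followed by a trivial fibration.

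The heart of the proof is the construction of the generating set $J$, via a bounded cofibration condition. Fix an infinite cardinal $\kappa$ exceeding the cardinality of $\mathscr{C}$ and of the generating data, and let $J$ be a set of representatives for the $\kappa$-bounded members of $\mathcal{C}\cap\mathcal{W}$ (the generating projective cofibrations are monomorphisms by \ref{lem1.3}, so this makes sense). The key lemma to establish is: if $i : A \to B$ lies in $\mathcal{C}\cap\mathcal{W}$ and $C\subseteq B$ is $\kappa$-bounded, then there is a $\kappa$-bounded subobject $D$ with $C\subseteq D\subseteq B$ such that $A\cap D \to D$ again lies in $\mathcal{C}\cap\mathcal{W}$. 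I would prove this by transporting the problem across $p^{*}L^{2}$, which preserves and reflects local sCat-equivalences by \ref{lem3.7}; over the Boolean site these become sectionwise sCat-equivalences by \ref{cor3.11}, where the functorial, filtered-colimit-preserving fibration replacement of \ref{exam1.7} lets one detect an sCat-equivalence on $\kappa$-bounded pieces, reducing the statement to a cardinality count of the Jardine type. A transfinite small object argument then yields $\mathrm{cof}(J) = \mathcal{C}\cap\mathcal{W}$ and identifies the $J$-injective maps with the fibrations.

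I expect the bounded cofibration condition to be the principal obstacle, because the bounded approximation $D$ must be produced so as to respect the homotopy cartesian square on morphism objects and the $\pi_{0}$ local equivalence of \ref{def2.4} simultaneously, and the passage through $p^{*}L^{2}$ must be reconciled with the cardinality bookkeeping. The secondary technical point is condition (iii), the closure of $\mathcal{C}\cap\mathcal{W}$ under pushouts and transfinite composition; I would handle it by combining the injective-model-structure stability of homotopy cartesian squares (using \ref{lem2.5}) with the analogous stability for local equivalences of presheaves of categories on $\pi_{0}$.
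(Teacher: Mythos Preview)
Your overall strategy coincides with the paper's: establish a bounded cofibration condition (the paper's \ref{lem4.5}), use it to produce a small generating set of trivial cofibrations (the paper's $\mathfrak{J}$, characterized in \ref{lem4.6}, \ref{lem4.7}), prove pushout stability of trivial cofibrations (the paper's \ref{lem4.4}), and deduce CM5 and then CM4 by the usual retract argument. So the architecture is right.

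There are, however, two genuine technical gaps in your sketch. First, your formulation of the bounded cofibration lemma asks that $A\cap D \to D$ land in $\mathcal{C}\cap\mathcal{W}$, i.e.\ that it be a \emph{projective} cofibration. Intersections of subobjects along a projective cofibration need not be projective cofibrations, so this is not what one proves. The paper's \ref{lem4.5} only asserts that $B\cap X \to B$ is a local sCat-equivalence (and a monomorphism, by \ref{lem1.3}); one then corrects this by factoring each such $\alpha$-bounded monomorphism as a projective cofibration $j_m$ followed by a sectionwise trivial sCat-fibration and takes $\mathfrak{J} = \{j_m\}$. Your set $J$ should be built the same way.

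Second, your plan for (iii) --- pushout stability of $\mathcal{C}\cap\mathcal{W}$ --- via \ref{lem2.5} and a direct check on the two clauses of \ref{def2.4} will not go through. A pushout of simplicial categories does not compute $\mathrm{Mor}(-)$ or $\pi_{0}(-)$ as a pushout, so there is no reason the homotopy-cartesian square on morphisms, or the local equivalence on $\pi_{0}$, persists under such a construction; \ref{lem2.5} concerns pullbacks along local Kan fibrations and offers no leverage here. The paper's \ref{lem4.4} handles this by a different mechanism: first reduce (via iterated pushouts along sectionwise trivial Bergner cofibrations) to the case where $A,B,C$ are sectionwise fibrant, then apply $p^{*}L^{2}$, use \ref{cor3.11} to get a sectionwise sCat-equivalence, and invoke the ordinary Bergner model structure to conclude. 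You should replace your proposed argument for (iii) with that one.
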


We call this the \textbf{Local Bergner Model Structure}. We call the fibrations \textbf{sCat-injective fibrations} and the injective objects \textbf{sCat-injective}.

\begin{lemma}\label{lem4.2}
Suppose that $F \in \textbf{Sh}(\mathscr{B})$ is a discrete sheaf. Then 
\begin{enumerate}
\item{$F$ is projective cofibrant.}
\item{For $n \in \mathbb{N}$, let $i_{n} : \mathcal{U}(\partial \Delta^{n}) \subseteq \mathcal{U}(\Delta^{n})$ be the inclusion. Then $F \times i_{n}$ is a projective cofibration.}
\end{enumerate} 
\end{lemma}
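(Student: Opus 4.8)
The plan is to reduce both claims to a single structural fact about the Boolean site: a discrete sheaf splits as a coproduct of representables. Precisely, I would first establish an isomorphism $F \cong \coprod_{k} \mathrm{hom}(-, W_{k})$ in $\textbf{Sh}(\mathscr{B})$ for a suitable family $\{W_{k}\}$ of objects of $\mathscr{B}$ (note that, $\mathscr{B}$ being a complete Boolean algebra, each representable is subterminal and already a sheaf). I expect this decomposition to be the main obstacle. I would either invoke it as a standard feature of sheaves on a complete Boolean algebra (from the Boolean localization theory of \cite[Chapter 3]{local}) or prove it directly by Zorn's lemma: one chooses a maximal family of pairwise disjoint local sections $(W_k, \sigma_k)$ with $\sigma_k \in F(W_k)$, where disjointness means that $\sigma_k|_V = \sigma_l|_V$ for some $V \neq 0$ forces $k = l$. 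Completeness of $\mathscr{B}$ lets one form the joins $\bigvee\{V' : s|_{V'} = \sigma_k|_{V'}\}$ needed to detect when a section $s$ is already generated, and the availability of complements lets one peel off from any ungenerated section a nonzero piece disjoint from every $\sigma_k$, contradicting maximality; this shows the canonical map $\coprod_k \mathrm{hom}(-,W_k) \to F$ is an isomorphism.

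Granting the decomposition, claim (1) is formal. Each representable $\mathrm{hom}(-, W_k)$ is built from the initial object by attaching the single generating projective cofibration $\mathrm{hom}(-, W_k) \times (\emptyset \to *)$, hence is projective cofibrant; since a coproduct of cofibrations is a cofibration, $F \cong \coprod_k \mathrm{hom}(-,W_k)$ is projective cofibrant. If one prefers to verify the lifting property by hand, note that since $F$ is discrete a map $F \to Y$ is the same datum as a map of object sheaves $F \to \mathrm{Ob}(Y)$, so lifting it against a sectionwise trivial fibration $X \to Y$ reduces to lifting along the sectionwise surjection $\mathrm{Ob}(X) \to \mathrm{Ob}(Y)$; under the identification of maps out of $\coprod_k \mathrm{hom}(-,W_k)$ with tuples $(y_k \in \mathrm{Ob}(Y)(W_k))_k$, this is just lifting each $y_k$ through the surjection $\mathrm{Ob}(X)(W_k) \to \mathrm{Ob}(Y)(W_k)$. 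This is exactly where the sheaf hypothesis on $F$ is essential.

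For claim (2), I would use that products distribute over coproducts in the relevant category of sheaves of simplicial categories on $\mathscr{B}$ (which holds because it holds on the underlying object and morphism sheaves in the topos $\textbf{Sh}(\mathscr{B})$). Hence $F \times i_n \cong \coprod_k \bigl(\mathrm{hom}(-, W_k) \times i_n\bigr)$, and each factor $\mathrm{hom}(-,W_k) \times i_n = \mathrm{hom}(-,W_k) \times (\mathcal{U}(\partial\Delta^n) \to \mathcal{U}(\Delta^n))$ is precisely a generating projective cofibration. A coproduct of generating projective cofibrations is a projective cofibration, so $F \times i_n$ is one as well.

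Thus the decomposition into representables is the only nonformal step: afterwards, (1) exhibits $F$ as a coproduct of one-cell complexes and (2) exhibits $F \times i_n$ as a coproduct of generating cofibrations, with the remaining verifications (distributivity, and matching $\mathrm{hom}(-,W)\times(\emptyset\to*)$ and $\mathrm{hom}(-,W)\times i_n$ against the two families of generators) being routine.
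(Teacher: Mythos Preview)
Your approach is sound and uses the same Boolean ingredient as the paper (existence of complements for subsheaves), but the organization is genuinely different. The paper does \emph{not} first decompose $F$ as a coproduct of representables. Instead it runs a direct maximality argument on the poset $\mathcal{Y}$ of subsheaves $E \subseteq F$ for which $E \times i_n$ is already a projective cofibration: a single representable $\mathrm{hom}(-,b) \hookrightarrow F$ shows $\mathcal{Y}$ is nonempty, and if a maximal $E \in \mathcal{Y}$ were strictly smaller than $F$ one picks a section of its Boolean complement $E'$, adjoins the corresponding representable $*|_b$, and obtains a strictly larger element of $\mathcal{Y}$, contradicting maximality. Your route instead establishes the structural fact $F \cong \coprod_k \mathrm{hom}(-,W_k)$ once and for all (via Zorn on maximal families of pairwise disjoint sections) and then reads off both (1) and (2) simultaneously. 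Your version is more conceptual and makes the two claims visibly parallel; the paper's version is shorter, builds $F$ one representable at a time, and never needs to discuss distributivity of products over coproducts.

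One point you should tighten: your decomposition is an isomorphism in $\textbf{Sh}(\mathscr{B})$, whereas the generating projective cofibrations $\mathrm{hom}(-,W_k) \times i_n$ live in $sCat\textbf{Pre}(\mathscr{B})$, and the sheaf coproduct $\coprod_k \mathrm{hom}(-,W_k)$ is generally \emph{not} the presheaf coproduct (they already differ over the initial object $0 \in \mathscr{B}$, and more seriously over any $U$ that splits as a nontrivial join of several $W_k$'s). So the sentence ``$F \times i_n \cong \coprod_k(\mathrm{hom}(-,W_k)\times i_n)$ is a coproduct of generating projective cofibrations'' is literally true only after sheafification, and you should say why that suffices for the lifting property you want. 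The paper's inductive step, which writes $*|_b \coprod E$ without specifying in which category the coproduct is taken, passes over exactly the same issue; so this is a subtlety shared by both arguments rather than a defect peculiar to yours.
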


\begin{proof}
We prove the second statement; the first is similar. If $F$ is empty, the statement is trivial. Thus, we can assume $F \neq \emptyset$. 
Consider the poset $\mathcal{Y}$ of subsheaves $E$ of $F$ such that 
$$
E \times \mathcal{U}(\partial \Delta^{n}) \rightarrow E \times \mathcal{U}(\Delta^{n})
$$
is a projective cofibration. 

First note that this poset is nonempty. Choose an object $x \in E(b)$, and note that 
$E = \mathrm{hom}(-, b) \cong *|_{b} \xrightarrow{x} F$ is contained in $\mathcal{Y}$.    

 By way of contradiction, suppose that the largest element of this poset is $E \subsetneq F$. Then $E$ has a complement in $F$ since $\mathscr{B}$ is Boolean (\cite[Lemma 3.29]{local}). Write $E \coprod E' = F$. Choose an object $b \in \mathscr{B}$ such that $E'(b) \neq \emptyset$. Then  $(*|_{b} \coprod E) \times (i_{n}) \cong \mathrm{hom}(-, b) \coprod E \times (i_{n})$ is a projective cofibration, a contradiction. 

\end{proof}

\begin{lemma}\label{lem4.3}
If $f$ is a projective cofibration, then $p^{*}L^{2}(f)$ is isomorphic to $L^{2}(f')$ for some projective cofibration $f'$. 
\end{lemma}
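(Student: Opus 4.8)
The plan is to reduce to the generating projective cofibrations and then propagate the conclusion along the colimit operations that build arbitrary projective cofibrations. The essential inputs are that $p^{*}L^{2}$ is exact and preserves all colimits, being a composite of the left adjoints $L^{2}$ and $p^{*}$ (both also left exact), and that the global projective model structure is cofibrantly generated, so that every projective cofibration is a retract of a transfinite composite of pushouts of coproducts of the generators $\mathrm{hom}(-, U) \times \phi$. Throughout I read $p^{*}L^{2}$ as acting on the internal description (object set and morphism objects) of a presheaf of simplicial categories, as in \ref{lem3.2} and \ref{cor3.3}.

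First I would settle the generators using \ref{lem4.2}. For $\phi = (\emptyset \to *)$ the generator is $\emptyset \to \mathrm{hom}(-,U)$, and $p^{*}L^{2}$ carries it to $\emptyset \to F$, where $F := p^{*}L^{2}\mathrm{hom}(-,U)$ is a discrete sheaf on $\mathscr{B}$; by \ref{lem4.2}(1) this is $L^{2}$ of the projective cofibration $\emptyset \to F$. For $\phi = i_{n}$ I would use that $p^{*}L^{2}$ preserves finite products together with the fact that $p^{*}$ preserves constant sheaves, so that $p^{*}L^{2}(i_{n})$ is the constant sheaf $L^{2}i_{n}$ and hence $p^{*}L^{2}(\mathrm{hom}(-,U)\times i_{n}) \cong F \times L^{2}i_{n} \cong L^{2}(F \times i_{n})$, the last isomorphism because $F$ is a sheaf and $L^{2}$ preserves finite products. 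By \ref{lem4.2}(2) the map $F \times i_{n}$ is a projective cofibration on $\mathscr{B}$, so this generator also lies in the image of $L^{2}$ on projective cofibrations. This is exactly where the Booleanness of $\mathscr{B}$ enters, through \ref{lem4.2}.

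Next I would introduce the class $\mathcal{A}$ of maps of sheaves of simplicial categories on $\mathscr{B}$ that are retracts of $L^{2}(h)$ for some projective cofibration $h$, and show it is closed under pushout along arbitrary maps of sheaves, coproduct, and transfinite composition. The key device is the adjunction $L^{2} \dashv i$: given a pushout of $g = L^{2}g'$ along a map $h \colon L^{2}A' \to X$ with $X$ a sheaf, I would transpose $h$ to a presheaf map $A' \to iX$, form the pushout of the projective cofibration $g'$ along it, and apply $L^{2}$; since the counit is an isomorphism on sheaves, this recovers the original pushout as $L^{2}$ of a projective cofibration. Transfinite composites are treated by building the presheaf-level tower by induction, taking presheaf colimits at limit stages and using that $L^{2}$ preserves them. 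Because $p^{*}L^{2}$ preserves all of these constructions and retracts, applying it to the cellular presentation of $f$ exhibits $p^{*}L^{2}(f)$ as built from the generator-images (which lie in $\mathcal{A}$ by the previous paragraph) by operations under which $\mathcal{A}$ is closed; hence $p^{*}L^{2}(f) \in \mathcal{A}$, and unwinding the definition of $\mathcal{A}$ yields the desired projective cofibration $f'$.

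The main obstacle is the bookkeeping in the closure argument, specifically the incorporation of retracts: an idempotent on $L^{2}g'$ need not descend to an idempotent on $g'$, so rather than lifting idempotents I would absorb retracts into the definition of $\mathcal{A}$ and verify, using the functoriality of pushouts and transfinite colimits, that pushouts and transfinite composites of retract diagrams are again retracts of the corresponding constructions on genuine projective cofibrations. A secondary point requiring care is to confirm that $p^{*}L^{2}$, acting on the internal description of a presheaf of simplicial categories, genuinely preserves the finite products and colimits invoked above; this follows from its exactness and cocontinuity, as already exploited in \ref{lem3.2} and \ref{cor3.3}.
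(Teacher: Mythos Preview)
Your treatment of the generators (the first paragraph, via \ref{lem4.2}) matches the paper, but thereafter the approaches diverge and your inductive step has a gap. The paper does not attempt to build a presheaf-level tower: after the generator calculation it argues that $p^{*}L^{2}(f)$ lies in the saturation, taken in $sCat\textbf{Sh}(\mathscr{B})$, of maps with the left lifting property against sectionwise trivial sCat-fibrations of sheaves, then factors $p^{*}L^{2}(f)=h\circ g$ in $sCat\textbf{Pre}(\mathscr{B})$ with $g$ a projective cofibration and $h$ a sectionwise trivial sCat-fibration, and finishes with a single retract argument. No transfinite bookkeeping is required.

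The problem with your tower arises at successor stages beyond the first. Having built $X'_{\alpha}$ with $L^{2}X'_{\alpha}\cong p^{*}L^{2}X_{\alpha}$, the next attaching map in sheaves transposes under the sheafification adjunction only to a presheaf map into $L^{2}X'_{\alpha}$, not into $X'_{\alpha}$; once $X'_{\alpha}$ is no longer a sheaf the unit $X'_{\alpha}\to L^{2}X'_{\alpha}$ is not invertible, and there is no reason the attaching map factors through $X'_{\alpha}$, so the presheaf pushout you need cannot be formed with domain $X'_{\alpha}$. Your fallback through the class $\mathcal{A}$ of retracts of $L^{2}(h)$ also stops short of the stated conclusion: it shows at best that $p^{*}L^{2}(f)$ is a \emph{retract} of some $L^{2}(h)$, and---as you yourself remark---the splitting need not descend to the presheaf level, so the final ``unwinding'' does not actually produce an $f'$ with $p^{*}L^{2}(f)\cong L^{2}(f')$. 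The paper's factorization-and-retract argument is designed precisely to circumvent both of these obstructions.
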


\begin{proof}

If $g$ is a monomorphism of simplicial sets, then 
$$p^{*}L^{2}(\mathcal{U}(g) \times \mathrm{hom}(-, U)) \cong \mathcal{U}(g) \times F$$
 is a monomorphism for some sheaf $F$. It is therefore a projective cofibration by \ref{lem4.2}. 

By the first paragraph, $p^{*}L^{2}(f)$ is in the saturation (in the category $sCat\textbf{Sh}(\mathscr{B})$) of maps which have the right lifting property with respect to the sectionwise trivial sCat-fibrations. Thus, $p^{*}L^{2}(f)$ has the right lifting property with respect to all sectionwise trivial sCat-fibrations of sheaves of simplicial categories. 

One can factor $p^{*}L^{2}(f) = h \circ g$, where $g : X \rightarrow X'$ is a projective cofibration and $h : X' \rightarrow Y$ is a sectionwise trivial sCat-fibration.
A standard retract argument now shows that $f$ is the composite of a projective cofibration followed by sheafification.

\end{proof}

\begin{lemma}\label{lem4.4}
Suppose that we have a pushout diagram
$$
\xymatrix
{
A \ar[d]_{f} \ar[r] &  C \ar[d]^{g} \\
B \ar[r] & D
}
$$
where $f$ is a local sCat-equivalence and a projective cofibration. Then so is $g$.
\end{lemma}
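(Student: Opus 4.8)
The plan is to treat the two clauses of the statement separately and, for the nontrivial clause, to reduce to a sectionwise question over the Boolean site.

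First I would dispose of the cofibration part with no work: the projective cofibrations are by definition the saturation of the generating set $\{\mathrm{hom}(-,U)\times\phi\}$, and any saturated class is closed under cobase change, so $g$ is automatically a projective cofibration. Thus the entire content of the lemma is that $g$ is a local sCat-equivalence. Next I would reduce to the Boolean site. The functor $p^{*}L^{2}$ is a composite of left adjoints, so it preserves the pushout; it reflects local sCat-equivalences by \ref{lem3.7}; and by \ref{lem4.3} (together with \ref{lem4.2}) it carries the projective cofibration $f$ to a projective cofibration of sheaves on $\mathscr{B}$. Hence it suffices to prove the statement for the pushout of $p^{*}L^{2}(f)$, and we may as well assume that we are working over a Boolean site with all objects sheaves.

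With this reduction in hand, I would first knock out condition (2) of \ref{def2.4}. Since $\pi_{0}$ is a left adjoint (as noted in the proof of \ref{lem3.7}) it preserves the pushout, so $\pi_{0}(g)$ is the cobase change of the local equivalence of presheaves of categories $\pi_{0}(f)$ along $\pi_{0}$ of the opposite leg. Because $p^{*}L^{2}$ preserves and reflects local equivalences of categories (\ref{cor3.6}), and because at the purely $1$-categorical level the relevant homotopy theory is governed by the canonical model structure on $\mathrm{Cat}$ (in which every object is cofibrant, so trivial cofibrations are closed under cobase change), this part reduces to a routine $1$-categorical closure statement. I would also record here that the object functor $\mathrm{Ob}$ preserves pushouts — it is simultaneously a left and a right adjoint — so that $\mathrm{Ob}(D)\cong \mathrm{Ob}(B)\cup_{\mathrm{Ob}(A)}\mathrm{Ob}(C)$ and the object-level content of condition (1) is immediate. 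This isolates the real difficulty in the morphism simplicial sheaves.

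The heart of the proof is therefore condition (1) of \ref{def2.4}, i.e. (via \ref{rmk2.6}) that $\mathrm{Mor}(C)\to \mathrm{Mor}(D)\times_{\mathrm{Ob}(D)\times\mathrm{Ob}(D)}(\mathrm{Ob}(C)\times\mathrm{Ob}(C))$ is a local weak equivalence. To attack this I would factor $f$ in the global projective model structure as $f = q\circ j$, with $j$ a sectionwise projective trivial cofibration and $q$ a sectionwise sCat-fibration; by two-out-of-three $q$ is a local sCat-equivalence, hence a local trivial sCat-fibration by \ref{lem3.9}. The cobase change of $j$ is again a sectionwise trivial cofibration (trivial cofibrations are closed under pushout in any model category), hence a local sCat-equivalence by \ref{rmk2.7}, so by two-out-of-three the problem collapses to showing that the cobase change of the local trivial sCat-fibration $q$ is a local sCat-equivalence. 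At this last stage I would pass to sectionwise fibrant models using the functorial replacement of \ref{exam1.7} together with \ref{cor3.11} (over $\mathscr{B}$, local sCat-equivalences between fibrant objects are sectionwise), and combine this with the right properness of the Bergner model structure and \ref{cor3.4}. The main obstacle is precisely this final passage: morphism objects of a pushout of simplicial categories acquire new simplices from composites — exactly the phenomenon analysed in the proof of \ref{lem1.3} — so $\mathrm{Mor}(D)$ is not a pushout of the morphism objects, and the delicate point is to keep the morphism square homotopy cartesian while upgrading the \emph{local} equivalence to the \emph{sectionwise} equivalence needed to invoke the standard closure of trivial cofibrations under cobase change.
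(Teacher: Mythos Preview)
Your reduction to the Boolean site and the disposal of the cofibration clause are fine, and the paper proceeds similarly in spirit. The gap is in your treatment of condition (1). You factor $f=q\circ j$ with $j$ a sectionwise trivial cofibration and $q$ a sectionwise sCat-fibration which is a local sCat-equivalence, and then you are left needing the cobase change of the \emph{fibration} $q$ to be a local sCat-equivalence. That is not a standard closure property: trivial fibrations are not stable under pushout in general, right properness concerns pullbacks not pushouts, and the Bergner model structure is not known to be left proper. Your final paragraph appeals to fibrant replacement, \ref{cor3.11}, and right properness, but none of these converts a pushout of a trivial fibration into a weak equivalence; you yourself flag this as ``the main obstacle'' without resolving it. Even if you upgrade $q$ to a sectionwise trivial fibration, the leg $A'\to C'$ along which you push it out has no reason to be a cofibration, so left-properness-style arguments are unavailable.

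The paper avoids this trap by reversing the order of operations: \emph{before} Boolean localizing, it replaces $A$, $B$, and $C$ by sectionwise fibrant objects, using two iterated-pushout tricks (first push out along $C\to\mathcal{Q}(C)$, then factor $A\to C$ through $\mathcal{Q}(A)$) so that the pushout square is preserved up to sectionwise equivalence. Once $A$ and $B$ are fibrant sheaves on $\mathscr{B}$, \ref{cor3.11} makes $p^{*}L^{2}(f)$ a \emph{sectionwise} sCat-equivalence, and combined with the retract/factorization argument of \ref{lem4.3} it becomes a sectionwise trivial cofibration followed by a sheafification map. Both of these are stable under cobase change (the first by the model axioms, the second by \ref{lem2.9}), so the conclusion follows. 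The moral is: do not factor $f$ into (trivial cofibration)$\circ$(fibration); instead arrange for $f$ itself to become a sectionwise trivial cofibration after passing to fibrant models and Boolean localizing.
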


\begin{proof}

 Let $\mathcal{Q}$ be the fibrant replacement functor for the global projective model structure on $sCat\textbf{Pre}(\mathscr{C})$. 
Consider the iterated pushout diagram
$$
\xymatrix
{
A \ar[r] \ar[d] & C \ar[r]_{j} \ar[d] & \mathcal{Q}(C) \ar[d]  \\
B \ar[r] &  D \ar[r] & \mathcal{Q}(C) \cup D
}
$$
The map $j$ is a sectionwise trivial cofibration for the Bergner model structure. Thus, we can assume that $C$ is a presheaf of fibrant simplicial categories. 
Form the iterated pushout diagram
$$
\xymatrix
{
A \ar[r] \ar[d] & \mathcal{Q}(A) \ar[d] \ar[r] & C \ar[d] \\
B \ar[r]_>>>>>>>{j'} & B \cup \mathcal{Q}(A) \ar[r] & B \cup C
}
$$
where the top horizontal composite is a factorization of $A \rightarrow C$. 
The map $j'$ is a sectionwise cofibration and weak equivalence for the Bergner model structure. Thus, we can assume that $A$ and $C$ are sectionwise fibrant. Using the argument of the first paragraph, we can assume $B$ is sectionwise fibrant as well.

If $A,B$ and $C$ are presheaves of fibrant simplicial categories, then 
$p^{*}L^{2}(f)$ is a sectionwise sCat-equivalence by \ref{cor3.11}. By using the argument at the end of the proof of \ref{lem4.3}, we can show that $p^{*}L^{2}(f)$ is the composite of a sectionwise trivial cofibration for the Bergner model structure $X \rightarrow X'$ and the natural map $X' \rightarrow L^{2}(X')$. One concludes that the pushout of $p^{*}L^{2}(f)$ is a local sCat-equivalence, as required.  

\end{proof}

\begin{lemma}\label{lem4.5}
Let $\alpha > |\mathrm{Mor}(\mathscr{C})|$ be a regular cardinal. 
Suppose that we have a diagram of monomorphisms
$$
\xymatrix
{
& X \ar[d] \\
A \ar[r] & Y
}
$$
where $A$ is $\alpha$-bounded and $X \rightarrow Y$ is a local sCat-equivalence. Then there exists an $\alpha$-bounded $B$, $A \subseteq B \subseteq Y$, such that $B \cap X \rightarrow B$ is a local sCat-equivalence. 
\end{lemma}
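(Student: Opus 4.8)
The plan is to construct $B$ as the union of a countable ascending chain of $\alpha$-bounded sub-presheaves of simplicial categories $A = B_{0} \subseteq B_{1} \subseteq \cdots \subseteq Y$, enlarging $B_{n}$ at each stage only enough to clear, over $B_{n}$, the obstructions to being a local sCat-equivalence. By \ref{lem2.8} together with \ref{rmk2.6}, a monomorphism $g : Z \to W$ of presheaves of simplicial categories is a local sCat-equivalence if and only if (A) the map $\mathrm{Mor}(Z) \to \mathrm{Mor}(W)\times_{(\mathrm{Ob}(W)\times\mathrm{Ob}(W))}(\mathrm{Ob}(Z)\times\mathrm{Ob}(Z))$ is a local weak equivalence of simplicial presheaves, and (B) the map $\phi_{\pi_{0}(g)}$ has the local right lifting property with respect to $\emptyset \to *$. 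So it suffices to arrange both (A) and (B) for $B \cap X \to B$. Writing $T$ for the simplicial presheaf $\mathrm{Mor}(Y)\times_{(\mathrm{Ob}(Y)\times\mathrm{Ob}(Y))}(\mathrm{Ob}(X)\times\mathrm{Ob}(X))$, the hypothesis that $f : X \to Y$ is a local sCat-equivalence says exactly that the canonical monomorphism $u : \mathrm{Mor}(X) \to T$ is a local weak equivalence and that $\phi_{\pi_{0}(f)}$ has the local right lifting property with respect to $\emptyset \to *$.

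To handle (A), I will use the bounded cofibration property of local weak equivalences for the injective model structure on simplicial presheaves \cite{local}. At stage $n$, applying it to $u$ with the $\alpha$-bounded subobject of $T$ determined by $B_{n}$ produces an $\alpha$-bounded $W_{n}$, squeezed between the subobjects of $T$ determined by $B_{n}$ and by $Y$, such that $W_{n} \cap \mathrm{Mor}(X) \to W_{n}$ is a local weak equivalence. I then require $B_{n+1}$ to contain the boundedly many objects and morphisms of $Y$ occurring in $W_{n}$, so that $W_{n}$ lies in the subobject of $T$ determined by $B_{n+1}$. Closing the adjoined data under $s, t, ident, c$ and the simplicial operators keeps the result $\alpha$-bounded, since composition of $\alpha$-boundedly many morphisms produces $\alpha$-boundedly many and $\alpha$ is regular.

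To handle (B), I run over the objects $d$ of $B_{n}$ defined over objects $U \in \mathscr{C}$. Since $\phi_{\pi_{0}(f)}$ has the local right lifting property with respect to $\emptyset \to *$, there is a covering sieve $R \subseteq \mathrm{hom}(-, U)$ and, for each $\phi \in R$ with domain $V$, an object $c_{\phi}$ of $X$ and an equivalence $f(c_{\phi}) \to d|_{V}$ in $\pi_{0}(Y)$; I adjoin each $c_{\phi}$ and simplices of $\mathrm{Mor}(Y)$ representing these equivalences and witnessing their invertibility in $\pi_{0}$ to form $B_{n+1}$. As $B_{n}$ is $\alpha$-bounded and, because $\alpha > |\mathrm{Mor}(\mathscr{C})|$, there are fewer than $\alpha$ available covering sieves, fewer than $\alpha$ obstructions arise and each is resolved by $\alpha$-bounded data, so $B_{n+1}$ remains $\alpha$-bounded. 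Setting $B = \bigcup_{n} B_{n}$, regularity of $\alpha$ gives that $B$ is $\alpha$-bounded with $A \subseteq B \subseteq Y$. Every object of $\pi_{0}(B)$ over a given $U$ already lies in some $\pi_{0}(B_{n})$ and acquires its local lift in $B_{n+1} \subseteq B$, which yields (B); and since the subobject of $T$ determined by $B$ equals $\bigcup_{n} W_{n}$, the map in (A) for $B \cap X \to B$ is the filtered colimit of the local weak equivalences $W_{n} \cap \mathrm{Mor}(X) \to W_{n}$, hence a local weak equivalence. By \ref{lem2.8}, $B \cap X \to B$ is a local sCat-equivalence.

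I expect the essential-surjectivity condition (B) to be the main obstacle. The difficulty is to convert the abstract local lift provided by $\phi_{\pi_{0}(f)}$ into a genuinely $\alpha$-bounded packet of simplicial data that remains adequate after passing to $\pi_{0}$: one must adjoin not merely an equivalence $f(c_{\phi}) \to d|_{V}$ but enough of $\mathrm{Mor}(Y)$ to certify its invertibility in $\pi_{0}(Y)$ and to represent the induced isomorphism. Moreover, repairing (B) can create new failures of (A) and conversely, which is precisely why the two repairs are interleaved along the chain and the conclusion deferred to the colimit, where closure of local weak equivalences under filtered colimits reconciles them. A secondary, essentially bookkeeping, point is the translation between subobjects of the simplicial presheaf $T$ and sub-presheaves of simplicial categories of $Y$, the latter being constrained to be closed under composition.
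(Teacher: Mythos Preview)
Your argument is essentially correct, but it takes a different route from the paper's. The paper does not split into the two conditions (A) and (B); instead it invokes the functorial fibration replacement of \ref{exam1.7}, which commutes with filtered colimits, together with \ref{lem3.9}: for any $g:Z\to W$, the map $g$ is a local sCat-equivalence if and only if the replacement $\pi_{g}:\tilde{Z}_{g}\to W$ is a local \emph{trivial} sCat-fibration, i.e.\ has the local right lifting property with respect to the finite maps $\emptyset\to *$ and $\mathcal{U}(\partial\Delta^{n})\to\mathcal{U}(\Delta^{n})$. One then reruns the proof of \cite[Lemma~5.2]{local} verbatim on that single lifting condition, using that the construction $g\mapsto\pi_{g}$ is functorial and filtered-colimit preserving to pass to $B=\bigcup_{n}B_{n}$.

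Your approach, by contrast, reduces condition (A) to the already-proven bounded cofibration property of the injective model structure on simplicial presheaves (that is, you use the \emph{conclusion} of \cite[Lemma~5.2]{local} as a black box for the map $\mathrm{Mor}(X)\to T$), and handle the essential-surjectivity condition (B) by a direct witness-adjoining argument. This avoids building and controlling the fibration replacement $\tilde{Z}_{f}$, at the cost of the bookkeeping you flag: translating between $\alpha$-bounded subobjects $W_{n}\subseteq T$ and sub-presheaves of simplicial categories $B_{n+1}\subseteq Y$ closed under $s,t,\mathrm{ident},c$, and ensuring that the adjoined equivalences in (B) are genuinely invertible in $\pi_{0}(B)$ rather than only in $\pi_{0}(Y)$. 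Both verifications go through as you indicate; the paper's packaging via \ref{exam1.7} and \ref{lem3.9} simply trades that bookkeeping for a uniform lifting criterion.
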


\begin{proof}
This is the same argument as \cite[Lemma 5.2]{local}, using \ref{exam1.7} and \ref{lem3.9}.
\end{proof}

Let $\beta > |\mathrm{Mor}(\mathscr{C})|$.
Then $\alpha = 2^{\beta} +1 $ is a regular cardinal. 
Let $\mathfrak{M}$ be collection of (representatives of isomorphism classes of) all $\alpha$-bounded maps which are local sCat-equivalences and monomorphisms. For each $m \in \mathfrak{M}$, form the factorization 
$$
\xymatrix{
C \ar[r]^{j_{m}} \ar[dr]_{m} & E  \ar[d]^{p_{m}} \\
& D
}
$$
where $j_{m}$ is a projective cofibration and $p_{m}$ has the right lifting property with respect to all projective cofibrations (by the same argument as in \cite[Theorem 4.8]{J1}, we can take $\alpha$ sufficiently large so that this factorization preserves $\alpha$-bounded objects). Let $\mathfrak{J}$ denote the set of all $j_{m}$ above.   Note that $j_{m}$ is a $\alpha$-bounded local sCat-equivalence. 
\\

The following lemma can be proven using the same argument as \cite[Lemma 7.3]{local}, along with \ref{lem4.5}. Note that \ref{lem4.5} applies in this case since every projective cofibration is a monomorphism by \ref{lem1.3}.

\begin{lemma}\label{lem4.6} 
Suppose that $q: X \rightarrow Y$ is a local sCat-equivalence which has the right lifting property with respect to all elements $j_{m}$ of the set $\mathfrak{J}$. Then $q$ has the right lifting property with respect to all projective cofibrations. 
\end{lemma}

Using \ref{lem4.6} and a standard retract argument, we obtain.

\begin{lemma}\label{lem4.7}
A map $f$ is an sCat-injective fibration if and only if it has the right lifting property with respect to all maps in the set $\mathfrak{J}$. 
\end{lemma}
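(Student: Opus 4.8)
The plan is to prove both implications, with the forward one immediate and the reverse one carried by \ref{lem4.6} together with a retract argument. For the forward direction, recall that by construction each $j_{m} \in \mathfrak{J}$ is a projective cofibration and was noted to be an $\alpha$-bounded local sCat-equivalence, hence a trivial projective cofibration. By definition (\ref{thm4.1}), an sCat-injective fibration has the right lifting property with respect to all maps that are simultaneously projective cofibrations and local sCat-equivalences, so in particular it lifts against every $j_{m}$; thus it has the right lifting property with respect to all of $\mathfrak{J}$.

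For the reverse direction, suppose $f$ has the right lifting property with respect to every element of $\mathfrak{J}$, and let $i : A \to B$ be an arbitrary projective cofibration that is also a local sCat-equivalence; I must produce a lift against $f$. Applying the small object argument to the set $\mathfrak{J}$, I factor $i = p \circ j$, where $j : A \to C$ is a relative $\mathfrak{J}$-cell complex and $p : C \to B$ has the right lifting property with respect to all of $\mathfrak{J}$. Since the members of $\mathfrak{J}$ are trivial projective cofibrations, $j$ is again both a projective cofibration and a local sCat-equivalence: the pushout step is exactly \ref{lem4.4}, and transfinite composites of local sCat-equivalences remain local sCat-equivalences since these are stable under filtered colimits. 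Applying two-out-of-three to $i = p \circ j$ then shows that $p$ is a local sCat-equivalence.

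Now I invoke \ref{lem4.6}: the map $p$ is a local sCat-equivalence with the right lifting property with respect to $\mathfrak{J}$, so $p$ has the right lifting property with respect to all projective cofibrations, and in particular against $i$ itself. Lifting in the square with top edge $j$, left edge $i$, right edge $p$, and bottom edge the identity of $B$ produces a section $s : B \to C$ with $s \circ i = j$ and $p \circ s = \mathrm{id}_{B}$, exhibiting $i$ as a retract, in the arrow category, of $j$. Since $f$ lifts against every member of $\mathfrak{J}$, it lifts against the relative $\mathfrak{J}$-cell complex $j$, as the class of maps having the left lifting property with respect to $f$ is saturated; and since this class is also closed under retracts, $f$ lifts against $i$. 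As $i$ was an arbitrary projective cofibration that is a local sCat-equivalence, $f$ is an sCat-injective fibration.

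The genuinely substantive input is \ref{lem4.6}, which is where the bounded-cofibration machinery of \ref{lem4.5} is consumed; everything in the present argument is formal. The only point requiring care is the verification that the relative $\mathfrak{J}$-cell complex $j$ is a local sCat-equivalence, that is, that the trivial projective cofibrations are closed under the pushouts and transfinite composites produced by the small object argument. The pushout case is precisely \ref{lem4.4}, so the sole remaining obstacle is the stability of local sCat-equivalences under transfinite composition, which follows from their compatibility with filtered colimits.
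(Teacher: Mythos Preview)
Your proof is correct and is precisely the ``standard retract argument'' that the paper invokes, with \ref{lem4.6} playing the key role exactly as the paper indicates. The paper's own proof consists of the single sentence ``Using \ref{lem4.6} and a standard retract argument, we obtain,'' and you have simply written out that argument in full: factor the given trivial projective cofibration $i$ through a $\mathfrak{J}$-cell complex $j$, use \ref{lem4.4} and filtered-colimit stability to see $j$ is again a trivial projective cofibration, apply two-out-of-three and \ref{lem4.6} to conclude the cofactor $p$ is a trivial fibration, and then retract $i$ off of $j$.
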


\begin{proof}[Proof of Theorem 4.1]
CM5 follows from \ref{lem4.7} and \ref{lem4.4}. CM4 follows from a standard retract argument (c.f. the proof of \cite[Theorem 5.8]{local}). 
\end{proof}

Recall the main theorem of \cite{Joyal1}.

\begin{theorem}\label{thm4.10}
A quasi-category $X$ is a Kan complex if and only if $P(X)$ is a groupoid.  
\end{theorem}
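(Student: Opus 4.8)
The plan is to identify the path category $P(X)$ of a quasi-category $X$ with its homotopy category, so that the statement becomes a comparison between a horn-filling condition on $X$ and an invertibility condition on its edges. For a quasi-category, every morphism of $P(X)$ is represented by a single edge (composites are realized by inner horn fillers), and two edges represent the same morphism precisely when they are homotopic rel endpoints. Consequently $P(X)$ is a groupoid if and only if every edge $f \colon x \to y$ of $X$ is an \emph{equivalence}, i.e. $[f]$ is invertible in $P(X)$. I would then prove the two implications separately.

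For the direction $X$ Kan $\Rightarrow P(X)$ a groupoid, I would produce a two-sided inverse for an arbitrary edge $f \colon x \to y$ directly from outer horn fillers. Filling the horn $\Lambda^{2}_{0} \to X$ whose $\Delta^{\{0,1\}}$-edge is $f$ and whose $\Delta^{\{0,2\}}$-edge is the degeneracy $s_{0}x$ yields a $2$-simplex exhibiting a left inverse $g$ with $g f \simeq \mathrm{id}_{x}$; symmetrically, filling an appropriate $\Lambda^{2}_{2}$-horn produces a right inverse $h$ with $f h \simeq \mathrm{id}_{y}$. The usual cancellation $[g] = [g][f][h] = [h]$ in $P(X)$ then shows $[f]$ is invertible, so every morphism of $P(X)$ is an isomorphism.

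The substantive direction is $P(X)$ a groupoid $\Rightarrow X$ Kan. Since $X$ is already a quasi-category, the inner horns fill, the horns $\Lambda^{1}_{0}, \Lambda^{1}_{1}$ fill by degeneracies, and it remains to fill the outer horns $\Lambda^{n}_{0}$ and $\Lambda^{n}_{n}$ for $n \geq 2$. Using $P(X^{op}) \cong P(X)^{op}$, together with the facts that $X^{op}$ is again a quasi-category and that $X$ is Kan iff $X^{op}$ is, I would reduce to the case $\Lambda^{n}_{0}$. The key input is Joyal's special outer horn lemma: in a quasi-category, a horn $\Lambda^{n}_{0} \to X$ extends over $\Delta^{n}$ as soon as its initial edge $\Delta^{\{0,1\}}$ is an equivalence. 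Because $P(X)$ is a groupoid, every edge of $X$ — in particular the initial edge of each such horn — is an equivalence, so the lemma applies uniformly, all outer horns fill, and $X$ is a Kan complex.

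I expect the special outer horn lemma to be the main obstacle, and the point where the argument is genuinely delicate rather than formal. I would prove it by induction on $n$: the base case $n = 2$ is a direct homotopy-category computation, building the missing face from a chosen homotopy inverse of the initial edge together with inner-horn composition, while the inductive step requires extending the $n$-horn compatibly with its lower-dimensional data. This step is where the real work lies, and I would either carry it out by a careful simplicial filling argument or sidestep it by recognizing that a quasi-category all of whose edges are equivalences has $X \to \Delta^{0}$ a left fibration, hence a Kan fibration; alternatively, one may simply invoke \cite{Joyal1} for this lemma, since it records precisely the nontrivial content at issue.
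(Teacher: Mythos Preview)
Your proposal is correct, but there is nothing to compare it against: the paper does not prove this theorem. The statement is introduced with the phrase ``Recall the main theorem of \cite{Joyal1}'' and is simply quoted as a known result of Joyal, with no argument supplied. So your write-up already goes well beyond what the paper does.

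That said, your outline is the standard one and is sound. You correctly identify $P(X)$ with the homotopy category of the quasi-category $X$, reduce the question to invertibility of edges, handle the easy direction with outer $2$-horn fillers, and isolate Joyal's special outer horn lemma as the nontrivial content of the converse. Your honest acknowledgment that this lemma is the real work, and your fallback of citing \cite{Joyal1} for it, is exactly how the paper itself treats the matter. If you wish to give a self-contained argument, the cleanest route is the one you mention parenthetically: show that a quasi-category in which every edge is an equivalence has $X \to \Delta^{0}$ a left (equivalently, right) fibration, and then invoke the fact that a left fibration over a Kan complex is a Kan fibration; this avoids an explicit induction on the dimension of the outer horn.
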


\begin{lemma}\label{weirdlem}
Suppose that $X$ is a simplicial set. Then $\pi_{0}\mathfrak{C}(X) \cong P(X)$. 
\end{lemma}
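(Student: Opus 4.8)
The plan is to identify $\pi_{0}\mathfrak{C}$ and $P$ by comparing their right adjoints. Since $\mathfrak{C} \dashv \mathfrak{B}$ (\ref{thm1.5}) and $\pi_{0}$ is left adjoint to the discrete inclusion $i : \mathrm{Cat} \to \mathrm{sCat}$ (the non-presheaf version of the adjunction invoked in the proof of \ref{lem3.7}), the composite $\pi_{0}\mathfrak{C} : \mathrm{sSet} \to \mathrm{Cat}$ is a composite of left adjoints, hence itself left adjoint, with right adjoint $\mathfrak{B}i : \mathrm{Cat} \to \mathrm{sSet}$. On the other hand, $P$ is by definition left adjoint to the nerve functor $B$. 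Thus it suffices to produce a natural isomorphism $\mathfrak{B}i \cong B$, for then $\pi_{0}\mathfrak{C} \cong P$ by uniqueness of left adjoints.

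To compare $\mathfrak{B}i$ with $B$, I would fix a category $D$ and work levelwise. By the description of the homotopy coherent nerve as the singular functor associated to the cosimplicial object $\Phi$ (\ref{def1.4}), we have $\mathfrak{B}(iD)_{n} = \mathrm{hom}_{\mathrm{sCat}}(\Phi^{n}, iD)$. Because $iD$ is a discrete simplicial category, the adjunction $\pi_{0} \dashv i$ yields $\mathrm{hom}_{\mathrm{sCat}}(\Phi^{n}, iD) \cong \mathrm{hom}_{\mathrm{Cat}}(\pi_{0}\Phi^{n}, D)$. So the entire computation reduces to identifying the ordinary category $\pi_{0}\Phi^{n}$.

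The key step is the identification $\pi_{0}\Phi^{n} \cong [\textbf{n}]$. By construction $\mathrm{hom}_{\Phi^{n}}(i, j)$ is the nerve of the poset $\mathcal{P}_{n}[i, j]$ of subsets of the interval $[i, j]$ containing its endpoints. When $i > j$ this poset is empty; when $i = j$ it is a single point (the identity); and when $i < j$ it has a least element $\{i, j\}$ and a greatest element $[i, j]$, so its nerve is contractible, hence connected. Applying $\pi_{0}$ therefore collapses each hom down to exactly one morphism $i \to j$ when $i \leq j$ and to the empty set otherwise, with composition forced; this is precisely the ordinal category $[\textbf{n}]$. Granting compatibility with the cosimplicial structure maps, one gets $\pi_{0}\Phi \cong [\textbf{--}]$ as cosimplicial categories, so that $\mathfrak{B}(iD)_{n} \cong \mathrm{hom}_{\mathrm{Cat}}([\textbf{n}], D) = B(D)_{n}$ naturally in both variables, giving $\mathfrak{B}i \cong B$ and hence the claim.

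The step I expect to need the most care is this final naturality assertion: the levelwise bijections alone are not enough, and I must verify that the isomorphisms $\pi_{0}\Phi^{n} \cong [\textbf{n}]$ are compatible with the functors induced by the coface and codegeneracy maps of $\Phi$, so that the assembled map $\mathfrak{B}i \cong B$ is a genuine natural transformation rather than an unrelated family of bijections. The remaining ingredients — the adjoint bookkeeping and the contractibility of the hom-posets $\mathcal{P}_{n}[i,j]$ — are routine.
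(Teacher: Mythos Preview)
Your proposal is correct, and it differs from the paper's argument mainly in packaging. The paper works on the left-adjoint side: it notes $P(\Delta^{n}) \cong [\textbf{n}] \cong \pi_{0}\mathfrak{C}(\Delta^{n})$ naturally in $n$ (using $\mathfrak{C}(\Delta^{n}) = \Phi^{n}$), and concludes by observing that both $P$ and $\pi_{0}\mathfrak{C}$ are left adjoints, hence determined by their values on the simplices since every simplicial set is a colimit of its $\Delta^{n}$'s. You instead compare the right adjoints $\mathfrak{B}i$ and $B$ and invoke uniqueness of left adjoints. The core computation is identical in both --- your identification $\pi_{0}\Phi^{n} \cong [\textbf{n}]$ is exactly the paper's $\pi_{0}\mathfrak{C}(\Delta^{n}) \cong [\textbf{n}]$ --- and the naturality-in-$n$ you flagged as the delicate point is precisely what the paper's phrase ``natural in $n$'' is asserting. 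Your framing has the mild advantage that the uniqueness-of-adjoints statement absorbs the naturality bookkeeping cleanly; the paper's framing is terser but leaves the density argument implicit.
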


\begin{proof}
We have isomorphisms $P(\Delta^{n}) \cong [\textbf{n}] \cong \pi_{0}\mathfrak{C}(\Delta^{n})$, natural in $n$. The result follows since a simplicial set is a colimit of its n-simplices and both $P$ and $\pi_{0}\mathfrak{C}$ are left adjoints. 
 
\end{proof}

\begin{lemma}\label{lem4.11}
$p^{*}L^{2}$ preserves sectionwise sCat-fibrations of presheaves of fibrant simplicial simplicial categories. 
\end{lemma}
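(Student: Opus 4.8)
The plan is to transport the statement across the homotopy coherent nerve and exploit that $p^{*}L^{2}$ commutes with $\mathfrak{B}$. First I would record that, since $X$ and $Y$ are presheaves of fibrant simplicial categories, $\mathfrak{B}(X)$ and $\mathfrak{B}(Y)$ are presheaves of quasi-categories and $\mathfrak{B}(f)$ is a sectionwise quasi-fibration. This is immediate from \ref{thm1.5}: $\mathfrak{B}$ is the right adjoint of a sectionwise Quillen equivalence, so it carries sectionwise sCat-fibrations between sectionwise fibrant objects to sectionwise quasi-fibrations. This is the same input already used in the proof of \ref{lem3.10} via \cite[Lemma 3.14]{Nick}.

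Next I would apply $p^{*}L^{2}$. By \ref{cor3.3} there are natural isomorphisms $\mathfrak{B}p^{*}L^{2}(f) \cong p^{*}L^{2}\mathfrak{B}(f)$, and likewise on $X$ and $Y$. Exactly as in the proof of \ref{lem3.10}, \cite[Lemma 3.14]{Nick} together with \ref{thm1.5} shows that $p^{*}L^{2}\mathfrak{B}(f)$ is a sectionwise quasi-fibration; since $p^{*}L^{2}$ preserves the property of being a sectionwise quasi-category, the objects $p^{*}L^{2}\mathfrak{B}(X)$ and $p^{*}L^{2}\mathfrak{B}(Y)$ are again presheaves of quasi-categories. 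Transporting back through \ref{cor3.3}, the map $\mathfrak{B}p^{*}L^{2}(f)$ is a sectionwise quasi-fibration between presheaves of quasi-categories, and $p^{*}L^{2}(X)$, $p^{*}L^{2}(Y)$ are presheaves of fibrant simplicial categories.

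Finally I would descend from $\mathfrak{B}p^{*}L^{2}(f)$ back to $p^{*}L^{2}(f)$, invoking that $\mathfrak{B}$ not only preserves but detects sCat-fibrations between sectionwise fibrant objects: a map $g$ of presheaves of fibrant simplicial categories is a sectionwise sCat-fibration if and only if $\mathfrak{B}(g)$ is a sectionwise quasi-fibration. Applying this with $g = p^{*}L^{2}(f)$ then completes the argument.

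The main obstacle is precisely this reflection step, since a right Quillen functor preserves fibrations but need not reflect them in general; one must use the specific structure of the Quillen equivalence of \ref{thm1.5}. I expect to discharge it by verifying the two defining conditions of an sCat-fibration for $g$ directly, reading each off from the sectionwise quasi-fibration $\mathfrak{B}(g)$ together with the comparison between the mapping spaces of $\mathfrak{B}(C)$ and the simplicial homs of $C$; the delicate point is that these comparisons must be compatible with the fibration conditions on the nose, not merely up to weak equivalence. Alternatively, one can avoid reflection altogether and check the two conditions for $p^{*}L^{2}(f)$ directly: condition (a) by showing $p^{*}L^{2}$ preserves sectionwise Kan fibrations between sheaves of Kan complexes (a local-fibration argument combined with Boolean descent in the spirit of \ref{cor3.11}), using that $p^{*}L^{2}$ commutes with the finite-limit formation of $\mathrm{Mor}$, $\mathrm{Ob}$ and their fibre products, and condition (b$'$) by combining \ref{lem3.5} with the isomorphism $p^{*}L^{2}\pi_{0} \cong L^{2}\pi_{0}p^{*}L^{2}$ established in the proof of \ref{lem3.7}.
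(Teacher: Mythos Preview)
Your overall strategy—transport through $\mathfrak{B}$ via \ref{cor3.3} and then verify the two defining conditions of an sCat-fibration for $p^{*}L^{2}(f)$—is exactly the shape of the paper's argument, and your alternative route for condition (a) is precisely what the paper does: it observes that condition (a) is equivalent to $\mathrm{Mor}(X) \to \mathrm{Mor}(Y) \times_{\mathrm{Ob}(Y)\times\mathrm{Ob}(Y)} (\mathrm{Ob}(X)\times\mathrm{Ob}(X))$ being a sectionwise Kan fibration, and that this is preserved by $p^{*}L^{2}$ directly.

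The substantive difference is in condition (b$'$). The paper does \emph{not} attempt to reflect the full sCat-fibration property through $\mathfrak{B}$, nor does it use the commutation $p^{*}L^{2}\pi_{0}\cong L^{2}\pi_{0}p^{*}L^{2}$ that you propose. Instead it extracts the isofibration condition straight from the sectionwise quasi-fibration $\mathfrak{B}p^{*}L^{2}(f)$: quasi-fibrations have the right lifting property with respect to $\Delta^{0}\to B(\pi\Delta^{1})$, so $P\mathfrak{B}p^{*}L^{2}(f)$ is a sectionwise isofibration; then the identification $P\cong \pi_{0}\mathfrak{C}$ of \ref{weirdlem}, together with the counit equivalence $\mathfrak{C}\mathfrak{B}\to\mathrm{id}$ on fibrant objects from \ref{thm1.5} (which is bijective on objects, hence induces an isomorphism after $\pi_{0}$), gives $P\mathfrak{B}p^{*}L^{2}(f)\cong \pi_{0}p^{*}L^{2}(f)$ on the nose. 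This bypasses the reflection problem you flag.

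Your proposed alternative for (b$'$) has a gap as stated: the formula $p^{*}L^{2}\pi_{0}\cong L^{2}\pi_{0}p^{*}L^{2}$ only controls the \emph{sheafified} map $L^{2}\pi_{0}p^{*}L^{2}(f)$, whereas the sectionwise condition (b$'$) concerns the presheaf map $\pi_{0}p^{*}L^{2}(f)$ itself. Since $\pi_{0}$ of a sheaf of simplicial categories need not be a sheaf, you would still owe an argument passing from the sheafification back to the presheaf level; the paper's $P\mathfrak{B}$ route avoids this entirely. So your plan is on target, but to close it you should replace the $\pi_{0}$-commutation step by the $P\cong\pi_{0}\mathfrak{C}$ identification of \ref{weirdlem}.
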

\begin{proof}
Let $f : X \rightarrow Y$ be a sectionwise sCat-fibration. 
Condition (a) on pg. 7 is equivalent to $ \mathrm{Mor}(X) \rightarrow \mathrm{Mor}(Y) \times_{\mathrm{Ob}(Y) \times \mathrm{Ob}(Y)} (\mathrm{Ob}(X) \times \mathrm{Ob}(X))$ being a sectionwise Kan fibration, which is preserved under $p^{*}L^{2}$. 

On the other hand, $\mathfrak{B}p^{*}L^{2}(f) \cong p^{*}L^{2}\mathfrak{B}(f)$ is a sectionwise quasi-fibration by \cite[Lemma 3.15]{Nick}. Thus, $P\mathfrak{B}p^{*}L^{2}(f)$ is an isofibration in each section, since quasi-fibrations have the right lifting property with respect to $\Delta^{0} \rightarrow B(\pi \Delta^{1})$ (c.f. \cite[Corollary 1.6]{Joyal1}). We have natural equivalences $$P\mathfrak{B}p^{*}L^{2}(f) \cong \pi_{0}\mathfrak{C}\mathfrak{B}p^{*}L^{2}(f) \simeq \pi_{0}p^{*}L^{2}(f)$$ by \ref{thm1.5} and \ref{weirdlem}, so condition (b') on pg. 7 is verified. 
\end{proof}

We write $\mathrm{Ex}^{\infty} : sCat\textbf{Pre}(\mathscr{C}) \rightarrow sCat\textbf{Pre}(\mathscr{C})$ for the $\mathrm{Ex}^{\infty}$ functor defined above \ref{exam1.7} applied sectionwise. 

\begin{lemma}\label{lem4.12}
Consider a diagram
$$
\xymatrix{
C \times_{B} A \ar[r] \ar[d]_{h} & A \ar[d]_{g} \\
C \ar[r]_{f} & B
}
$$
where the $A \rightarrow B$ is a local sCat-equivalence and $C \rightarrow B$ is a sectionwise sCat-fibration. Then $C \times_{B} A \rightarrow C$ is an sCat-equivalence. 

In particular, the local Bergner model structure is right proper. 
\end{lemma}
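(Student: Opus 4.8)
The plan is to reduce to a situation where I can apply the Boolean-localization machinery assembled in the earlier sections, in particular Corollary \ref{cor3.11} and Lemma \ref{lem3.9}. First I would like to arrange that all the simplicial categories in sight are sectionwise fibrant, since this is where the pair of functors $p^{*}L^{2}$ and $\mathfrak{B}$ behave best. Using the sectionwise fibrant replacement of Example \ref{exam1.7} (which is functorial and commutes with filtered colimits), together with right properness of the \emph{Bergner} model structure in each section, I expect to be able to replace $A$, $B$, $C$ by presheaves of fibrant simplicial categories without changing the local sCat-equivalence class of the relevant maps; here I would invoke \ref{lem3.10} or the fact that $\mathfrak{B}$ preserves and reflects equivalences between fibrant objects to justify that the replacements are harmless. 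The map $C \to B$ should be taken to be a sectionwise sCat-fibration throughout, which is part of the hypothesis.

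Next I would pass through Boolean localization. The key point is that $p^{*}L^{2}$ preserves sectionwise sCat-fibrations of presheaves of fibrant simplicial categories, which is exactly Lemma \ref{lem4.11}, and that it preserves and reflects local sCat-equivalences, which is Lemma \ref{lem3.7}. The crucial structural fact I need is that $p^{*}L^{2}$, being (up to the sheafification comparison) an inverse image functor of a geometric morphism, is exact and hence preserves the pullback square; I would want to note that $p^{*}L^{2}$ of the pullback $C \times_{B} A$ is again the corresponding pullback $p^{*}L^{2}(C) \times_{p^{*}L^{2}(B)} p^{*}L^{2}(A)$. Applying $p^{*}L^{2}$ to the whole diagram, and using that on the Boolean site local sCat-equivalences between sheaves of fibrant simplicial categories are the same as sectionwise ones (Corollary \ref{cor3.11}), reduces the claim to the following statement on the Boolean site: the pullback of a sectionwise sCat-equivalence along a sectionwise sCat-fibration is a sectionwise sCat-equivalence. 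But that is precisely the right properness of the Bergner model structure applied in each section. Pulling the conclusion back across $p^{*}L^{2}$ using that it \emph{reflects} local sCat-equivalences (again \ref{lem3.7}) then gives that $C \times_{B} A \to C$ is a local sCat-equivalence.

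The final sentence, right properness of the local Bergner model structure, is then essentially a restatement: one takes $A \to B$ to be an arbitrary local sCat-equivalence and $C \to B$ an sCat-injective fibration, and must reduce to the case of a sectionwise sCat-fibration. Since every sCat-injective fibration is in particular a sectionwise sCat-fibration (an sCat-injective fibration has the right lifting property against all projective cofibrations, which include the generators $\mathrm{hom}(-,U)\times(\mathcal{U}(\Lambda^{n}_{i}) \to \mathcal{U}(\Delta^{n}))$ and the maps forcing condition (b)), the first part of the lemma applies directly.

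The step I expect to be the main obstacle is controlling the interaction of $p^{*}L^{2}$ with the pullback and with the fibrant-replacement reductions simultaneously. Concretely, $p^{*}L^{2}$ is exact and so commutes with the pullback, but the fibrant replacements of Example \ref{exam1.7} are only sectionwise constructions, and I must check that after replacing $A,B,C$ the map $C \times_{B} A \to C$ I analyze is genuinely equivalent (as a local sCat-equivalence question) to the original one — i.e. that forming the pullback and replacing fibrantly can be interchanged up to local sCat-equivalence. The honest way to handle this is to first make $C \to B$ a sectionwise sCat-fibration between fibrant objects and only then form the pullback, so that no further replacement of the pullback corner is needed; verifying that this initial replacement does not disturb the hypothesis that $A \to B$ is a local sCat-equivalence is where the care is required, and is where I would lean on Lemma \ref{lem3.10}.
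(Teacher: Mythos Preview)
Your overall strategy coincides with the paper's: reduce via Boolean localization to sheaves of fibrant simplicial categories on $\mathscr{B}$, then invoke right properness of the ordinary Bergner model structure sectionwise. The final paragraph on right properness is also the paper's argument.

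The difference lies precisely at the step you flag as the main obstacle. You propose using the construction of Example~\ref{exam1.7} to achieve sectionwise fibrancy, and then worry (correctly) about whether replacing $A$, $B$, $C$ separately interacts well with the pullback. The paper sidesteps this entirely by using $\mathrm{Ex}^{\infty}$ (applied to the internal description of a simplicial category, as defined just above Example~\ref{exam1.7}) rather than the path-object construction. The point is that $\mathrm{Ex}^{\infty}$ on simplicial sets is a filtered colimit of the right adjoints $\mathrm{Ex}^{n}$ and hence preserves finite limits; applied to the internal description it therefore preserves the pullback square on the nose. Moreover $\mathrm{Ex}^{\infty}$ preserves sectionwise sCat-fibrations (it preserves $\pi_{0}$ and preserves Kan fibrations on hom-objects), so Lemma~\ref{lem4.11} applies after $\mathrm{Ex}^{\infty}$, and the composite $p^{*}L^{2}\mathrm{Ex}^{\infty}$ still preserves and reflects local sCat-equivalences. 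Thus one applies the single functor $p^{*}L^{2}\mathrm{Ex}^{\infty}$ to the entire square and lands immediately in the situation where Corollary~\ref{cor3.11} and sectionwise right properness apply; no separate replacement of $A$, $B$, $C$ and no bookkeeping about compatibility with the pullback is needed. Your route via Example~\ref{exam1.7} could perhaps be pushed through, but it would require exactly the delicate interchange argument you anticipate; $\mathrm{Ex}^{\infty}$ is the device that dissolves that difficulty.
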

\begin{proof}

$p^{*}L^{2}\mathrm{Ex}^{\infty}$ preserves pullbacks. $\mathrm{Ex}^{\infty}$ preserves sCat-fibrations, since it preserves $\pi_{0}$ and the usual  $\mathrm{Ex}^{\infty}$ for simplicial sets preserves Kan fibrations. Thus, $p^{*}L^{2}\mathrm{Ex}^{\infty}$ preserves sectionwise sCat-fibrations by \ref{lem4.11}. It also preserves and reflects local sCat-equivalences. Thus, we are reduced to assuming that all objects are sectionwise fibrant sheaves of simplicial categories on a Boolean site. The first statement follows from the right properness of the Bergner model structure. 

For the second statement, note that a fibration for the local Bergner model structure is a sectionwise sCat-fibration. 
\end{proof}

We call a map of simplicial presheaves a \textbf{projective cofibration} if and only if it is in the saturation of 
$$
\partial \Delta^{n} \times \mathrm{hom}(-, V) \rightarrow \Delta^{n} \times \mathrm{hom}(-, V),
$$
where $n$ runs over all natural numbers and $V$ runs over all objects of $\mathscr{C}$.

\begin{theorem}\label{thm4.13}
There is a model structure on $s\textbf{Pre}(\mathscr{C})$ in which
\begin{enumerate}
\item{Cofibrations are projective cofibrations.}
\item{Weak equivalences are local Joyal equivalences.}
\item{The fibrations are maps which have the right lifting property with respect to maps which are both local Joyal equivalences and projective cofibrations. We call these \textbf{projective quasi-fibrations}.}
\end{enumerate}
\end{theorem}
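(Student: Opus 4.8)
The plan is to exhibit the desired structure as a cofibrantly generated model structure and to verify the hypotheses of the standard recognition theorem for cofibrantly generated model categories, taking as generating cofibrations the set $I = \{\, \partial \Delta^{n} \times \mathrm{hom}(-,V) \to \Delta^{n} \times \mathrm{hom}(-,V)\,\}$ and, for generating trivial cofibrations, a set $\mathcal{J}$ of $\alpha$-bounded maps which are simultaneously projective cofibrations and local Joyal equivalences. The decisive structural observation is that every projective cofibration is a monomorphism, hence a cofibration for the injective local Joyal model structure of \cite{Nick}, while the weak equivalences here are exactly the weak equivalences of that structure. Consequently the relevant closure properties of the class of trivial projective cofibrations can be imported wholesale from the injective local Joyal model structure: a map that is both a projective cofibration and a local Joyal equivalence is a trivial cofibration there, so a pushout or transfinite composite of such maps is again a local Joyal equivalence (trivial cofibrations being stable under these operations) and again a projective cofibration (these forming a saturated class). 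This makes the analogue of \ref{lem4.4} immediate and, pleasantly, avoids any Boolean localization argument.

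To produce $\mathcal{J}$ and to identify the fibrations I would repeat the bounded argument used for the local Bergner structure. First I would invoke the bounded cofibration lemma for local Joyal equivalences, the analogue of \ref{lem4.5} available from the construction of the injective local Joyal model structure in \cite{Nick}: for a regular cardinal $\alpha > |\mathrm{Mor}(\mathscr{C})|$, if $X \subseteq Y$ is a monomorphism and a local Joyal equivalence and $A \subseteq Y$ is $\alpha$-bounded, then there is an $\alpha$-bounded $B$ with $A \subseteq B \subseteq Y$ and $B \cap X \to B$ a local Joyal equivalence. Exactly as in the construction of the set $\mathfrak{J}$ preceding \ref{lem4.6}, one then factors each $\alpha$-bounded map that is both a projective cofibration and a local Joyal equivalence as a projective cofibration followed by a map having the right lifting property against all projective cofibrations, and lets $\mathcal{J}$ collect the resulting projective-cofibration parts. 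The key step is the analogue of \ref{lem4.6} and \ref{lem4.7}: a local Joyal equivalence with the right lifting property against every member of $\mathcal{J}$ in fact has the right lifting property against every projective cofibration. This is the main obstacle, and it is precisely where the bounded cofibration lemma does its work; granting it, the projective quasi-fibrations are exactly the maps with the right lifting property against $\mathcal{J}$.

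The remaining hypotheses of the recognition theorem are then routine. By the Yoneda and hom-tensor adjunctions, the maps with the right lifting property against $I$ are exactly the sectionwise trivial Kan fibrations; since the cofibrations of the Joyal model structure are the monomorphisms, such a map is also a sectionwise trivial Joyal fibration, hence a sectionwise and therefore local Joyal equivalence, so that $I$-injectives are weak equivalences. Every member of $\mathcal{J}$ lies in the saturation of $I$, which forces the right lifting property against $I$ to imply the right lifting property against $\mathcal{J}$, and the $\mathcal{J}$-cell complexes are trivial projective cofibrations by the inheritance argument of the first paragraph. Together with the two-out-of-three property and retract-closure of local Joyal equivalences and the evident smallness of the domains appearing in $I$ and $\mathcal{J}$, these verify all the hypotheses of the recognition theorem and yield the model structure whose cofibrations are the projective cofibrations, whose weak equivalences are the local Joyal equivalences, and whose fibrations are the maps with the right lifting property against $\mathcal{J}$, namely the projective quasi-fibrations. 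I expect the only genuinely hard input to be the bounded cofibration lemma together with its consequence in the analogue of \ref{lem4.6}; everything else is either inherited from the injective local Joyal structure or purely formal.
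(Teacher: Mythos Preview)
Your proposal is correct, but it takes a substantially longer road than the paper does. You set out to verify the recognition theorem from scratch: you build a bounded generating set $\mathcal{J}$ for the trivial projective cofibrations and then prove the analogue of \ref{lem4.6} to identify the fibrations. All of this works, and you are right that the only nontrivial input is the bounded cofibration lemma for local Joyal equivalences from \cite{Nick}. But the paper sidesteps this entirely by exploiting the very observation you already made---that every projective cofibration is a monomorphism, hence a cofibration in the already-existing injective local Joyal model structure. Rather than using this only to check closure properties of trivial projective cofibrations, the paper uses it to build the required factorizations directly: factor $f$ in the injective local Joyal structure as a trivial cofibration $h$ followed by a quasi-injective fibration $g$ (automatically a projective quasi-fibration), then factor $h$ in the global projective Joyal structure as a projective cofibration $m$ followed by a map $l$ with the right lifting property against all projective cofibrations (hence a sectionwise trivial Joyal fibration, hence a local Joyal equivalence and a projective quasi-fibration). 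Then $m$ is a trivial projective cofibration by two-out-of-three and $g\circ l$ is a projective quasi-fibration, giving CM5; CM4 follows by retract. No bounded argument, no generating set $\mathcal{J}$, no analogue of \ref{lem4.6}. What your approach buys is an explicit set of generating trivial cofibrations, which the paper's slick argument does not produce; what the paper's approach buys is a two-line proof that leans entirely on model structures already in hand.
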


\begin{proof}

CM1-CM3 are trivial. Factorize a map $f = g \circ h$, where $g$ is an quasi-injective fibration (thus a projective quasi-fibration) and $h$ is a monomorphism and local Joyal equivalence. Then factor $h = l \circ m$, where $l$ has the right lifting property with respect to projective cofibrations (and is hence a projective quasi-fibration) and $m$ is a projective cofibration. The factorization $( g \circ l) \circ m$ gives one half of CM5. The other half is trivial. 

CM4 follows from a standard retract argument. 
\end{proof}

\begin{lemma}\label{lem4.14}
The identity map 
$$
i : s\textbf{Pre}(\mathscr{C}) \leftrightarrows s\textbf{Pre}(\mathscr{C}) : i
$$
is a Quillen equivalence from the local projective Joyal model structure to the (usual) local Joyal model structure. 
\end{lemma}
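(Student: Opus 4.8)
The plan is to show that the identity functor, regarded as a left adjoint from the local projective Joyal model structure (\ref{thm4.13}) to the (usual) local Joyal model structure, is a left Quillen functor, and then to observe that it is automatically a Quillen equivalence because the two model structures share the same class of weak equivalences. Since everything takes place on a single underlying category with the identity in both directions, the only genuine content is a comparison of the cofibrations.

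First I would check that the identity preserves cofibrations. The cofibrations of the local Joyal model structure are the monomorphisms, so it suffices to show that every projective cofibration of simplicial presheaves is a monomorphism. Each generating projective cofibration $\partial \Delta^{n} \times \mathrm{hom}(-, V) \rightarrow \Delta^{n} \times \mathrm{hom}(-, V)$ is a monomorphism, and the class of monomorphisms of simplicial presheaves is closed under pushout, transfinite composition, and retract; hence the entire saturation consists of monomorphisms. Next I would verify preservation of trivial cofibrations: both model structures have the local Joyal equivalences as weak equivalences, so a map that is both a projective cofibration and a local Joyal equivalence is sent to a monomorphism that is a local Joyal equivalence, i.e. a trivial cofibration of the local Joyal model structure. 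This establishes that the identity is left Quillen.

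Finally, to upgrade this Quillen adjunction to a Quillen equivalence, I would invoke the standard criterion: an adjunction $L \dashv R$ is a Quillen equivalence if and only if, for every cofibrant $A$ and fibrant $Y$, a map $LA \rightarrow Y$ is a weak equivalence precisely when its adjoint $A \rightarrow RY$ is. Here $L$ and $R$ are both the identity functor, so $LA = A$ and $RY = Y$, and a map together with its adjoint is literally the same map; since the two model structures have identical weak equivalences, the criterion holds trivially. I do not expect a serious obstacle: the entire argument reduces to the elementary fact that projective cofibrations are monomorphisms, which makes the identity left Quillen, after which the coincidence of the weak equivalences forces the Quillen equivalence.
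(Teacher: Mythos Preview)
Your argument is correct and is exactly the routine unpacking of the word the paper uses for its proof: ``Trivial.'' The paper gives no further detail, and your observation that projective cofibrations are monomorphisms together with the coincidence of weak equivalences is precisely what makes the statement immediate.
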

\begin{proof}
Trivial. 
\end{proof}

We write $\mathcal{L}_{Berg} : sCat\textbf{Pre}(\mathscr{C}) \rightarrow sCat\textbf{Pre}(\mathscr{C})$ for the functorial fibrant replacement for the local Bergner model structure. There is also a functor $\mathcal{S}_{Berg}$ which is obtained by applying the fibrant replacement functor for the Bergner model structure sectionwise to a presheaf of simplicial categories. 

\begin{theorem}\label{thm4.15}
There is a Quillen equivalence 
$$
\mathfrak{C} : s\textbf{Pre}(\mathscr{C}) \leftrightarrows sCat\textbf{Pre}(\mathscr{C}) : \mathfrak{B}
$$
from the local projective Joyal model structure to the local Bergner model structure. 
\end{theorem}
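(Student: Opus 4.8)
The plan is to verify that $(\mathfrak{C}, \mathfrak{B})$ is a Quillen adjunction and then promote it to a Quillen equivalence via the standard criterion that the right adjoint reflects weak equivalences between fibrant objects while the derived unit is a weak equivalence on cofibrant objects. For the adjunction, I would first check that $\mathfrak{C}$ preserves cofibrations. Since $\mathfrak{B}$ is applied sectionwise, its left adjoint $\mathfrak{C}$ is as well, and being a left adjoint it preserves colimits and coproducts; hence $\mathfrak{C}(\partial\Delta^n \times \mathrm{hom}(-,V)) \cong \mathfrak{C}(\partial\Delta^n)\times\mathrm{hom}(-,V)$, and similarly for $\Delta^n$. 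It therefore suffices to check the generating projective cofibrations, which map to $\mathfrak{C}(\partial\Delta^n)\times\mathrm{hom}(-,V)\to\mathfrak{C}(\Delta^n)\times\mathrm{hom}(-,V)$. By \ref{thm1.5} the map $\mathfrak{C}(\partial\Delta^n)\to\mathfrak{C}(\Delta^n)$ is a Bergner cofibration, and crossing a Bergner cofibration with $\mathrm{hom}(-,V)$ lands in the saturation of the generating projective cofibrations $\mathrm{hom}(-,V)\times\phi$, so it is a projective cofibration of presheaves of simplicial categories.

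Next I would show $\mathfrak{C}$ preserves trivial cofibrations; since cofibrations are already handled, it is enough to see that $\mathfrak{C}$ sends local Joyal equivalences to local sCat-equivalences. Here I would argue by Boolean localization. By \ref{lem3.7}, $p^{*}L^{2}$ reflects local sCat-equivalences, so it suffices to treat $p^{*}L^{2}\mathfrak{C}(f)$. Using that $\mathfrak{C}$ commutes with Boolean localization, i.e. a natural isomorphism $p^{*}L^{2}\mathfrak{C} \cong \mathfrak{C}p^{*}L^{2}$ (which follows from the finiteness of the cosimplicial object $\Phi$ by the methods of \ref{lem3.2} and \ref{cor3.3}), I reduce to showing that $\mathfrak{C}$ carries a local Joyal equivalence on the Boolean site $\mathscr{B}$ to a local sCat-equivalence. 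On $\mathscr{B}$ a local Joyal equivalence is a sectionwise Joyal equivalence (the Joyal analogue of \ref{cor3.11}, cf. \cite{Nick}), and $\mathfrak{C}$ preserves sectionwise Joyal equivalences by \ref{thm1.5} together with Ken Brown's lemma, since every simplicial set is Joyal-cofibrant. A sectionwise sCat-equivalence is a local sCat-equivalence by \ref{rmk2.7}, which completes the verification of the Quillen adjunction.

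For the Quillen equivalence, I would first observe that the fibrant objects of the local Bergner model structure are presheaves of fibrant simplicial categories, since every sCat-injective fibration is in particular a sectionwise sCat-fibration (as used in \ref{lem4.12}). Consequently \ref{lem3.10}, combined with the identification of weak equivalences in \ref{lem4.14}, shows that $\mathfrak{B}$ both preserves and reflects weak equivalences between fibrant objects. It then remains to prove that for cofibrant $A \in s\textbf{Pre}(\mathscr{C})$ the derived unit $A \to \mathfrak{B}(\mathcal{L}_{Berg}\mathfrak{C}(A))$ is a local Joyal equivalence, and this is where the two fibrant replacements $\mathcal{S}_{Berg}$ and $\mathcal{L}_{Berg}$ are used in tandem. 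Sectionwise, the composite $A(U)\to\mathfrak{B}\mathfrak{C}(A(U))\to\mathfrak{B}(\mathcal{S}_{Berg}\mathfrak{C}(A)(U))$ is precisely the derived unit of the Quillen equivalence of \ref{thm1.5} at the cofibrant object $A(U)$, hence a Joyal equivalence; thus $A\to\mathfrak{B}(\mathcal{S}_{Berg}\mathfrak{C}(A))$ is a sectionwise, and therefore local, Joyal equivalence.

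To transfer this to $\mathcal{L}_{Berg}$, I would apply $\mathcal{L}_{Berg}$ to the sectionwise trivial cofibration $\mathfrak{C}(A)\to\mathcal{S}_{Berg}\mathfrak{C}(A)$ and use naturality of the unit $\mathrm{id}\to\mathcal{L}_{Berg}$ to obtain a commuting square of presheaves of fibrant simplicial categories whose four maps are local sCat-equivalences; applying $\mathfrak{B}$ and invoking \ref{lem3.10} then yields local Joyal equivalences relating $\mathfrak{B}(\mathcal{S}_{Berg}\mathfrak{C}(A))$, $\mathfrak{B}(\mathcal{L}_{Berg}\mathfrak{C}(A))$ and $\mathfrak{B}(\mathcal{L}_{Berg}\mathcal{S}_{Berg}\mathfrak{C}(A))$, so that a two-out-of-three chase gives that the derived unit is a local Joyal equivalence. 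The main obstacle I expect is twofold: establishing the commutation of $\mathfrak{C}$ with Boolean localization in the second step, and the careful bookkeeping in this last step, since \ref{thm1.5} only provides a weak equivalence after fibrant replacement and \ref{lem3.10} only applies between presheaves of fibrant simplicial categories, so the comparison of $\mathcal{S}_{Berg}$ and $\mathcal{L}_{Berg}$ must be arranged entirely within the fibrant world.
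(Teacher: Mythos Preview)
Your overall architecture (verify the Quillen adjunction, then check the derived unit using the interplay of $\mathcal{S}_{Berg}$ and $\mathcal{L}_{Berg}$ together with \ref{lem3.10}) matches the paper's, and your treatment of cofibrations and of the derived unit is essentially the same argument the paper gives. The paper differs from you in that it also checks the derived counit directly: for $X$ local-Bergner fibrant, $\mathfrak{C}\mathfrak{B}(X)\to X$ is a sectionwise sCat-equivalence by \ref{thm1.5}.

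The substantive gap is in your argument that $\mathfrak{C}$ preserves local Joyal equivalences. Two of the steps do not go through as stated. First, your justification for $p^{*}L^{2}\mathfrak{C}\cong\mathfrak{C}p^{*}L^{2}$ via ``finiteness of $\Phi$ by the methods of \ref{lem3.2} and \ref{cor3.3}'' is the wrong mechanism: those results exploit that $\mathfrak{B}=\mathrm{hom}(\Phi^{\bullet},-)$ is a finite limit in each degree, which is why $p^{*}L^{2}$ commutes with it; $\mathfrak{C}$ is a colimit over the simplex category of a \emph{presheaf}, and that indexing changes under sheafification, so no analogous finiteness argument applies. Second, and more seriously, the assertion that on $\mathscr{B}$ a local Joyal equivalence is a sectionwise Joyal equivalence is only available for sheaves of quasi-categories (this is exactly the fibrancy hypothesis in \ref{cor3.11} and in the cited result from \cite{Nick}). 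After applying $p^{*}L^{2}$ to an arbitrary simplicial presheaf you get a simplicial sheaf, not a sheaf of quasi-categories, so you cannot invoke that result.

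The paper sidesteps both issues by never Boolean-localizing $\mathfrak{C}$ at all. Given $f:X\to Y$ a local Joyal equivalence, it forms the naturality square for the sectionwise derived unit
\[
\xymatrix{
X \ar[r]\ar[d]_{f} & \mathfrak{B}\mathcal{S}_{Berg}\mathfrak{C}(X) \ar[d]^{\mathfrak{B}\mathcal{S}_{Berg}\mathfrak{C}(f)}\\
Y \ar[r] & \mathfrak{B}\mathcal{S}_{Berg}\mathfrak{C}(Y),
}
\]
whose horizontals are sectionwise Joyal equivalences by \ref{thm1.5}; two-out-of-three then makes the right vertical a local Joyal equivalence between presheaves of \emph{fibrant} simplicial categories, so \ref{lem3.10} applies directly to conclude that $\mathcal{S}_{Berg}\mathfrak{C}(f)$, and hence $\mathfrak{C}(f)$, is a local sCat-equivalence. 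This is exactly the ``stay in the fibrant world'' maneuver you carried out for the derived unit, and using it here as well closes the gap in your argument.
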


\begin{proof}
Clearly, $\mathfrak{C}$ takes generating cofibrations to cofibrations.  We want to show that $\mathfrak{C}$ sends local Joyal equivalences $f: X \rightarrow Y$ to local sCat-equivalences.
We have a diagram
$$
\xymatrix
{
X \ar[d]_{f} \ar[r] & \mathfrak{B} \mathcal{S}_{Berg} \mathfrak{C}(X) \ar[d]^{\mathfrak{B} \mathcal{S}_{Berg} \mathfrak{C}(f)} \\
Y \ar[r] & \mathfrak{B} \mathcal{S}_{Berg} \mathfrak{C}(Y)
}
$$
The horizontal maps are sectionwise Joyal equivalence by \ref{thm1.5}. By 2 out of 3, the right vertical map is a local Joyal equivalence. Thus, $\mathcal{S}_{Berg}\mathfrak{C}(f)$ and $\mathfrak{C}(f)$ are local sCat-equivalences by \ref{lem3.10}. 

Thus, the adjunction is a Quillen adjunction. 

Let $X$ be a fibrant object in the local Bergner model structure. Then $X$ is a presheaf of fibrant simplicial categories. Thus, $\mathfrak{C} \mathfrak{B}(X) \rightarrow X$ is a sectionwise sCat-equivalence by \ref{thm1.5}. 

Let $X$ be a simplicial presheaf. We have a commutative diagram
$$
\xymatrix{
X \ar[r]  \ar[d]_{id} & \mathfrak{B} \mathfrak{C}X \ar[r] \ar[d] & \mathfrak{B}\mathcal{S}_{Berg}\mathfrak{C}(X) \ar[d]_{\psi} \\
X \ar[r]_>>>>>{\gamma} & \mathfrak{B} \mathcal{L}_{Berg}\mathfrak{C}(X) \ar[r]_>>>>>{\phi} & \mathfrak{B}\mathcal{S}_{Berg}\mathcal{L}_{Berg}\mathfrak{C}(X) 
}
$$
The top horizontal composite is a sectionwise Joyal equivalence by \ref{thm1.5}. Moreover, by \ref{lem3.10}, $\phi, \psi$ are local Joyal equivalences. Thus, by two out of three, so is $\gamma$, as required. 
\end{proof}

\begin{remark}\label{rmk4.16}

The preceding two results show that there are Quillen equivalences relating all three models of local higher category theory.
\end{remark}

\section{The Homotopy Classification of Torsors}

\begin{definition}\label{def5.1}
\normalfont
Suppose that we have  model category $M$. For $X, Y \in \mathrm{Ob}(M)$ there is a category $h(X, Y)_{M}$ in which the objects are \textbf{cocycles}, i.e. diagrams 
$$
X \xleftarrow{f} A \xrightarrow{g} Y,
$$
where $f$ a weak equivalence. One writes $(f, g)$ for the cocycle depicted above. The morphisms in $h(X, Y)_{M}$ are commutative diagrams 
$$
\xymatrix
{
& A \ar[dl]_{f} \ar[dd] \ar[dr]^{g} & \\
X & & Y \\
& A' \ar[ul]^{f'} \ar[ur]_{g'} &
}
$$

\end{definition}

The following is \cite[Theorem 6.5]{local}.
\begin{theorem}\label{thm5.2}
Suppose that we have a model category $M$ such that
\begin{enumerate}
\item{Finite products preserve weak equivalences.}
\item{$M$ is right proper.}
\end{enumerate}
Then the natural map $\pi_{0}h(X, Y)_{M} \rightarrow [X, Y]_{M}$
defined by $$(X \xleftarrow{f} A \xrightarrow{g} Y) \mapsto g \circ f^{-1} $$
is a bijection. 
\end{theorem}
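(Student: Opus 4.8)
The plan is to verify that $\phi$ descends to path components, reduce to the case of a cofibrant source and fibrant target, and then analyse that case directly; I expect the injectivity in the reduced case to be the real work, and the place where hypothesis (1) is indispensable.

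First I would check well-definedness. A morphism of cocycles $(f,g) \to (f',g')$ is a map $h \colon A \to A'$ with $f = f'h$ and $g = g'h$; since $f$ and $f'$ are weak equivalences, $h$ is one too by two-out-of-three, so $h$ becomes invertible in the homotopy category and $g f^{-1} = g' h (f'h)^{-1} = g' f'^{-1}$. Hence $\phi$ is constant on connected components and induces a function $\pi_{0}h(X,Y)_{M} \to [X,Y]_{M}$. Next I would reduce to the situation where $X$ is cofibrant and $Y$ is fibrant. For a weak equivalence $\alpha \colon X' \to X$ the assignment $(f,g) \mapsto (\alpha f, g)$ defines a functor $h(X',Y)_{M} \to h(X,Y)_{M}$, and dually a weak equivalence $\beta \colon Y \to Y'$ gives $(f,g) \mapsto (f,\beta g)$. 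I would show that each of these induces a bijection on $\pi_{0}$: for surjectivity and injectivity one factors $\alpha$ (respectively $\beta$) as a trivial cofibration followed by a fibration and forms the evident pullback, and here \emph{right properness} (hypothesis (2)) guarantees that the pulled-back leg of a cocycle is again a weak equivalence. Since $\phi(f,\beta g) = \beta\circ\phi(f,g)$ and $\phi(\alpha f,g) = \phi(f,g)\circ[\alpha]^{-1}$, these $\pi_{0}$-bijections are compatible with the induced isomorphisms on $[-,-]_{M}$ and with $\phi$. Applying this to a cofibrant replacement $\tilde X \to X$ and a fibrant replacement $Y \to \hat Y$ therefore reduces the theorem to the case where $X$ is cofibrant and $Y$ is fibrant.

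In that case $[X,Y]_{M}$ is the set of right-homotopy classes of maps $X \to Y$. Surjectivity is immediate, since a map $u \colon X \to Y$ yields the cocycle $(1_{X},u)$ with $\phi(1_{X},u) = [u]$. For injectivity I would first connect an arbitrary cocycle $(f,g)$ to one of the standard form $(1_{X},u)$: factoring $f$ as a cofibration followed by a trivial fibration and using that $X$ is cofibrant produces a section of that trivial fibration, which is exactly a cocycle morphism exhibiting the desired connection, with $[u] = g f^{-1}$. It then remains to see that $(1_{X},u_{0})$ and $(1_{X},u_{1})$ lie in the same component whenever $u_{0} \simeq u_{1}$. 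A homotopy is a map $X \to Y^{I}$ into a path object with endpoint weak equivalences $d_{0},d_{1}\colon Y^{I}\to Y$; it determines a cocycle in $h(X,Y^{I})_{M}$ whose images under the two endpoint maps are $(1_{X},u_{0})$ and $(1_{X},u_{1})$. Because $d_{0}$ and $d_{1}$ share a common section and hence agree in the homotopy category, the two pushforwards on $\pi_{0}$ coincide, forcing the two cocycles into the same component.

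The main obstacle is this last step, and it is exactly where hypothesis (1) is needed: to run the path-object argument one must know that the relevant evaluation and projection maps built out of products — the endpoint maps of $Y^{I}$ and the comparison maps arising when the $u_{i}$ are connected through $Y^{I}$ — are weak equivalences, which is guaranteed precisely by the closure of weak equivalences under finite products. The reduction step's reliance on right properness, by contrast, is routine. Assembling the bijection in the reduced case with the reduction isomorphisms then yields the theorem.
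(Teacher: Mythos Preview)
The paper does not actually prove this theorem: it simply records it as \cite[Theorem 6.5]{local}, so there is no in-paper argument to compare against. Measured against Jardine's proof (the cited one), your architecture is different---Jardine argues surjectivity and injectivity directly for arbitrary $X,Y$ rather than first reducing to $X$ cofibrant and $Y$ fibrant---but the real issue is that your sketch has a genuine gap, and it is precisely in the step you call ``routine''.

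Your reduction step requires that a weak equivalence $\beta\colon Y\to Y'$ induce a bijection $\beta_*\colon\pi_0 h(X,Y)\to\pi_0 h(X,Y')$. You propose to factor $\beta$ as a trivial cofibration $i$ followed by a trivial fibration $p$ and handle each by ``the evident pullback''. The $p$-half is fine. But for the trivial cofibration $i\colon Y\to Z$, given a cocycle $(f\colon A\to X,\ h\colon A\to Z)$ you must produce one landing in $Y$; pulling back along $i$ needs the map into $Z$ to be a fibration, and replacing $(f,h)$ by a fibration $(p,q)\colon V\to X\times Z$ only forces $q=\mathrm{pr}_Z\circ(p,q)$ to be a fibration when $\mathrm{pr}_Z\colon X\times Z\to Z$ is one, i.e.\ when $X$ is fibrant. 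So right properness alone does not close this case, contrary to your claim. (The same issue resurfaces in your endgame: your argument that $(d_0)_*=(d_1)_*$ on $\pi_0$ rests on $s_*$ being a bijection, which is another instance of this same unproven invariance lemma with $\beta=s\colon Y\to Y^I$.)

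Relatedly, your identification of where hypothesis~(1) enters is off. The endpoint maps $d_0,d_1\colon Y^I\to Y$ are weak equivalences by the definition of a path object; no product hypothesis is needed there, and your path-object step, as written, never actually invokes a product. In Jardine's argument, closure of weak equivalences under products is used exactly in the comparison/invariance step you have deferred: one needs maps like $f_1\times f_2\colon U_1\times U_2\to X\times X$ (or $f\times 1\colon U\times U'\to X\times U'$) to remain weak equivalences in order to build the connecting cocycles. So the weight of the proof sits in the part you dismissed, not in the final homotopy step.
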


We write $\mathrm{sGpd}$ for the category of simplicial groupoids. 
\begin{theorem}\label{thm5.3} (\cite[Theorem V.7.7]{GJ2})
There is a model structure on $\mathrm{sGpd}$ in which the weak equivalences are maps $f$ such that they induce weak equivalences on simplicial homs and $\pi_{0}(f_{0})$ is a surjection. The fibrations are maps that induce Kan fibrations on simplicial homs and satisfy the path lifting property. 
\end{theorem}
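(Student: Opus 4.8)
The plan is to obtain the model structure by transfer along the Dwyer--Kan adjunction
$$G : \mathrm{sSet} \leftrightarrows \mathrm{sGpd} : \overline{W},$$
where $\overline{W}$ is the classifying space functor for simplicial groupoids and $G$ is its left adjoint, the loop groupoid functor. One declares a map $f$ in $\mathrm{sGpd}$ to be a weak equivalence (respectively fibration) precisely when $\overline{W}(f)$ is a weak equivalence (respectively Kan fibration) in the standard model structure on $\mathrm{sSet}$, and takes the cofibrations to be the maps with the left lifting property against the trivial fibrations. The existence of the structure then follows from the standard transfer theorem for cofibrantly generated model categories, with generating cofibrations and generating trivial cofibrations obtained by applying $G$ to the maps $\partial \Delta^n \to \Delta^n$ and $\Lambda^n_i \to \Delta^n$ of $\mathrm{sSet}$. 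Observe that the resulting weak equivalences and fibrations are exactly the conditions of the statement restricted to the groupoid case, paralleling conditions (a) and (b$'$) in the definition of the sCat-fibrations recalled in Section 2.

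First I would record that $\mathrm{sGpd}$ is complete, cocomplete and locally presentable, so that the small object argument applies and the domains of the generating maps are small; together with the adjunction this yields the two functorial factorizations. The substantive hypothesis of the transfer theorem is the acyclicity condition, namely that every pushout of a transferred generating trivial cofibration be a weak equivalence. Here the key simplification is that every object of $\mathrm{sGpd}$ is fibrant in the candidate structure: each endomorphism space $\mathrm{hom}_{G}(x,x)$ is a simplicial group, and each $\mathrm{hom}_{G}(x,y)$ is either empty or a torsor over it, hence a Kan complex by Moore's theorem, while path lifting against the terminal groupoid is automatic. Consequently acyclicity can be verified by exhibiting a natural path object in $\mathrm{sGpd}$ and applying Quillen's standard criterion.

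The main work, and the step I expect to be the principal obstacle, is matching the transferred notions with the intrinsic descriptions in the statement. For weak equivalences I would use the computation of the homotopy type of $\overline{W}(G)$: its higher homotopy groups recover those of the hom-spaces of $G$, and $\pi_{0}\overline{W}(G)$ recovers the set of isomorphism classes of $\pi_{0}(G_{0})$, so that $\overline{W}(f)$ is a weak equivalence if and only if $f$ induces weak equivalences on simplicial homs and $\pi_{0}(f_{0})$ is surjective. For fibrations one must show that $\overline{W}(f)$ is a Kan fibration if and only if $f$ induces Kan fibrations on the simplicial homs and has the path lifting property; this is the delicate point, requiring a horn-by-horn analysis of $\overline{W}$ via the Kan loop group construction in order to see that a lifting problem for a horn in $\overline{W}(f)$ decomposes into lifting problems in the hom-spaces together with the lifting of $1$-arrows along $f_{0}$. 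Once both characterizations are established, the cofibrations are determined by the transfer and the model category axioms hold by construction.
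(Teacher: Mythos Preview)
The paper does not give its own proof of this theorem: it is simply quoted from \cite[Theorem V.7.7]{GJ2} as a known result, with no argument supplied. So there is nothing in the paper to compare your proposal against.

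That said, your outline is essentially the argument carried out in the cited reference. Goerss--Jardine establish the model structure on $\mathrm{sGpd}$ via the loop groupoid/classifying space adjunction $G \dashv \overline{W}$, defining fibrations and weak equivalences through $\overline{W}$ and then identifying them with the intrinsic conditions on simplicial homs and path lifting; your sketch follows this route. Two minor remarks. First, the acyclicity verification in the source is handled somewhat more directly than by invoking Quillen's path-object criterion, but your observation that every simplicial groupoid is fibrant (Kan hom-spaces via Moore's theorem, path lifting to the terminal object trivially) does make that criterion available and is a clean way to organise the argument. Second, the ``delicate point'' you flag---matching the transferred fibrations with the intrinsic description---is indeed where the work lies, and it is precisely this horn-by-horn analysis of $\overline{W}$ that occupies the bulk of the proof in \cite{GJ2}; you have correctly located the substantive content.
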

For maps of simplicial groupoids, the path lifting property is equivalent to condition (b) in the definition of sCat-fibration, so that the fibrations coincide with the fibrations for the Bergner model structure. 

The weak equivalences are exactly the weak equivalences for the Bergner model structure. If $f : X \rightarrow Y$ is a map of simplicial groupoids $\pi_{0}(f)$ is essentially surjective if and only if $\pi_{0}\pi_{0}(f)$ is surjective. Now, if $G$ is a groupoid, then $a$ and $b$ lie in the same path component of $\pi_{0}(G)$ if and only if $\mathrm{hom}_{\pi_{0}G}(a, b) \neq \emptyset $. It follows that $\pi_{0}(G_{n}) \cong \pi_{0}(G_{0}) \cong \pi_{0}(\pi_{0}(G))$ for $n \ge 0$.
\\

We write $sGpd\textbf{Pre}(\mathscr{C})$ for the presheaves of simplicial groupoids on the site $\mathscr{C}$. We have the following local analogue of \ref{thm5.3}

\begin{theorem}\label{thm5.4}
(c.f \cite[Theorem 9.50, Lemma 9.52]{local}).
 There is a model structure on $sGpd\textbf{Pre}(\mathscr{C})$ defined as follows.
\begin{enumerate}
\item{The weak equivalences are maps $f: X \rightarrow Y$
such that 
\begin{enumerate}
\item{$f$ satisfies condition (1) of \ref{def2.4}.}
\item{$\pi_{0}(X_{0}) \rightarrow \pi_{0}(Y_{0}) $ is a local epimorphism.}
\end{enumerate}

}
\item{ A map $f$ is a fibration if and only if $\bar{W}(f)$ is an injective fibration.}
\item{The cofibrations are maps which have the left lifting property with respect to trivial fibrations.} 
\end{enumerate}

\end{theorem}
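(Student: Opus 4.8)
The plan is to realize this model structure as the one transferred from the injective model structure on $s\textbf{Pre}(\mathscr{C})$ along the presheaf-level Dwyer--Kan adjunction
$$
G : s\textbf{Pre}(\mathscr{C}) \leftrightarrows sGpd\textbf{Pre}(\mathscr{C}) : \bar{W},
$$
where $\bar{W}$ is the classifying space functor appearing in \ref{thm5.4}(2), applied sectionwise, and $G$ is its left adjoint, the sectionwise loop groupoid functor. With this adjunction in hand I would \emph{define} $f$ to be a fibration (respectively, a weak equivalence) exactly when $\bar{W}(f)$ is an injective fibration (respectively, a local weak equivalence) of simplicial presheaves, take the cofibrations to be the maps with the left lifting property against trivial fibrations, and take the generating (trivial) cofibrations of the putative structure to be the images under $G$ of the generating (trivial) cofibrations of the injective model structure, which is cofibrantly generated by \cite[Chapters 4 and 5]{local}.

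Before invoking transfer I would reconcile the two descriptions of the weak equivalences. The key input is the computation of the homotopy type of $\bar{W}$: for a presheaf of simplicial groupoids $H$ there is a natural identification $\pi_{0}\bar{W}(H) \cong \pi_{0}\pi_{0}(H)$ and, based at an object $x$, $\pi_{n}(\bar{W}(H), x) \cong \pi_{n-1}\mathrm{hom}_{H}(x, x)$ for $n \geq 1$. Feeding this through Boolean localization, $\bar{W}(f)$ is a local weak equivalence precisely when the simplicial hom presheaves are matched up over $\mathrm{Ob} \times \mathrm{Ob}$ (which by \ref{rmk2.6} is exactly condition (1) of \ref{def2.4}, i.e. \ref{thm5.4}(1)(a)) and the map on associated sheaves of $\pi_{0}\pi_{0}(X) \to \pi_{0}\pi_{0}(Y)$ is a bijection. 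Using the identity $\pi_{0}(G_{n}) \cong \pi_{0}(\pi_{0}(G))$ recorded after \ref{thm5.3}, together with the observation that local injectivity of this $\pi_{0}$-map is automatic once \ref{thm5.4}(1)(a) holds (fully faithfulness plus essential surjectivity gives a local equivalence), the remaining condition reduces exactly to local surjectivity of $\pi_{0}(X_{0}) \to \pi_{0}(Y_{0})$, which is \ref{thm5.4}(1)(b).

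With the weak equivalences identified, the existence of the model structure follows from the standard transfer theorem (Kan's lemma on transfer of cofibrantly generated structures) along $G \dashv \bar{W}$. The two hypotheses to verify are that the sets $G(I)$ and $G(J)$ permit the small object argument---immediate, since $G$ is a left adjoint and $sGpd\textbf{Pre}(\mathscr{C})$ is locally presentable---and the \emph{acyclicity condition}, that every relative $G(J)$-cell complex is a weak equivalence. The acyclicity step is the main obstacle. I would discharge it by Boolean localization: one checks, as in \ref{cor3.3}, that $G$ and $\bar{W}$ commute with $p^{*}L^{2}$ up to natural isomorphism, and $p^{*}L^{2}$ preserves and reflects local weak equivalences, so one is reduced to a sheaf of simplicial groupoids on the Boolean site $\mathscr{B}$, where local weak equivalences are sectionwise in the manner of \ref{cor3.11}. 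On $\mathscr{B}$ the claim follows sectionwise from the global Goerss--Jardine model structure of \ref{thm5.3}, in which $G$ carries trivial cofibrations to weak equivalences.

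Finally, the cofibration description in \ref{thm5.4}(3) is forced by the transfer, and CM1--CM3 together with the functorial factorizations of CM5 are formal consequences of cofibrant generation. Right properness, needed to apply the cocycle machinery of \ref{thm5.2}, would then be checked exactly as in \ref{lem4.12}: pulling back a weak equivalence along a fibration and reducing via $p^{*}L^{2}$ to sheaves on $\mathscr{B}$, one invokes the right properness of the global structure.
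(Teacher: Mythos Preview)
The paper does not prove \ref{thm5.4}; it simply imports it from \cite[Theorem 9.50, Lemma 9.52]{local}. So there is no proof in the paper to compare your proposal against. That said, your transfer-along-$\bar{W}$ outline is essentially the strategy used in the cited reference, and the identification of the weak equivalences via the homotopy groups of $\bar{W}$ is the right reconciliation step.

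Two technical points in your sketch would need more care if you were to write this out in full. First, the claim that the loop groupoid functor commutes with $p^{*}L^{2}$ is not of the same shape as \ref{cor3.3}: that corollary treats the \emph{right} adjoint $\mathfrak{B}$, which is a singular functor built from finite hom-sets and therefore commutes with the finite-limit-preserving $p^{*}L^{2}$. The analogous statement for $\bar{W}$ goes through for the same reason, but the left adjoint $G$ (loop groupoid) is not obviously compatible with sheafification in the same way; fortunately your acyclicity argument only genuinely needs $\bar{W}$ to commute, since the condition to verify is that $\bar{W}$ of a relative $G(J)$-cell is a local weak equivalence. Second, after Boolean localization you invoke a sectionwise reduction ``in the manner of \ref{cor3.11}'', but that corollary is stated for fibrant objects, and the intermediate stages of a cell attachment need not be fibrant; in \cite{local} this is handled instead by a bounded cofibration argument rather than a direct sectionwise reduction. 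None of this is fatal to your approach, but the acyclicity step as written is a sketch rather than a proof.
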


The weak equivalences in this model structure are exactly the local sCat-equivalences between presheaves of simplicial groupoids.

\begin{theorem}\label{thm5.5} (\cite[Theorem 9.50]{local})
There is a Quillen equivalence 
$$
\mathcal{G} : s\textbf{Pre}(\mathscr{C}) \leftrightarrows sGpd\textbf{Pre}(\mathscr{C}): \bar{W}
$$
between the injective model structure and the model structure of \ref{thm5.4}, where $\bar{W}$ is obtained by applying the Eilenberg-Maclane functor (c.f. \cite[V.7.7]{GJ2}) sectionwise.

\end{theorem}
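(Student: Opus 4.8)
The plan is to verify that $(\mathcal G, \bar W)$ is first a Quillen adjunction and then a Quillen equivalence, reducing everything to two inputs: that $\bar W$ both preserves and reflects local sCat-equivalences, and that the classical Dwyer--Kan unit is a sectionwise weak equivalence. First I would isolate the key lemma that $\bar W$ takes a map $f$ of presheaves of simplicial groupoids to a local weak equivalence if and only if $f$ is a local sCat-equivalence. The input is the standard sectionwise identification of the homotopy groups of the classifying space: for each $U$ and each vertex $x$ one has natural isomorphisms $\pi_n(\bar W(H)(U), x) \cong \pi_{n-1}(\mathrm{hom}_{H(U)}(x,x), \mathrm{id})$ for $n \ge 1$, together with $\pi_0 \bar W(H)(U) \cong \mathrm{Ob}(\pi_0 H(U))/{\cong}$. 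Sheafifying, and using that $\tilde\pi_n$ is the sheafification of the presheaf $\pi_n$, the homotopy sheaves of $\bar W(H)$ are governed exactly by the homotopy sheaves of the automorphism spaces $\mathrm{hom}_H(x,x)$ and by the sheaf of isomorphism classes of $\pi_0(H)$.

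I would then match these against \ref{def2.4}: condition (1), rewritten via \ref{rmk2.6} as the assertion that $\mathrm{hom}_H(x,x') \to \mathrm{hom}_K(fx, fx')$ is a local weak equivalence for all object-pairs, corresponds to $\bar W(f)$ being an isomorphism on the higher homotopy sheaves, while condition (b) of \ref{thm5.4} (local epimorphism on $\pi_0$, equivalently condition (2) of \ref{def2.2} for $\pi_0$) corresponds to $\bar W(f)$ being a $\tilde\pi_0$-isomorphism. Since a map of simplicial presheaves is a local weak equivalence precisely when it induces isomorphisms on all homotopy sheaves, $\bar W(f)$ is a local weak equivalence iff $f$ is a local sCat-equivalence. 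Alternatively the same conclusion follows by Boolean localization, using \ref{lem3.7}, \ref{cor3.11}, and a commutation of $\bar W$ with $p^* L^2$ in the manner of \ref{lem3.2} and \ref{cor3.3}, together with the classical sectionwise fact that $\bar W$ preserves weak equivalences of simplicial groupoids (c.f. \cite{GJ2}).

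Granting the key lemma, the Quillen adjunction is immediate: $\bar W$ preserves fibrations by the very definition of the fibrations in \ref{thm5.4}, and it preserves weak equivalences by the key lemma, hence it preserves trivial fibrations. For the Quillen equivalence I would use the criterion that it suffices for $\bar W$ to reflect weak equivalences between fibrant objects and for the derived unit to be a weak equivalence on cofibrant objects. Reflection of weak equivalences holds for all maps, a fortiori between fibrant objects, by the reflecting half of the key lemma. For the derived unit, note that every object of the injective model structure is cofibrant, since its cofibrations are the monomorphisms, so the criterion must be checked on every $X$. The classical Dwyer--Kan unit $X(U) \to \bar W \mathcal G (X)(U)$ is a weak equivalence for every simplicial set, so $\eta_X : X \to \bar W \mathcal G(X)$ is a sectionwise, hence local, weak equivalence for every simplicial presheaf $X$. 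The derived unit factors as $\eta_X$ followed by $\bar W$ applied to a fibrant replacement $\mathcal G(X) \to R\mathcal G(X)$; the latter is a weak equivalence by the key lemma, so the composite is a local weak equivalence, giving the Quillen equivalence.

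The main obstacle is the key lemma, and specifically the bookkeeping that matches the two conditions defining a local sCat-equivalence to the homotopy-sheaf conditions defining a local weak equivalence of $\bar W$. The subtle point is the passage from the automorphism spaces $\mathrm{hom}_H(x,x)$, which alone control the higher homotopy sheaves of $\bar W(H)$ through a chosen basepoint, to all the hom-spaces $\mathrm{hom}_H(x,x')$ appearing in \ref{def2.4}(1); this uses the groupoid structure, whereby a morphism $x \to x'$ induces an isomorphism $\mathrm{hom}_H(x,x') \cong \mathrm{hom}_H(x,x)$ and the $\pi_0$-condition records exactly which pairs of objects are so connected. If one prefers to route the argument through Boolean localization rather than homotopy sheaves, the corresponding obstacle becomes verifying that $\bar W$ commutes with $p^* L^2$ up to natural isomorphism, which requires a finite-type analysis of the bisimplicial object underlying $\bar W(H)$ analogous to \ref{lem3.2}.
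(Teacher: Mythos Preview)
The paper does not prove \ref{thm5.5}; it is stated as a citation of \cite[Theorem 9.50]{local} and no argument is given. There is therefore nothing in the paper to compare your proposal against.

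That said, your outline is a correct and standard route to the result and is in the spirit of the cited reference. The three ingredients you isolate are exactly the right ones: (i) $\bar W$ preserves fibrations tautologically by clause (2) of \ref{thm5.4}; (ii) $\bar W$ preserves and reflects weak equivalences, which is the content of your key lemma and is precisely \cite[Lemma 9.52]{local} (also invoked in the paper's statement of \ref{thm5.4}); and (iii) the unit $X \to \bar W\mathcal G(X)$ is a sectionwise weak equivalence by the classical Dwyer--Kan theory \cite[V.7]{GJ2}. Your Quillen-equivalence criterion (reflection of weak equivalences plus derived unit) is valid here since every simplicial presheaf is injective-cofibrant.

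The one place to be careful is exactly the obstacle you flag: matching condition (1) of \ref{def2.4}, which concerns all hom-presheaves $\mathrm{hom}_H(x,x')$, against the homotopy sheaves of $\bar W(H)$, which at a given local basepoint $x$ only see $\mathrm{hom}_H(x,x)$. Your resolution via the groupoid structure is correct, but note that the basepoints in question are local sections over varying $U$, not global sections, so the bookkeeping must be carried out over the slice sites $\mathscr C/U$; alternatively your Boolean-localization route (commuting $\bar W$ with $p^*L^2$ and invoking \ref{cor3.11}) avoids this entirely and is cleaner.
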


The following is the main theorem of \cite{Stevenson} and is called the `generalized Eilenberg-Zilber Theorem' (c.f. also \cite[Proposition 9.38]{local}).

\begin{theorem}\label{thm5.6}
There is a natural weak equivalence $dB \rightarrow \bar{W}$. 
\end{theorem}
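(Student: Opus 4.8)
The plan is to reduce the statement to the Eilenberg--Zilber theorem for bisimplicial sets and then to establish the latter by a spectral sequence comparison. First I would unwind the functors. Applying the nerve $B$ in each simplicial degree to a presheaf of simplicial groupoids $G$ produces a bisimplicial presheaf of sets $B(G)$ with $B(G)_{p,q} = B(G_q)_p$, the nerve running in the $p$-direction and the ambient simplicial structure of $G$ in the $q$-direction; then $dB(G)$ is its diagonal. A direct comparison of defining formulas shows that $\bar{W}(G)$, the Eilenberg--Mac Lane functor of \ref{thm5.5}, is naturally isomorphic to the Artin--Mazur codiagonal (total complex) $\bar{W}$ of the bisimplicial set $B(G)$ taken in the nerve direction. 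Thus it suffices to produce, for an arbitrary bisimplicial set $X$, a natural weak equivalence $dX \rightarrow \bar{W}X$ and then specialize to $X = B(G)$; since everything is constructed sectionwise, the resulting sectionwise weak equivalence of simplicial presheaves is in particular a local weak equivalence.

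Next I would construct the natural transformation. The generalized Eilenberg--Zilber (shuffle) map supplies, for every bisimplicial set $X$, a natural simplicial map $\nabla_X : dX \rightarrow \bar{W}X$; naturality in $X$ is immediate from the explicit simplex-level formula, and naturality in $G$ follows by specialization to $X = B(G)$.

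The main work — and the expected obstacle — is to show $\nabla_X$ is a weak equivalence. Conceptually, both $dX$ and $\bar{W}X$ are models for the homotopy colimit over $\Delta^{op}$ of the simplicial diagram of simplicial sets $[q] \mapsto X_{\bullet, q}$: for the diagonal this is the Bousfield--Kan realization lemma (valid as every bisimplicial set is Reedy cofibrant in the vertical direction), while for $\bar{W}$ it follows from the total d\'ecalage adjunction $\mathrm{Dec} \dashv \bar{W}$ together with an extra-degeneracy argument identifying $\mathrm{Dec}$ with the bar resolution. To turn this into a proof that $\nabla_X$ itself is an equivalence, I would compare first-quadrant spectral sequences: both $dX$ and $\bar{W}X$ carry the homology spectral sequence of a bisimplicial set, with common $E^2$-page $E^2_{s,t} = H_s\big([n] \mapsto H_t(X_{\bullet,n})\big)$, and since the Eilenberg--Zilber/Alexander--Whitney maps are chain homotopy equivalences compatible with the filtrations, $\nabla_X$ induces the identity on $E^2$.

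To upgrade this $E^2$-comparison to a weak equivalence, I would first check that $\nabla_X$ induces an isomorphism of fundamental groupoids — computable directly from the combinatorics of $d$ and $\bar{W}$, or from the $\Delta^{op}$-diagram — and then apply the spectral sequence comparison on universal covers to obtain a homology isomorphism there, concluding by a Whitehead-type argument that $\nabla_X$ is a weak equivalence. The delicate point throughout is the homotopical control of $\bar{W}$, which, unlike the diagonal, neither preserves colimits nor is manifestly a homotopy colimit; it is the d\'ecalage adjunction and the spectral sequence comparison that carry the real weight of the argument.
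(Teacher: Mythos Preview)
The paper does not supply its own proof of this statement: Theorem~\ref{thm5.6} is simply quoted as the main theorem of \cite{Stevenson} (with a parallel reference to \cite[Proposition 9.38]{local}), so there is nothing in the paper to compare your argument against directly.

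Your reduction is correct and matches the literature: the identification of the Eilenberg--Mac\,Lane $\bar{W}$ on simplicial groupoids with the Artin--Mazur codiagonal of the bisimplicial nerve is standard, and the shuffle map $\nabla_X : dX \to \bar{W}X$ is the right comparison morphism. Where your sketch becomes shaky is the final step. A homology isomorphism together with a $\pi_1$-isomorphism does not by itself force a weak equivalence; you propose to fix this by passing to universal covers and rerunning the spectral sequence there, but the universal cover of $dX$ (resp.\ $\bar{W}X$) is not in any evident way the diagonal (resp.\ codiagonal) of a bisimplicial set, so you no longer have access to the bisimplicial spectral sequence on the covers. As written, that step is a genuine gap.

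The published arguments avoid this trap. Cegarra--Remedios prove that $\nabla_X$ is a weak equivalence by an induction on the skeletal filtration of $X$, using that both $d$ and $\bar{W}$ send levelwise weak equivalences to weak equivalences and interact well with the relevant pushouts; Stevenson instead exploits the total d\'ecalage adjunction $\mathrm{Dec} \dashv \bar{W}$ you mention, showing directly that the unit $Y \to \bar{W}\,\mathrm{Dec}(Y)$ is a weak equivalence (extra degeneracy) and combining this with the identification of $\mathrm{Dec}$ with the diagonal up to natural weak equivalence. Either route would close the gap; your spectral-sequence idea could also be salvaged by running it with arbitrary local coefficient systems rather than appealing to universal covers, but that requires additional care you have not indicated.
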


Suppose that $X$ and $Y$ are both presheaves of simplicial categories (respectively, presheaves of simplicial groupoids). Write $h_{hyp}(X, Y)_{s\mathrm{Cat}(\mathscr{C})}$ (respectively  $h(X, Y)_{s\mathrm{Gpd}(\mathscr{C})}$) for the full subcategory of $h(X, Y)_{s\mathrm{Cat}(\mathscr{C})}$ consisting of objects $(f, g)$ such that $f$ is also a sectionwise fibration for the Bergner model structure (respectively, for the model structure of \ref{thm5.3}).

\begin{lemma}\label{lem5.7}

Let $X$ and $Y$ are presheaves of simplicial categories such that $Y$ is a presheaf of fibrant simplicial categories. Let
$i: h_{hyp}(X, Y)_{sCat\textbf{Pre}(\mathscr{C})} \subseteq h(X, Y)_{sCat\textbf{Pre}(\mathscr{C})}$ be the inclusion. Then $\pi_{0}(i)$ is a bijection.

\end{lemma}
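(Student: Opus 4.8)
The plan is to build a functor $\Phi \colon h(X,Y)_{sCat\textbf{Pre}(\mathscr{C})} \to h_{hyp}(X,Y)_{sCat\textbf{Pre}(\mathscr{C})}$ together with a natural transformation relating $i \circ \Phi$ to the identity, and then to observe that such data forces $\pi_0(i)$ to be a bijection. The essential point, and the only place the fibrancy of $Y$ enters, is that the projection $pr_X \colon X \times Y \to X$ is a sectionwise fibration for the Bergner model structure: in each section it is the pullback of $Y(U) \to \ast$ along $X(U) \to \ast$, and $Y(U) \to \ast$ is an sCat-fibration precisely because $Y(U)$ is fibrant. Fibrations are stable under pullback, so $pr_X$ is a sectionwise Bergner fibration.

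Given a cocycle $X \xleftarrow{f} A \xrightarrow{g} Y$, I would regard it as the single map $(f,g) \colon A \to X \times Y$ and factor it in the global projective model structure on $sCat\textbf{Pre}(\mathscr{C})$ as a projective trivial cofibration $j \colon A \to Z$ followed by a projective fibration $(p,q) \colon Z \to X \times Y$. I then set $\Phi(f,g)$ to be the cocycle $X \xleftarrow{p} Z \xrightarrow{q} Y$, where $p = pr_X \circ (p,q)$ and $q = pr_Y \circ (p,q)$. This object lies in $h_{hyp}$: the map $p$ is a composite of two sectionwise Bergner fibrations, hence itself a sectionwise fibration; and since $p \circ j = f$ with $j$ a sectionwise sCat-equivalence (so a local sCat-equivalence by \ref{rmk2.7}) and $f$ a local sCat-equivalence, two-out-of-three in the local Bergner model structure (\ref{thm4.1}) shows that $p$ is a local sCat-equivalence. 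Because the factorization is functorial, $\Phi$ is a genuine functor.

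The maps $j$ then assemble into a natural transformation $\eta \colon \mathrm{id} \Rightarrow i \circ \Phi$: each $j$ is a morphism of cocycles $(f,g) \to \Phi(f,g)$, since $p \circ j = f$ and $q \circ j = g$, and naturality is exactly functoriality of the factorization along morphisms of cocycles. Since $h_{hyp}$ is a \emph{full} subcategory of $h(X,Y)$, the transformation $\eta$ restricts on $h_{hyp}$ to a natural transformation $\mathrm{id} \Rightarrow \Phi \circ i$. As a natural transformation induces a homotopy of nerves, we get $\pi_0(i)\circ\pi_0(\Phi) = \mathrm{id}$ on $\pi_0 h(X,Y)$ and $\pi_0(\Phi)\circ\pi_0(i) = \mathrm{id}$ on $\pi_0 h_{hyp}(X,Y)$, so $\pi_0(i)$ is a bijection with inverse $\pi_0(\Phi)$.

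I expect the main obstacle to be conceptual rather than computational: recognizing that encoding the cocycle as the map $(f,g)$ into $X \times Y$ reduces everything to a single functorial factorization, and that the fibrancy hypothesis on $Y$ is used solely to make $pr_X$ a sectionwise fibration (so that the replaced left leg $p$ becomes a fibration). The remaining care is bookkeeping: checking that the projective functorial factorization produces an honest map of presheaves of simplicial categories, that it is natural in the cocycle, and that the output $(p,q)$ genuinely satisfies the defining conditions of $h_{hyp}$. Once $pr_X$ is known to be a sectionwise fibration, this is the standard cocycle argument.
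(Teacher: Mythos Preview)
Your proposal is correct and is precisely the argument the paper has in mind: the paper's proof consists of the single line ``Use the argument of \cite[Lemma 6.14]{local}'', and that argument is exactly the functorial factorization of $(f,g)\colon A \to X\times Y$ into a trivial cofibration followed by a fibration, using the fibrancy of $Y$ to ensure $pr_X$ is a sectionwise fibration so that the replaced left leg lands in $h_{hyp}$. You have correctly adapted it to the present setting by running the factorization in the global projective model structure on $sCat\textbf{Pre}(\mathscr{C})$ and invoking \ref{rmk2.7} and the two-out-of-three property in the local Bergner model structure.
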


\begin{proof}
Use the argument of \cite[Lemma 6.14]{local}.
\end{proof}
If $X, Y$ are also presheaves of simplicial groupoids, then the statement is also true if we replace $sCat\textbf{Pre}(\mathscr{C})$ with $sGpd\textbf{Pre}(\mathscr{C})$.  

\begin{definition}\label{lem5.8}
\normalfont
We write $G : \mathrm{Cat} \rightarrow \mathrm{Gpd}$ for the groupoid completion functor. We write $G : \mathrm{sCat} \rightarrow \mathrm{sGpd}$ for the functor defined by $G(C)_{n} = G(C_{n})$. 
\end{definition}

\begin{lemma}\label{lem5.9}
Suppose that we have a local sCat-equivalence
$$
f: X \rightarrow Y,
$$
where $Y$ is a presheaf of groupoids, both $X$ and $Y$ are sectionwise fibrant and $X$ is projective cofibrant. Then the natural map 
$$
k_{X} : X \rightarrow G(X)
$$
is a local sCat-equivalence.
\end{lemma}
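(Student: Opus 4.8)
The plan is to verify the two conditions of Definition~\ref{def2.4} for the natural map $k_{X}$, using throughout that groupoid completion does not change objects, so that $k_{X}$ is the identity on objects and $\mathrm{Ob}(G(X)) = \mathrm{Ob}(X)$.

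Condition (2), the $\pi_{0}$-condition, I expect to be formal. The inclusion of presheaves of (simplicial) groupoids into presheaves of (simplicial) categories is a right adjoint and commutes with the evident inclusions, so there is a natural isomorphism $\pi_{0}G \cong G\pi_{0}$, and $G$ fixes groupoids. Since $f$ is a local sCat-equivalence, $\pi_{0}(f)\colon \pi_{0}(X)\to\pi_{0}(Y)$ is a local equivalence of presheaves of categories onto the presheaf of groupoids $\pi_{0}(Y)$; I would first check that the sheafified full faithfulness of Definition~\ref{def2.2} then forces $L^{2}\pi_{0}(X)$ to be a sheaf of groupoids. Combined with $\pi_{0}G\cong G\pi_{0}$ and the fact that $G$ acts trivially on sheaves of groupoids, this shows $\pi_{0}(k_{X})$ induces an isomorphism on associated sheaves, giving condition (2).

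The real content is condition (1). By Remark~\ref{rmk2.6} and the fact that $k_{X}$ is the identity on objects, condition (1) for $k_{X}$ is equivalent to each map $\mathrm{hom}_{X}(x,x')\to\mathrm{hom}_{G(X)}(x,x')$ being locally a weak equivalence. Here the hypothesis that $X$ is projective cofibrant is essential: it guarantees that the strict, levelwise groupoid completion $G(X)$ computes the derived groupoid completion, so that $\mathrm{hom}_{G(X)}(x,x')$ has the correct homotopy type. To access these mapping spaces I would use that $\mathfrak{B}(X)$ is a Kan complex: by Lemma~\ref{weirdlem} and the identification $P\mathfrak{B}\cong\pi_{0}\mathfrak{C}\mathfrak{B}\simeq\pi_{0}$ from the proof of Lemma~\ref{lem4.11}, one has $P(\mathfrak{B}X)\simeq\pi_{0}(X)$, which is locally a groupoid by the previous paragraph; after reducing to a Boolean site (as below) it becomes an honest groupoid, so Theorem~\ref{thm4.10} applies. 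One then has the counit weak equivalence $\mathfrak{C}(\mathfrak{B}X)\to X$ of the Quillen equivalence~\ref{thm1.5} (a weak equivalence since $X$ is fibrant), which should let one compute $\mathrm{hom}_{X}$ and $\mathrm{hom}_{G(X)}$ in terms of the path and loop spaces of the Kan complex $\mathfrak{B}X$, which are unaffected by formally inverting morphisms that are already equivalences.

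I expect condition (1) --- the invariance of the mapping spaces under groupoid completion --- to be the main obstacle, and it is the step where the hypotheses must genuinely be used. The statement is false for arbitrary simplicial categories: the levelwise group completion of a simplicial monoid is not homotopy invariant, so some form of ``all morphisms are equivalences'' is indispensable, and this is supplied exactly by the equivalence of $X$ with the presheaf of groupoids $Y$ (equivalently, by $\pi_{0}(X)$ being locally a groupoid and $\mathfrak{B}X$ being a Kan complex). To organize the argument cleanly I would reduce to a Boolean site via Lemma~\ref{lem3.7} and Corollary~\ref{cor3.11}, after first establishing a compatibility $p^{*}L^{2}\,G\cong G\,p^{*}L^{2}$ for the groupoid completion along the lines of Lemmas~\ref{lem3.5} and~\ref{lem3.7}; this auxiliary compatibility, together with the mapping-space invariance above, is the technical heart of the proof.
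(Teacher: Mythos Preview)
Your outline correctly identifies the structure of the problem and the roles of the hypotheses, but the heart of the argument---condition (1)---is not actually carried out, and the route you sketch via $\mathfrak{B}$ does not obviously close. You assert that once $\mathfrak{B}X$ is a Kan complex one can ``compute $\mathrm{hom}_{X}$ and $\mathrm{hom}_{G(X)}$ in terms of the path and loop spaces of $\mathfrak{B}X$, which are unaffected by formally inverting morphisms that are already equivalences.'' But $\mathrm{hom}_{G(X)}$ is defined by the strict levelwise groupoid completion, and there is no evident comparison $\mathfrak{B}(G(X))\simeq \mathfrak{B}(X)$: $\mathfrak{B}$ is a right adjoint while $G$ is a left adjoint, so the Quillen equivalence of \ref{thm1.5} gives you no direct control over $\mathfrak{B}\circ G$. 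What you are really invoking is the statement ``for cofibrant $C$ with $\pi_{0}(C)$ a groupoid, $C\to G(C)$ is an sCat-equivalence,'' and that is precisely the nontrivial Dwyer--Kan result \cite[Proposition 9.5]{localization}; it is not a consequence of the homotopy coherent nerve being Kan. A second gap is that after passing to $p^{*}L^{2}$ you lose projective cofibrancy of $X$, so even granting the Dwyer--Kan result you cannot apply it sectionwise without further work.

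The paper handles both issues directly. It first applies Corollary~\ref{cor3.11} to see that $p^{*}L^{2}(f)$ is a sectionwise sCat-equivalence, so $\pi_{0}p^{*}L^{2}X$ is a presheaf of groupoids. Lemma~\ref{lem4.3} then gives $p^{*}L^{2}X\cong L^{2}X'$ with $X'$ projective cofibrant, recovering cofibrancy on the Boolean side. Finally, the paper compares $X'$, $\mathrm{DK}\,X'$, and $\mathrm{DK}\,p^{*}L^{2}X$ and invokes the Dwyer--Kan results \cite[Corollary 9.4, Proposition 9.5]{localization} sectionwise to conclude that $X'\to G(X')$ is a local sCat-equivalence; sheafifying and using that $p^{*}L^{2}$ reflects local sCat-equivalences (Lemma~\ref{lem3.7}) finishes. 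In short, the substantive input you are missing is the citation to \cite[Proposition 9.5]{localization} together with Lemma~\ref{lem4.3}; your proposed $p^{*}L^{2}G\cong Gp^{*}L^{2}$ is plausible but not what the paper uses, and by itself it does not replace the Dwyer--Kan input.
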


\begin{proof}

Note that $p^{*}L^{2}(f)$ is a sectionwise sCat-equivalence by \ref{cor3.11}. Thus, $\pi_{0}(p^{*}L^{2}X)$ is a presheaf of groupoids since $\pi_{0}(p^{*}L^{2}Y)$ is a presheaf of groupoids. Now, by \ref{lem4.3}, $p^{*}L^{2}(X) \cong L^{2}(X')$ for some projective cofibrant presheaf $X'$. Consider the diagram
$$
\xymatrix
{
GX'  & \ar[l]_{\phi} G \mathrm{DK} X' \ar[r]  & G \mathrm{DK} p^{*}L^{2} X \\
X'    \ar[u]_{k_{X'}} & \mathrm{DK} X'   \ar[u] \ar[l] \ar[r]  & \mathrm{DK}p^{*}L^{2}X  \ar[u]_{k_{\mathrm{DK}Gp^{*}L^{2}X}}
}
$$  
 The horizontal maps in the right hand square are both local isomorphisms. By \cite[Corollary 9.4]{localization}, the map $\phi$ is a sectionwise sCat-equivalence. Because $\pi_{0}p^{*}L^{2}(X)$ is a presheaf of groupoids, \cite[Proposition 9.5]{localization} implies $k_{\mathrm{DK}p^{*}L^{2}GX}$ is a sectionwise sCat-equivalence. We conclude that $X' \rightarrow G(X')$ is a local sCat-equivalence. But the sheafification of this map is naturally isomorphic to $p^{*}L^{2}(k_{X})$. Since $p^{*}L^{2}$ reflects local sCat-equivalences, we conclude that $k_{X}$ is a local sCat-equivalence. 

\end{proof}

\begin{theorem}\label{thm5.10}
Let $X$ and $Y$ be presheaves of simplicial groupoids. Then there is a bijection
$$
[X, Y]_{sGpd\textbf{Pre}(\mathscr{C})} \rightarrow [X, Y]_{sCat\textbf{Pre}(\mathscr{C})} 
$$
between maps in the homotopy category of the model structure of \ref{thm5.4}
and maps in the homotopy category of the local Bergner model structure.
\end{theorem}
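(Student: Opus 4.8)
The plan is to realise both $[X,Y]_{sGpd\textbf{Pre}(\mathscr{C})}$ and $[X,Y]_{sCat\textbf{Pre}(\mathscr{C})}$ as sets of path components of cocycle categories and to compare them through the adjunction $G \dashv i$, where $i : sGpd\textbf{Pre}(\mathscr{C}) \rightarrow sCat\textbf{Pre}(\mathscr{C})$ is the inclusion and $G$ is the groupoid completion of \ref{lem5.8}. To apply the cocycle theorem \ref{thm5.2} I first verify its hypotheses on both sides. The local Bergner structure is right proper by \ref{lem4.12}, and local sCat-equivalences are closed under finite products: finite products of local weak equivalences of simplicial presheaves are again local weak equivalences, so condition (1) of \ref{def2.4} is product-stable, while $\pi_{0}$ commutes with products, handling condition (2). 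For the structure of \ref{thm5.4}, right properness transfers from the injective structure because $\bar{W}$ preserves pullbacks and both preserves and reflects weak equivalences, and product-closure is the same computation restricted to groupoids. Thus \ref{thm5.2} gives $[X,Y]_{M} \cong \pi_{0} h(X,Y)_{M}$ for $M \in \{ sGpd\textbf{Pre}(\mathscr{C}), sCat\textbf{Pre}(\mathscr{C})\}$, and the theorem reduces to showing that $i$ induces a bijection $i_{*} : \pi_{0} h(X,Y)_{sGpd\textbf{Pre}(\mathscr{C})} \rightarrow \pi_{0} h(X,Y)_{sCat\textbf{Pre}(\mathscr{C})}$.

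Since a presheaf of simplicial groupoids has Kan hom-objects in each section (each $\mathrm{hom}(x,y)$ is empty or a torsor under the simplicial group $\mathrm{hom}(x,x)$), $Y$ is a presheaf of fibrant simplicial categories, so \ref{lem5.7} is available on both sides. For surjectivity of $i_{*}$, begin with a cocycle $X \xleftarrow{f} A \xrightarrow{g} Y$ in $sCat\textbf{Pre}(\mathscr{C})$. Applying functorial projective cofibrant replacement $q : Q(A) \rightarrow A$, then factoring $(f q, g q) : Q(A) \rightarrow X \times Y$ functorially in the global projective model structure as a projective trivial cofibration $j : Q(A) \rightarrow A^{c}$ followed by a sectionwise sCat-fibration $p : A^{c} \rightarrow X \times Y$, I obtain a cocycle $X \xleftarrow{f'} A^{c} \xrightarrow{g'} Y$ with $A^{c}$ projective cofibrant (as a composite of projective cofibrations) and sectionwise fibrant (being a sectionwise fibration over the sectionwise fibrant $X \times Y$), together with a zigzag of cocycle morphisms $(f,g) \xleftarrow{q} (f q, g q) \xrightarrow{j} (f',g')$. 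Now \ref{lem5.9} applies to $f' : A^{c} \rightarrow X$, so $k_{A^{c}} : A^{c} \rightarrow G(A^{c})$ is a local sCat-equivalence; passing to adjoints produces $\bar{f'} : G(A^{c}) \rightarrow X$ and $\bar{g'} : G(A^{c}) \rightarrow Y$ with $f' = \bar{f'} k_{A^{c}}$ and $g' = \bar{g'} k_{A^{c}}$, where $\bar{f'}$ is a local sCat-equivalence of presheaves of simplicial groupoids by two-out-of-three. Hence $(\bar{f'}, \bar{g'})$ lies in the image of $i$, and $k_{A^{c}}$ exhibits $[(f,g)] = i_{*}[(\bar{f'}, \bar{g'})]$.

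Because every step above is functorial, the assignment $(f,g) \mapsto (\bar{f'}, \bar{g'})$ defines a functor $h(X,Y)_{sCat\textbf{Pre}(\mathscr{C})} \rightarrow h(X,Y)_{sGpd\textbf{Pre}(\mathscr{C})}$ and hence a map $\Phi$ on path components. The computation above shows $i_{*} \circ \Phi = \mathrm{id}$ via the natural zigzag $A \xleftarrow{q} Q(A) \xrightarrow{j} A^{c} \xrightarrow{k_{A^{c}}} G(A^{c})$ of local sCat-equivalences respecting the legs. For $\Phi \circ i_{*} = \mathrm{id}$, apply the construction to a cocycle $X \xleftarrow{f} B \xrightarrow{g} Y$ already in $sGpd\textbf{Pre}(\mathscr{C})$: here $G(B) \cong B$, and applying \ref{lem5.9} to the replacement $B^{c} \rightarrow B$ together with naturality of $k$ shows that $G(B^{c}) \rightarrow G(B) \cong B$ is a local sCat-equivalence, so $(\bar{f'}, \bar{g'})$ is connected to $(f,g)$ inside $h(X,Y)_{sGpd\textbf{Pre}(\mathscr{C})}$. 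Therefore $i_{*}$ is a bijection, and composing with the two instances of \ref{thm5.2} yields the desired bijection $[X,Y]_{sGpd\textbf{Pre}(\mathscr{C})} \rightarrow [X,Y]_{sCat\textbf{Pre}(\mathscr{C})}$.

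The main obstacle is the construction of the inverse $\Phi$: one must produce, functorially in the cocycle, a middle object that is simultaneously projective cofibrant and sectionwise fibrant so that \ref{lem5.9} can be brought to bear, and then check that applying $G$ lands honestly in the groupoid cocycle category, i.e. that the replaced left leg becomes a genuine weak equivalence of presheaves of simplicial groupoids. Everything rests on \ref{lem5.9}, which is precisely the statement that $G$ preserves local sCat-equivalences in the presence of cofibrancy, fibrancy, and a groupoid target; since $G$ does not preserve weak equivalences in general, the cofibrant-fibrant replacement cannot be dispensed with. Verifying the hypotheses of \ref{thm5.2} for the structure of \ref{thm5.4}, in particular its right properness, is a secondary technical point.
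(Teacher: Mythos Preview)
Your strategy coincides with the paper's: reduce to $\pi_{0}$ of cocycle categories via \ref{thm5.2}, then use the adjunction $G \dashv i$ together with \ref{lem5.9} to pass between the two cocycle categories. The paper organises the details a little differently: it first replaces $X,Y$ by fibrant objects in the structure of \ref{thm5.4} and then restricts to the subcategories $h_{hyp}$ of cocycles whose left leg is a sectionwise sCat-fibration (via \ref{lem5.7}); this makes every middle object sectionwise fibrant automatically, after which a single projective cofibrant replacement $\mathcal{K}$ followed by $G$ suffices. Your alternative---producing a projective-cofibrant and sectionwise-fibrant middle directly by factoring $Q(A) \rightarrow X \times Y$---is a perfectly good substitute and spares you the preliminary fibrant replacement of $X,Y$, at the cost of a slightly longer zigzag.

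There is, however, a real gap in your argument for $\Phi \circ i_{*} = \mathrm{id}$. You invoke ``the replacement $B^{c} \rightarrow B$'' and conclude that $G(B^{c}) \rightarrow G(B) \cong B$ is a local sCat-equivalence, but no map $B^{c} \rightarrow B$ was ever constructed: your $B^{c}$ arises from factoring $Q(B) \rightarrow X \times Y$, not from anything mapping to $B$, so there is no direct comparison $G(B^{c}) \rightarrow B$ to appeal to. The fix is to route through $G(Q(B))$. Since the cofibrant replacement $q : Q(B) \rightarrow B$ is a sectionwise trivial sCat-fibration and $B$, being a presheaf of simplicial groupoids, is sectionwise Bergner-fibrant, the object $Q(B)$ is both projective cofibrant and sectionwise fibrant; hence \ref{lem5.9} applied to $q$ shows $k_{Q(B)}$ is a local sCat-equivalence, and two-out-of-three gives that $G(Q(B)) \rightarrow G(B) = B$ is one as well. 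Similarly, applying \ref{lem5.9} to $Q(B)$ and to $B^{c}$ and using the naturality square for $k$ along the trivial cofibration $j : Q(B) \rightarrow B^{c}$ shows that $G(Q(B)) \rightarrow G(B^{c})$ is a local sCat-equivalence. The resulting zigzag $B \leftarrow G(Q(B)) \rightarrow G(B^{c})$ lies entirely in $h(X,Y)_{sGpd\textbf{Pre}(\mathscr{C})}$, is compatible with the legs to $X$ and $Y$, and connects $(f,g)$ to $(\bar{f'},\bar{g'})$ there. With this correction your argument goes through and is essentially equivalent to the paper's.
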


\begin{proof}

We can replace $X$ and $Y$ with their fibrant replacement in the model structure of \ref{thm5.4}. In particular, we may assume that $X$ and $Y$ are presheaves of fibrant simplicial groupoids. 
Let
$$
i : h_{hyp}(X, Y)_{sGpd\textbf{Pre}(\mathscr{C})} \rightarrow h_{hyp}(X, Y)_{sCat\textbf{Pre}(\mathscr{C})}
$$
be the inclusion. By \ref{lem5.7} and \ref{thm5.2}, it suffices to show that $\pi_{0}(i)$ is a bijection.

First, note that $i$ is a surjection. Indeed, suppose that 
$$
\xymatrix
{
\sigma: X & \ar[l]_>>>>>>{f} Z \ar[r]^{g}  & Y 
}
$$
is an element of $ h_{hyp}(X, Y)_{sCat\textbf{Pre}(\mathscr{C})}$. Let $\mathcal{K}$ denote the cofibrant replacement functor for the global projective model structure on $sCat\textbf{Pre}(\mathscr{C})$. 
Since $Z$ is sectionwise fibrant,
it follows from \ref{lem5.9} that $G\mathcal{K}(Z) \rightarrow \mathcal{K}(Z)$ is a local sCat-equivalence. Thus,
$$
\xymatrix
{
& Z \ar[dl]_{f } \ar[dr]^{g } & \\
X  & & Y \\
& G(\mathcal{K}(Z)) \ar[ul] \ar[uu]_{h}  \ar[ur]&
}
$$
represents a map of cocycles, with the bottom cocycle an element of $h(X, Y)_{sGpd\textbf{Pre}(\mathscr{C})}$, as required.

On the other hand, we will show that $i$ is injective. It suffices to note that if  

$$
\xymatrix
{
& Z \ar[dl] \ar[dr] & \\
X  & & Y \\
& W \ar[ul] \ar[uu]  \ar[ur] &
}
$$
is a map of cocycles in which everything is sectionwise fibrant, then 
$$
\xymatrix
{
& G\mathcal{K}(Z) \ar[dl] \ar[dr] & \\
X  & & Y \\
& G\mathcal{K}(W) \ar[ul] \ar[uu]  \ar[ur] &
}
$$
represents a map of cocycles by \ref{lem5.9}.

\end{proof}

Given a presheaf of simplicial categories $A$, an \textbf{A-diagram} consists of a simplicial set map $\pi : X \rightarrow \mathrm{Ob}(A)$ as well as an action diagram
$$
\xymatrix
{
X \times_{s} \mathrm{Mor}(A) \ar[r]_>>>>>>>>{a} \ar[d] &  X \ar[d]_{\pi} \\
\mathrm{Mor}(A) \ar[r]_{t} &  \mathrm{Ob}(A)
}
$$
\index{$A$-diagram}
A map of $A$-diagrams is a commutative diagram
$$
\xymatrix
{
X \ar[r]_{\phi} \ar[dr]_{\pi} & Y \ar[d]^{\pi} \\
&  \mathrm{Ob}(A)
}
$$
which respects the action. 
We write $sSet^{A}$ for the category of $A$-diagrams. \index{$\,sSet^{A}$}

In sections, this is equivalent to the internal description of functors $X_{n} : A(U)_{n} \rightarrow \mathrm{Set}$, compatible with simplicial identities. 
Thus, we can define a simplicial presheaf $\underrightarrow{\mathrm{holim}}_{A_{n}}(X_{n})$
by 

 $$U \mapsto \underrightarrow{\mathrm{holim}}_{A_{n}(U)} X_{n}(U).$$ We call an $A$-diagram a \textbf{A-torsor} if and only if
$$
\underrightarrow{\mathrm{holim}}_{A_{n}}(X_{n}) \rightarrow *
$$
is a local weak equivalence. 
Let $\textbf{Tors}_{A}$\index{$\,\textbf{Tors}_{A}$} denote the full subcategory of $\mathrm{sSet}^{A}$ of $A$-torsors. This definition of torsors appeared in \cite{JardineTorsors}, and generalizes the classical description of a torsor (c.f. \cite[pg. 251]{local} for a discussion). \index{torsor}
\\

We call a presheaf $X$ of simplicial categories \textbf{sectionwise cofibrant} if and only if $X(U)$ is cofibrant for the Bergner model structure for all $ U \in \mathrm{Ob}(\mathscr{C})$. Note that this is not the same as being cofibrant for the local Bergner model structure.

\begin{theorem}\label{thm5.11} Suppose that $X$ is a sectionwise cofibrant presheaf of simplicial categories such that $\pi_{0}(X)$ is a presheaf of groupoids. Then there is a bijection
$$
\pi_{0}(\textbf{Tors}_{GX})  \rightarrow [*, X]_{sCat\textbf{Pre}(\mathscr{C})}.
$$

\end{theorem}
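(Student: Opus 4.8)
The plan is to exhibit the bijection as a composite of identifications, each supplied by a result of this section, which together reduce the statement to the known homotopy classification of torsors for presheaves of simplicial groupoids. Since $GX$ is a presheaf of simplicial groupoids, I would first assign to a $GX$-torsor $Y$ the cocycle
$$
* \leftarrow \underrightarrow{\mathrm{holim}}_{GX} Y \rightarrow \underrightarrow{\mathrm{holim}}_{GX} * = dB(GX) \rightarrow \bar{W}(GX),
$$
whose left leg is a local weak equivalence by the definition of a torsor, whose middle map is induced by the terminal $GX$-diagram, and whose right map is the natural weak equivalence of \ref{thm5.6}. This defines a functor $\textbf{Tors}_{GX} \rightarrow h(*, \bar{W}(GX))_{s\textbf{Pre}(\mathscr{C})}$. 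The classical homotopy classification of torsors (\cite{JardineTorsors}, \cite[Chapter 9]{local}) shows that the induced map on path components is a bijection, and combining this with the cocycle theorem \ref{thm5.2} — applied to the injective model structure, which is right proper and whose weak equivalences are closed under finite products — yields
$$
\pi_{0}(\textbf{Tors}_{GX}) \cong \pi_{0} h(*, \bar{W}(GX))_{s\textbf{Pre}(\mathscr{C})} \cong [*, \bar{W}(GX)]_{s\textbf{Pre}(\mathscr{C})}.
$$

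Next I would transport this across the Quillen equivalence $\mathcal{G} \dashv \bar{W}$ of \ref{thm5.5}. Every object is cofibrant in the injective structure, and $\mathcal{G}(*) \cong *$ is the terminal presheaf of simplicial groupoids, so the derived adjunction gives $[*, \bar{W}(GX)]_{s\textbf{Pre}(\mathscr{C})} \cong [*, GX]_{sGpd\textbf{Pre}(\mathscr{C})}$ (after replacing $GX$ by a fibrant model for the structure of \ref{thm5.4}, which alters neither side). Theorem \ref{thm5.10} then identifies $[*, GX]_{sGpd\textbf{Pre}(\mathscr{C})} \cong [*, GX]_{sCat\textbf{Pre}(\mathscr{C})}$.

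The final step, which I expect to be the main obstacle, is to show that the unit $X \rightarrow GX$ is a local sCat-equivalence, so that it induces the bijection $[*, GX]_{sCat\textbf{Pre}(\mathscr{C})} \cong [*, X]_{sCat\textbf{Pre}(\mathscr{C})}$; this is precisely where the hypothesis that $\pi_{0}(X)$ is a presheaf of groupoids enters. The difficulty is that \ref{lem5.9} is stated for $X$ sectionwise fibrant and projective cofibrant, whereas here $X$ is only sectionwise cofibrant. I would circumvent this by the method of \ref{lem5.9}: by \ref{lem3.7} it suffices to show that $p^{*}L^{2}(X \rightarrow GX)$ is a local sCat-equivalence; writing $p^{*}L^{2}(X) \cong L^{2}(X')$ with $X'$ projective cofibrant (\ref{lem4.3}), one applies the simplicial resolution $\mathrm{DK}$ to $X'$ to obtain a free model, invokes the Dwyer--Kan comparison \cite[Proposition 9.5]{localization}, and identifies the sheafification of $X' \rightarrow G(X')$ with $p^{*}L^{2}(X \rightarrow GX)$, exactly as in the proof of \ref{lem5.9}. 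The hypothesis of \cite[Proposition 9.5]{localization} that $\pi_{0}$ be a presheaf of groupoids is now available directly from our assumption, rather than deduced from an auxiliary equivalence, and the sectionwise cofibrancy of $X$ is exactly what guarantees that the sectionwise groupoid completion $G$ computes the correct derived homotopy type. Composing the displayed bijections, and checking that the resulting map is the evident natural one, completes the proof.
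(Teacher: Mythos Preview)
Your chain of bijections is exactly the one the paper uses: pass from $[*,X]$ to $[*,GX]$ in the local Bergner model structure, then to $[*,GX]$ in the local simplicial-groupoid model structure via \ref{thm5.10}, then to $[*,\bar{W}GX]$ (equivalently $[*,dBGX]$) in the injective model structure via \ref{thm5.5} and \ref{thm5.6}, and finally to $\pi_{0}(\textbf{Tors}_{GX})$ via \cite{JardineTorsors}. So the architecture is right.

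The gap is in your final step. You invoke \ref{lem4.3} to write $p^{*}L^{2}(X)\cong L^{2}(X')$ with $X'$ projective cofibrant, but \ref{lem4.3} only produces such a description when the input map is already a projective cofibration; here $\emptyset\to X$ is merely a sectionwise Bergner cofibration, which is a strictly weaker condition. Without that step your Boolean-localization argument does not get off the ground, and the rest of the paragraph (the $\mathrm{DK}$ resolution, the appeal to \cite[Proposition 9.5]{localization}, and the sheafification comparison) is left unsupported.

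The paper's route here is much shorter and uses the hypothesis of sectionwise cofibrancy exactly once, directly: for each $U$ the simplicial category $X(U)$ is Bergner-cofibrant and $\pi_{0}(X(U))$ is a groupoid, so \cite[Proposition 9.5]{localization} gives that $X(U)\to G(X(U))$ is an sCat-equivalence. Thus $X\to GX$ is a \emph{sectionwise} sCat-equivalence, hence a local one by \ref{rmk2.7}. No Boolean localization, no \ref{lem4.3}, no $\mathrm{DK}$ is needed. Your own closing sentence (``the sectionwise cofibrancy of $X$ is exactly what guarantees that the sectionwise groupoid completion $G$ computes the correct derived homotopy type'') is precisely this observation; you should simply follow it and drop the detour through \ref{lem5.9}.
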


\begin{proof}
We have bijections 
$$
[*, X]_{sCat\textbf{Pre}(\mathscr{C})} = [*,GX]_{sCat\textbf{Pre}(\mathscr{C})} = [*, GX]_{sGpd\textbf{Pre}(\mathscr{C})} = [*, \bar{W}GX]_{s\textbf{Pre}(\mathscr{C})}= [*, dBGX]_{s\textbf{Pre}(\mathscr{C})},
$$
where $[\, , \, ]_{s\textbf{Pre}(\mathscr{C})}$ denotes homotopy classes of maps in the injective model structure. The first bijection follows from \cite[Proposition 9.5]{localization} and the fact that $\pi_{0}X$ is a presheaf of groupoids. The second and third follow from \ref{thm5.10} and \ref{thm5.5}, respectively. The final one comes from \ref{thm5.6}.

On the other hand, \cite[Theorem 24]{JardineTorsors} gives a bijection
$$
[*, dB G X]_{s\textbf{Pre}(\mathscr{C})} = \pi_{0}(\textbf{Tors}_{G X}) .
$$
\end{proof}

\begin{corollary}\label{cor5.12}
Suppose that $X$ is a presheaf of simplicial categories such that $\pi_{0}(X)$ is a presheaf of groupoids. Then there is a bijection
$$
\pi_{0}(\textbf{Tors}_{G\mathrm{DK}(X)})  \rightarrow [*, X]_{sCat\textbf{Pre}(\mathscr{C})}.
$$
\end{corollary}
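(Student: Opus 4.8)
The plan is to reduce Corollary \ref{cor5.12} to Theorem \ref{thm5.11} by replacing the arbitrary presheaf of simplicial categories $X$ with a sectionwise cofibrant model, namely $\mathrm{DK}(X)$. The key observation is that the Dwyer--Kan resolution $\mathrm{DK}(X) \rightarrow X$ is a sectionwise cofibrant replacement: by the discussion following the definition of $\mathrm{DK}$, each $\mathrm{DK}(X)(U) \rightarrow X(U)$ is a cofibrant replacement in the Dwyer--Kan (hence Bergner) model structure, so $\mathrm{DK}(X)$ is a presheaf of cofibrant simplicial categories and the natural map is a sectionwise sCat-equivalence.

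The first step is to verify the two hypotheses of Theorem \ref{thm5.11} for $\mathrm{DK}(X)$. Sectionwise cofibrancy holds by the previous paragraph. For the groupoid condition, I would note that $\mathrm{DK}(X) \rightarrow X$ is a sectionwise, hence local, sCat-equivalence by \ref{rmk2.7}; since local sCat-equivalences induce local equivalences of the presheaves of categories $\pi_{0}$, and since being a presheaf of groupoids is detected after sheafification (which is preserved by $p^{*}L^{2}$), the hypothesis that $\pi_{0}(X)$ is a presheaf of groupoids transfers to $\pi_{0}(\mathrm{DK}(X))$. Concretely, $\pi_{0}(\mathrm{DK}(X)) \rightarrow \pi_{0}(X)$ is a sectionwise equivalence of presheaves of categories, so $\pi_{0}(\mathrm{DK}(X))$ is a presheaf of groupoids as well.

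The second step applies Theorem \ref{thm5.11} to $\mathrm{DK}(X)$, yielding a bijection
$$
\pi_{0}(\textbf{Tors}_{G\mathrm{DK}(X)}) \rightarrow [*, \mathrm{DK}(X)]_{sCat\textbf{Pre}(\mathscr{C})}.
$$
The final step is to identify the right-hand side with $[*, X]_{sCat\textbf{Pre}(\mathscr{C})}$. This is immediate because $\mathrm{DK}(X) \rightarrow X$ is a weak equivalence in the local Bergner model structure (being a sectionwise, hence local, sCat-equivalence), so it induces a bijection on morphisms out of any object in the homotopy category. Composing the two bijections gives the desired statement.

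I expect the main obstacle to be the careful justification that the groupoid hypothesis passes from $X$ to $\mathrm{DK}(X)$, since $\pi_{0}$ of a simplicial category is a slightly delicate construction and one must be sure the natural map $\mathrm{DK}(X) \rightarrow X$ interacts correctly with it. However, because the resolution map is a sectionwise sCat-equivalence, it induces a genuine equivalence on each $\pi_{0}$ section, so the property of being a presheaf of groupoids is preserved without needing to pass through Boolean localization at all. The remainder of the argument is purely formal manipulation of homotopy categories.
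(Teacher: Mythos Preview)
Your proposal is correct and matches the paper's intended argument: the paper states Corollary \ref{cor5.12} immediately after Theorem \ref{thm5.11} with no proof, so the derivation is meant to be exactly the replacement $X \mapsto \mathrm{DK}(X)$ you describe. Your verification that $\mathrm{DK}(X)$ is sectionwise cofibrant, that $\pi_{0}(\mathrm{DK}(X))$ inherits the groupoid property via the sectionwise equivalence $\pi_{0}(\mathrm{DK}(X)) \to \pi_{0}(X)$, and that the sectionwise sCat-equivalence $\mathrm{DK}(X) \to X$ identifies the two homotopy classes, is precisely what is needed.
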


\begin{corollary}\label{cor5.13}
Suppose that $X$ is a presheaf of Kan complexes. Then we have a bijection
$$
\pi_{0}(\textbf{Tors}_{G  \mathfrak{C}(X)})  \rightarrow [*, X]_{s\textbf{Pre}(\mathscr{C})},
$$
where $[\, , \, ]_{s\textbf{Pre}(\mathscr{C})} $ denotes homotopy classes of maps in the injective model structure.
\end{corollary}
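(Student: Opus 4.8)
The plan is to reduce Corollary \ref{cor5.13} to the already-established Theorem \ref{thm5.11} by transporting the statement along the Quillen equivalence $\mathfrak{C} : s\textbf{Pre}(\mathscr{C}) \leftrightarrows sCat\textbf{Pre}(\mathscr{C}) : \mathfrak{B}$ of Theorem \ref{thm4.15}. The key input to verify is that the simplicial category $\mathfrak{C}(X)$ satisfies the two hypotheses of Theorem \ref{thm5.11}, namely that it is a sectionwise cofibrant presheaf of simplicial categories and that $\pi_{0}\mathfrak{C}(X)$ is a presheaf of groupoids. Once those are in hand, Theorem \ref{thm5.11} immediately supplies a bijection $\pi_{0}(\textbf{Tors}_{G\mathfrak{C}(X)}) \rightarrow [*, \mathfrak{C}(X)]_{sCat\textbf{Pre}(\mathscr{C})}$, and it remains to identify the right-hand side with $[*, X]_{s\textbf{Pre}(\mathscr{C})}$.

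First I would check the groupoid condition, which is exactly where the Kan hypothesis on $X$ gets used. By Lemma \ref{weirdlem} we have a sectionwise isomorphism $\pi_{0}\mathfrak{C}(X) \cong P(X)$, so $\pi_{0}\mathfrak{C}(X)$ is a presheaf of groupoids precisely when $P(X(U))$ is a groupoid for every $U$. Since each $X(U)$ is a Kan complex, it is in particular a quasi-category, and Theorem \ref{thm4.10} (the main theorem of \cite{Joyal1}) tells us that a quasi-category $Y$ is a Kan complex if and only if $P(Y)$ is a groupoid. Applying this sectionwise yields that $P(X)$, and hence $\pi_{0}\mathfrak{C}(X)$, is a presheaf of groupoids. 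For the sectionwise cofibrancy of $\mathfrak{C}(X)$, I would appeal to the fact that $\mathfrak{C}$ is the left adjoint of the Quillen equivalence of Theorem \ref{thm1.5}: since every simplicial set is cofibrant in the Joyal model structure (the cofibrations are the monomorphisms), $\mathfrak{C}$ carries each $X(U)$ to a cofibrant object of $\mathrm{sCat}$ in the Bergner model structure, so $\mathfrak{C}(X)$ is sectionwise cofibrant.

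With both hypotheses verified, Theorem \ref{thm5.11} applied to $\mathfrak{C}(X)$ gives a bijection
$$
\pi_{0}(\textbf{Tors}_{G\mathfrak{C}(X)}) \rightarrow [*, \mathfrak{C}(X)]_{sCat\textbf{Pre}(\mathscr{C})}.
$$
To finish I would identify $[*, \mathfrak{C}(X)]_{sCat\textbf{Pre}(\mathscr{C})}$ with $[*, X]_{s\textbf{Pre}(\mathscr{C})}$. Here I use Theorem \ref{thm4.15}: the derived adjunction induces a bijection $[*, X]$ in the local projective Joyal homotopy category with $[\mathfrak{C}(*), \mathfrak{C}(X)] = [*, \mathfrak{C}(X)]_{sCat\textbf{Pre}(\mathscr{C})}$, using that $\mathfrak{C}$ preserves the terminal object $*$ (as $\mathfrak{C}(\Delta^{0})$ is the one-point simplicial category) and that $X$ is already fibrant on the simplicial side since it is a presheaf of Kan complexes. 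Finally, Lemma \ref{lem4.14} identifies the homotopy category of the local projective Joyal model structure with that of the usual local Joyal model structure, and one further compares this with the injective model structure; the Kan-complex hypothesis ensures $X$ is fibrant so that these hom-sets agree with $[*, X]_{s\textbf{Pre}(\mathscr{C})}$ in the injective structure.

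The main obstacle I anticipate is the bookkeeping in this last identification step: making sure the terminal object and the object $X$ are simultaneously cofibrant and fibrant in the appropriate model structures so that the derived mapping sets computed across three different Quillen equivalences (Theorems \ref{thm4.15}, \ref{thm5.5}, and Lemma \ref{lem4.14}) genuinely agree. In particular, one must confirm that passing between the local projective Joyal and injective model structures does not alter $[*, X]$, which relies on $X$ being a presheaf of Kan complexes so that it represents the same homotopy type in each structure. The torsor side and the groupoid-completion functor $G$ require no new work, since they are inherited verbatim from Theorem \ref{thm5.11} once the hypotheses on $\mathfrak{C}(X)$ are established.
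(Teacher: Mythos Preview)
Your proposal is correct and follows essentially the same route as the paper: verify the two hypotheses of Theorem~\ref{thm5.11} for $\mathfrak{C}(X)$ via Lemma~\ref{weirdlem}, Theorem~\ref{thm4.10}, and Theorem~\ref{thm1.5}, and then transport $[*,\mathfrak{C}(X)]_{sCat\textbf{Pre}(\mathscr{C})}$ back to $[*,X]_{s\textbf{Pre}(\mathscr{C})}$ through the Quillen equivalences of Lemma~\ref{lem4.14} and Theorem~\ref{thm4.15}. The only place where the paper is sharper is the last comparison between the local Joyal and injective homotopy categories, which it handles by citing \cite[Theorem~4.10]{Nick2} rather than an ad hoc fibrancy argument; note that a presheaf of Kan complexes need not be fibrant in the local (projective) Joyal structure, so your phrasing there should be tightened to invoke that result.
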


\begin{proof}

First, note that by \cite[Theorem 4.10]{Nick2} and the fact that $X$ is a presheaf of Kan complexes, we have a bijection
$$
[*, X]_{inj} = [*, X]_{LJoyal}
$$
where $[*, X]_{inj}$ and $[\, , \, ]_{LJoyal}$ denotes maps in the homotopy categories of the injective and local Joyal model structures, respectively. The Quillen equivalences of \ref{lem4.14} and \ref{thm4.15} imply that there are bijections 
$$
[*, X]_{LJoyal}  = [*, \mathfrak{C}(X)]_{LBerg}
$$
between maps in the homotopy categories of the local Joyal and local Bergner model structures, since Quillen equivalences induce equivalences of homotopy categories. 

Since everything in the Joyal model structure is cofibrant, $\mathfrak{C}(X)$ is sectionwise cofibrant presheaf by the Quillen equivalence of \ref{thm1.5}. By \ref{weirdlem}, $\pi_{0}\mathfrak{C}(X) \cong P(X)$. But $P(X)$ is a groupoid by \ref{thm4.10}. Thus, $\mathfrak{C}(X)$ satisfies the hypotheses of \ref{thm5.11} and we have an identification
$$
[*, \mathfrak{C}(X)]_{sCat\textbf{Pre}(\mathscr{C})} = \pi_{0}(\textbf{Tors}_{G\mathfrak{C}X}),
$$
from which the result follows. 
\end{proof}

\begin{remark}\label{rmk5.14}
\normalfont
In \cite[Theorem 2.3]{Riehl1} an explicit description of $\mathfrak{C}(X)$ for a quasi-category $X$ is given that may prove particularly useful for calculations. From this description, a number of interesting properties of $\mathfrak{C}(X)$ are deduced, such as the fact that its simplicial homs are 3-coskeletal (\cite[Theorem 4.1]{Riehl1}).  
\end{remark}

\bibliography{cocyclesbib.bib}

\end{document}